\numberwithin{equation}{section}
\newtheorem{Theorem}{Theorem}[section]
\newtheorem*{Theorem*}{Theorem}
\newtheorem{Corollary}[Theorem]{Corollary}
\newtheorem{Lemma}[Theorem]{Lemma}
\newtheorem{Proposition}[Theorem]{Proposition}
 { \theoremstyle{definition}
\newtheorem{Definition}[Theorem]{Definition}

\newtheorem{Notation}[Theorem]{Notation}
\newtheorem{Example}[Theorem]{Example}
\newtheorem{Examples}[Theorem]{Examples}
\newtheorem{Remark}[Theorem]{Remark}
\newtheorem{Remarks}[Theorem]{Remarks}
\newtheorem{Convention}[Theorem]{Convention}
}
\newcommand\bC{{\mathbb C}}
\newcommand\bH{{\mathbb H}}
\newcommand\bP{{\mathbb P}}
\newcommand\bR{{\mathbb R}}
\newcommand\bS{{\mathbb S}}
\newcommand\bZ{{\mathbb Z}}
\newcommand\cE{{\mathcal E}}
\newcommand\cU{{\mathcal U}}
\newcommand\cV{{\mathcal V}}
\newcommand\fg{{\mathfrak g}}
\newcommand\fk{{\mathfrak k}}
\newcommand\fu{{\mathfrak u}}
\newcommand{\cat}[1]{\text{\sc #1}}
\DeclareMathOperator{\id}{id}
\DeclareMathOperator{\Hom}{\mathrm{Hom}}
\DeclareMathOperator{\im}{\mathrm{im}}
\begin{document}
\allowdisplaybreaks

\newcommand{\arXivNumber}{2211.11429}

\renewcommand{\PaperNumber}{106}

\FirstPageHeading

\ShortArticleName{Manifolds of Lie-Group-Valued Cocycles and Discrete Cohomology}

\ArticleName{Manifolds of Lie-Group-Valued Cocycles\\ and Discrete Cohomology}

\Author{Alexandru CHIRVASITU and Jun PENG}

\AuthorNameForHeading{A.~Chirvasitu and J.~Peng}

\Address{Department of Mathematics, University at Buffalo, Buffalo, NY 14260-2900, USA}
\Email{\href{mailto:achirvas@buffalo.edu}{achirvas@buffalo.edu}, \href{mailto:jpeng3@buffalo.edu}{jpeng3@buffalo.edu}}

\ArticleDates{Received June 18, 2023, in final form December 01, 2023; Published online December 24, 2023}

\Abstract{Consider a compact group $G$ acting on a real or complex Banach Lie group~$U$, by automorphisms in the relevant category, and leaving a central subgroup $K\le U$ invariant. We define the spaces ${}_KZ^n(G,U)$ of $K$-relative continuous cocycles as those maps~${G^n\to U}$ whose coboundary is a $K$-valued $(n+1)$-cocycle; this applies to possibly non-abelian $U$, in which case $n=1$. We show that the ${}_KZ^n(G,U)$ are analytic submanifolds of the spaces~$C(G^n,U)$ of continuous maps $G^n\to U$ and that they decompose as disjoint unions of fiber bundles over manifolds of $K$-valued cocycles. Applications include: (a) the fact that~${Z^n(G,U)\subset C(G^n,U)}$ is an analytic submanifold and its orbits under the adjoint of the group of $U$-valued $(n-1)$-cochains are open; (b) hence the cohomology spaces $H^n(G,U)$ are discrete; (c) for unital $C^*$-algebras $A$ and $B$ with $A$ finite-dimensional the space of morphisms $A\to B$ is an analytic manifold and nearby morphisms are conjugate under the unitary group~$U(B)$; (d) the same goes for $A$ and $B$ Banach, with $A$ finite-dimensional and semisimple; (e) and for spaces of projective representations of compact groups in arbitrary~$C^*$ algebras (the last recovering a result of Martin's).}

\Keywords{Banach Lie group; Lie algebra; group cohomology; cocycle; coboundary; infinite-dimensional manifold; immersion; analytic; $C^*$-algebra; unitary group; Banach algebra; se\-mi\-simple; Jacobson radical}

\Classification{22E65; 17B65; 58B25; 22E41; 57N35; 46L05; 16H05; 16D60; 16K20}

\section{Introduction}

The results below all revolve around the familiar pattern of rigidity in the presence of semisimplicity. Both terms are ambiguous and multiform, but roughly speaking:
\begin{itemize}\itemsep=0pt
\item `rigidity' means that ``nearby'' structures (morphisms, derivations, cocycles, etc.) are mutually equivalent, whatever that might mean;
\item while `semisimplicity' is a stand-in for any of a number of behaviors having to do with cohomology vanishing of some type: semisimplicity in the representation-theoretic sense~\cite[Definition~2.1]{lam} for Lie or associative algebras or for compact groups (which amounts to the vanishing of $\mathrm{Ext}$ functors), {\it separability} \cite[Section~10.2, Definition]{pierce} for associative algebras (i.e., Hochschild-cohomology triviality \cite[Section~11.5, Proposition]{pierce}), etc.
\end{itemize}

By way of an illustration, and to give as good an entry point to the discussion as any other, consider \cite[Proposition 5.2.6]{wo}: given a projection $p$ of a unital $C^*$-algebra~$B$, all projections~$q$ sufficiently close to it can be conjugated back onto $p$ by means of unitary elements $u\in B$ chosen continuously in $q$. This is the type of rigidity result alluded to above: projections in~$B$ are unital $C^*$-algebra morphisms $\bC^2\to B$ (see Remark \ref{re:wo}), and the recorded close-hence-conjugate phenomenon turns out to be a consequence of the semisimplicity of $\bC^2$ (or rather its separability).

The natural next step is to ask whether the analogue holds for morphisms $A\to B$ for arbitrary finite-dimensional $C^*$-algebras $A$. Up to minor adjustments the reader can easily supplement (see Remark \ref{res:wogen}\,(\ref{item:acs})), this is essentially the content of \cite[Proposition~2.2]{acs}. The many variations on the theme, in differing contexts but bearing the unmistakable family resemblance, suggest a~broader phenomenon. To wit:
\begin{enumerate}[(i)]\itemsep=0pt
\item\label{item:8} There are analogues for Banach (as opposed to $C^*$-)algebras (\cite[Proposition 8.2.2 and Theorem 8.2.3]{run-lec}, citing \cite[Proposition~4.1 and Theorem~4.5]{cg-amen}).
 If $A$ is an {\it amenable} Banach algebra (\cite[Definition 2.1.9]{run-lec}: a cohomology-vanishing condition) and $B$ is sufficiently well-behaved, the subspace
 \begin{equation*}
 \cat{BAlg}(A,B)\subset \cat{Ban}(A,B):=\text{all Banach-space morphisms}
 \end{equation*}
 is an analytic submanifold, and the action of the multiplicative group $B^{\times}$ of invertible elements splits locally. The latter condition means that, as before, sufficiently close morphisms are mutual conjugates.

\item\label{item:9} Virtually all of goes through in the slightly different setup of {\it projective representations} of compact groups \cite[Propositions~1.7, 2.2 and~2.3]{mart-proj}.
 Given a 2-cocycle $\sigma$ valued in $\bS^1$ for a compact group $G$ (compactness, again, being an avatar of semisimplicity), the space of projective $\sigma$-representations \cite[Section~1.4]{mart-proj} $G\to U(B)$ into the unitary group of a~$C^*$-algebra $B$ is an analytic submanifold of
 \begin{equation*}
 C(G,B):=\text{continuous functions }G\to B,
 \end{equation*}
 sufficiently close representations are mutual unitary conjugates, etc.

\item\label{item:10} The (possibly non-abelian) cohomology of a finite group valued in a compact Lie group is finite: see, e.g., \cite[Part~II, Chapter~IV, Exercise~1]{ser-lag}, which suggests using the manifold structure on the space of cocycles, or \cite[Theorems~1.1 and~1.2 and Corollary~1.1]{aw-nonab}, which proceed essentially in this manner.
 So once more: cocycles form a manifold, and sufficiently close cocycles are equivalent (here meaning cohomologous).
\end{enumerate}
The selection presumably suffices to confirm that the theme recurs.

To aggregate, summarize and paraphrase some of the results, we employ some of the standard notation familiar from the literature on (group, say) cohomology: $X^{n}(G,U)$ denotes $U$-valued $n$-cochains, cocycles, coboundaries and cohomology, depending on whether $X = C,Z,B$ or $H$ respectively; see Section~\ref{subse:coh} for a quick reminder.

The general setup entails fixing a short exact sequence $\cE$ of Banach Lie groups (real or complex) as in~\eqref{eq:gpext}, with $K\le U$ central and a Lie subgroup in the strong sense of \cite[Section~III.1.3, Definition~3]{bourb-lie-13}; closed, and such that the tangent subspaces to $K$ are {\it complemented} Banach subspaces of the tangent spaces to $U$.

Additionally, we have a compact group $G$ (not necessarily Lie), acting on all of $\cE$ in the sense that it leaves $K$ invariant. There will be a cocycle/cohomological degree $n$ in use at all times, with $n=1$ if $U$ is non-abelian.

In Definition \ref{def:relcocyc} and Notation \ref{not:relsigma} we introduce the spaces ${}_{\cE}Z^n(G,U)$ of {\it $\cE$-relative} cocycles, i.e., those continuous maps $G^n\to U$ whose coboundaries are $K$-valued $(n+1)$-cocycles. Similarly,
\begin{itemize}\itemsep=0pt
\item for a specific cocycle $\sigma\in Z^{n+1}(G,K)$, the space ${}_{\sigma}Z^n(G,U)$ consists of those continuous maps $G^n\to U$ whose coboundary is precisely $\sigma$;
\item and more generally, for a set $S\subset Z^{n+1}(G,K)$ of cocycles, ${}_{S}Z^n(G,U)$ consists of those maps $G^n\to U$ whose coboundary lies in $S$.
\end{itemize}

A selection of statements spread over Theorems \ref{th:cocyle-man-1}, \ref{th:cocyle-man-n}, \ref{th:relcocfiber}, \ref{th:relcocfiber-tg} and \ref{th:canconjrel}:

\begin{Theorem*}
 In the setting outlined above, the following statements hold.
 \begin{enumerate}[$(a)$]\itemsep=0pt

 \item The space ${}_{\cE}Z^n(G,U)$ of relative cocycles is a closed submanifold of the space $C(G^n,U)$ of continuous maps $G^n\to U$, decomposing as a disjoint union of open subsets ${}_S Z^n(G,U)$ for~$S$ ranging over the cohomology classes in $Z^n(G,K)$.

 \item\label{item:12} For every cohomology class $S\subset Z^n(G,K)$, if non-empty, the space ${}_S Z^{n}(G,U)$ fibers over~$S$. That fibration is associated to the principal $Z^n(G,K)$-bundle
 \begin{align*}
 C^n(G,K) \xrightarrow{\quad}
 C^n(G,K)/Z^n(G,K) \xrightarrow[\quad\cong\quad]{}
 B^{n+1}(G,K) &\cong B^{n+1}(G,K)\cdot \sigma=S
 \end{align*}
 $($for any fixed $\sigma\in S)$, resulting from the $Z^n(G,K)$-action on ${}_SZ^n(G,U)$ by multiplication.

 \item For a cocycle $\sigma\in Z^{n+1}(G,K)$ and $u\in {}_{\sigma}Z^n(G,U)$ we have
 \begin{equation*}
 T_u \left({}_{\sigma}Z^n(G,U)\right)
 \cong
 Z^n(G,\fu),
 \end{equation*}
 the space of cocycles valued in the Lie algebra $\fu:=\operatorname{Lie}(U)$ where, if $U$ is non-abelian and hence $n=1$, the action of $G$ on $\fu$ is the original one twisted by the cocycle $u$.

 \item The Lie group $C^{n-1}(G,U)\times C^n(G,K)$ admits action on ${}_S Z^n(G,U)$, whereby the first factor operates by a higher analogue of conjugation $($see Notation $\ref{not:actpart})$ and the second factor by multiplication.

 That action has local cross sections, so in particular its orbits are open.

 \item\label{item:13} The same goes for the action of $C^{n-1}(G,U)$ on single fibers ${}_{\sigma}Z^n(G,U)$, where $\sigma\!\!\in\!\! Z^{n+1}(G,K)$ is a cocycle.
 \end{enumerate}
\end{Theorem*}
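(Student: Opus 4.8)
The plan is to realize the orbit map $\Phi\colon C^{n-1}(G,U)\to {}_\sigma Z^n(G,U)$, $g\mapsto g\triangleright u$, as an analytic submersion admitting local cross sections, from which openness of the orbits is immediate. The first thing to check is that higher conjugation actually preserves the fiber: for $n=1$ a direct computation gives $d(g\triangleright u)=g\,(du)\,g^{-1}=du=\sigma$, the last step because $\sigma$ is $K$-valued and $K$ is central in $U$; for abelian $U$ in arbitrary degree the action is multiplication by the coboundary $dg$, which preserves $du$ at once since $d^2=0$. I should note that merely restricting the larger action of part~$(d)$ will not do the job, since its fiber-preserving part is $C^{n-1}(G,U)\times Z^n(G,K)$ rather than $C^{n-1}(G,U)$ alone; isolating the smaller orbits is exactly what the cohomological input below accomplishes.

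Next I would differentiate $\Phi$ at the identity. Higher conjugation differentiates to the coboundary map of the complex computing $H^\bullet(G,\fu)$—in the non-abelian case $n=1$ the one twisted by $u$, matching the twist recorded in part~$(c)$. Thus $d\Phi_e=\partial\colon C^{n-1}(G,\fu)\to Z^n(G,\fu)$ has image the coboundaries $B^n(G,\fu)$, while by part~$(c)$ the codomain $Z^n(G,\fu)$ is precisely $T_u\bigl({}_\sigma Z^n(G,U)\bigr)$. Consequently the tangent space to the orbit is $B^n(G,\fu)$, and surjectivity of $d\Phi_e$ is equivalent to the vanishing $H^n(G,\fu)=0$.

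Here compactness of $G$ enters decisively: averaging against normalized Haar measure furnishes a bounded contracting homotopy $h_\bullet$ on $C^\bullet(G,\fu)$, so that $H^n(G,\fu)=0$ for $n\ge 1$. Restricting the homotopy identity $d\,h_n+h_{n+1}\,d=\mathrm{id}$ to cocycles (where $h_{n+1}d=0$) shows that $s:=h_n|_{Z^n(G,\fu)}$ is a bounded linear section of $\partial$. Hence $d\Phi_e$ is a split surjection, and the composite $\Phi\circ\exp\circ s\colon Z^n(G,\fu)\to {}_\sigma Z^n(G,U)$ has derivative $\mathrm{id}$ at the origin. The analytic inverse function theorem on Banach manifolds then yields a local analytic section $\tau$ of $\Phi$ near $u$ with $\tau(v)\triangleright u=v$; in the abelian case one may take $\tau(v)=\exp\bigl(h_n\zeta\bigr)$, where $\zeta\in Z^n(G,\fu)$ is the logarithmic coordinate of $v$ about $u$.

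The main obstacle is precisely this passage from infinitesimal surjectivity to an honest local cross section. The stabilizer of $u$ is large—the kernel of $d\Phi_e$ is $Z^{n-1}(G,\fu)$—so $\Phi$ cannot be inverted directly, and the Haar-averaged homotopy is needed both to split off that kernel continuously and to supply the section $s$ against which the inverse function theorem is applied. A secondary technical point is to carry the $u$-twist coherently through the complex $C^\bullet(G,\fu)$, the homotopy $h_\bullet$, and the tangent-space identification of part~$(c)$ in the degree $n=1$ case, so that it is the twisted vanishing $H^1_u(G,\fu)=0$ that is actually invoked.
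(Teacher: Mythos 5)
Your proposal covers only a fraction of the statement, and it assumes the hardest parts as inputs. The theorem being proved is the aggregate of claims (a)--(e), and your argument addresses only (e) (plus the openness assertions): you explicitly invoke part~(c) to identify the codomain of $d\Phi_e$ with $T_u\bigl({}_\sigma Z^n(G,U)\bigr)\cong Z^n(G,\fu)$, and your application of the inverse function theorem on ${}_\sigma Z^n(G,U)$ presupposes that this space is already an analytic Banach manifold --- which is exactly what parts (a) and (b) assert. Those parts are the technical core of the paper: the manifold structure and tangent-space computation come from exponential coordinates, the BCH estimate, and the averaging splitting (proof of Theorem~\ref{th:cocyle-man-1}\,(1),(2), rerun for relative cocycles in Theorem~\ref{th:relcocfiber}); the decomposition of ${}_{\cE}Z^n(G,U)$ into open pieces rests on the discreteness of $H^{n+1}(G,K)$ (Theorem~\ref{th:cocyle-man-n}\,(3), used in Theorem~\ref{th:relcocfiber}\,(1)); and the fibration of (b) is built as an associated bundle in Theorem~\ref{th:relcocfiber}\,(2). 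None of this is in your proposal. Part~(d) is likewise unproven: you only remark on how its orbits relate to (e)'s, whereas the paper's Theorem~\ref{th:canconjrel} has to stitch together the section in the $C^n(G,K)$-direction (Theorem~\ref{th:cocyle-man-n}\,(4)) with the fiberwise conjugation section.

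Within its limited scope, your argument for (e) is correct and essentially coincides with the paper's: the differential of the orbit map is the (twisted) coboundary $C^{n-1}(G,\fu)\to Z^n(G,\fu)$; Haar averaging (the paper's Lemma~\ref{le:cansplit}) makes it a split surjection onto $Z^n(G,\fu)=B^n(G,\fu)$; and the submersion property then yields local analytic sections and open orbits (cf.\ the proof of Theorem~\ref{th:cocyle-man-1}\,(3),(4) and Corollary~\ref{cor:orbisfibrel}). Your fiber-preservation check and the homotopy-identity splitting are both fine. So the gap is not in (e) itself but in everything upstream of it: to make your argument a proof of the stated theorem you would need an independent construction of charts on ${}_\sigma Z^n(G,U)$ --- showing that near $u$ the relative cocycles with coboundary $\sigma$ are parametrized analytically by a neighborhood of $0$ in $Z^n(G,\fu)$, with complemented tangent space --- before the tangent-space identification you lean on even makes sense.
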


The various results listed above as motivation can be recovered in this framework:
\begin{itemize}\itemsep=0pt
\item Manifolds of morphisms $A\to B$ of operator algebras, whether $C^*$ or Banach, with $A$ finite-dimensional and, in the Banach case, semisimple, are treated in Theorem \ref{th:tgsp}.

 This is a version of item (\ref{item:8}) above, except that $A$ is finite-dimensional (rather than amenable), while $B$ is arbitrary (rather than {\it dual} \cite[Theorem~8.2.3]{run-lec}).

\item Example \ref{exs:motivk}\,(\ref{item:motivk-martin}) and Remark \ref{re:martinsame} explain how the results of \cite{mart-proj} (mentioned in (\ref{item:9})) can be recast as instances of the present discussion: the manifolds $R(G,\sigma,A)$ of projective representations of a compact group $G$ in a $C^*$-algebra $A$, discussed in \cite[Section~1.4]{mart-proj}, are the fibers ${}_{\sigma}Z^1(G,U)$ of the fibration in part (\ref{item:12}) of the statement given above.

 In other words, the theorem stated here aggregates the various spaces $R(G,\sigma,A)$ studied in \cite{mart-proj} into a single manifold.

\item Finally, how (\ref{item:10}) relates to the contents of the theorem should be clear: for trivial $K\le U$, the statements revert back to plain (as opposed to relative) cocycles, the orbits mentioned in part (\ref{item:13}) are precisely the cohomology classes, etc.
\end{itemize}

\section{Preliminaries}\label{se.prel}

Apart from explicit warnings to the contrary, Banach algebras are assumed unital, as are morphisms. Per standard practice (e.g., \cite[Definition~2.1.8]{dales}), we assume throughout that in a~unital Banach algebra the multiplicative unit has norm~1: $\|1\|=1$.

Some assorted notation:
\begin{itemize}\itemsep=0pt
\item $\cat{Ban}$ is the category of Banach spaces and continuous linear maps, and $\cat{Ban}(E,F)$ or $L(E,F)$ is the space of continuous linear morphisms between two Banach spaces.

 Banach spaces (and algebras) are either real or complex: the discussion can be carried out in parallel, so the discussion can be kept agnostic of the ground field.

\item $\cat{BAlg}$ is the category of (unital) Banach algebras and continuous unital algebra morphisms (all of them, not just those of norm $\le 1$; see Remark \ref{re:notcontr}).

 The space of morphisms $A\to B$ is denoted by $\cat{BAlg}(A,B)$ is the space of unital-Banach-algebra morphisms $A\to B$: continuous, linear, unital, multiplicative maps.
\item Similarly, $C^*(A,B)$ is the space of unital $C^*$-algebra morphisms between unital $C^*$-al\-ge\-bras $A$ and $B$.
\item $A^{\times}$ is the multiplicative group of invertible elements in a Banach algebra $A$.
\item $U(A)$ is the unitary group of a $C^*$-algebra $A$.
\end{itemize}

\begin{Remark}\label{re:notcontr}
 While morphisms of $C^*$-algebras automatically have norm $1$ (as follows, for instance, from \cite[Proposition 1.6.2]{arv}), this is not the case, in general, for Banach algebras: the algebra $\bC[\varepsilon]/\big(\varepsilon^2\big)$ of {\it dual numbers}, equipped with an arbitrary Banach-algebra norm. Its endomorphisms are in bijection with $\bC$, determined uniquely by how they scale $\varepsilon$.

 It is not uncommon, in the literature, to work with the category $\cat{Ban}_1$ of Banach spaces and {\it short} linear maps, i.e., those of norm at most $1$: see \cite[Example 1.48]{ar}, \cite{poth}, etc. In many ways, $\cat{Ban}_1$ is better behaved than the larger category $\cat{Ban}$ of Banach spaces and arbitrary linear continuous maps. In that context one would then consider only Banach-algebra morphisms of norm $\le 1$; for our purposes though, this constraint is less natural and not particularly useful. For that reason, we emphasize that $\cat{BAlg}(A,B)$ consists of {\it all} continuous (complex, unital, algebra) morphisms, regardless of norm.
\end{Remark}

\subsection{Banach manifolds and Lie groups}\label{subse:banmf}

We need some background on (real) infinite-dimensional $C^p$ or smooth manifolds, modeled on Banach spaces, and the resulting notion of (Banach--)Lie group. There is no shortage of good sources in the literature, but they occasionally differ on the precise setup (e.g., in considering smooth vs.\ {\it analytic} manifolds).

\cite{bourb-vars-17,bourb-vars-8-15} give a very convenient uniform treatment of $C^p$ manifolds, where $p$ can be
\begin{itemize}\itemsep=0pt
\item a non-negative integer, whereby the $C^p$ condition (on a map, say) is defined recursively, as being differentiable with derivatives of class $C^{p-1}$ (and with $C^0$ signifying continuity);

\item the symbol `$\infty$', meaning of class $C^p$ for all non-negative integers $p$ (the term `smooth' is also customary for such manifolds/maps);

\item the symbol `$\omega$', meaning `analytic' (real, or complex, or over an ultrametric field).
\end{itemize}
As noted in \cite[Avertissement]{bourb-vars-17}, however, the work constitutes a summary of results and does not contain proofs. Lie {\it groups} are studied in a much more detailed fashion in \cite[Chapter III]{bourb-lie-13}, with the caveat that the text refers back to \cite{bourb-vars-17,bourb-vars-8-15} frequently.

Banach manifolds also receive a thorough treatment in \cite{lang-fund}, which focuses on the {\it smooth} (rather than analytic) setting. In particular, the Lie groups of \cite[Section~VI.5]{lang-fund} are required to be only ($C^p$ or) smooth; the same goes for those of \cite[Definition IV.1]{neeb-inf}. One the other hand, the Banach manifolds \cite[Section~3]{upm-ban} and Lie groups of \cite[Section~6]{upm-ban} are analytic.

Below, we work mostly with analytic (rather than smooth) manifolds and especially Lie groups. The ground field can be either $\bR$ or $\bC$, and the discussion will go through unaffected regardless of the choice. Over $\bC$ the analytic (i.e., holomorphic) setting is the only reasonable choice, while for real Banach {\it Lie groups} are automatically analytic \cite[Corollary IV.1.10]{neeb-lc}.

Natural examples of Lie groups include
\begin{itemize}\itemsep=0pt
\item Banach spaces with their additive structure \cite[Section~III.1.1, Examples following Proposition 3]{bourb-lie-13}, acting also as their own Lie algebras;
\item The multiplicative group $A^{\times}$ of invertible elements in a unital Banach algebra (\cite[Proposition IV.9]{neeb-inf} or \cite[Example 6.9]{upm-ban}, where the group is denoted by $G(A)$). The Lie algebra is $A$;
\item The unitary group $U(A)$ of a $C^*$-algebra $A$, with the space
 \begin{equation*}
 \{a\in A\mid a+a^*=0\}\subset A
 \end{equation*}
 of {\it skew-adjoint} elements as its Lie algebra \cite[Corollary~15.22]{upm-ban},
\end{itemize}
and so on.

\begin{Convention}\label{cv:qlie}
 Submanifolds $N\subseteq M$ and Lie subgroups (which will typically be closed), unless specified otherwise, are as in \cite[Section~5.8.3]{bourb-vars-17} (and \cite[Section~III.1.3, Definition~3]{bourb-lie-13}, \cite[Section~II.2]{lang-fund}, etc.): the requirement is that the resulting embeddings
 \begin{equation*}
 T_pN\le T_pM,\qquad p\in N
 \end{equation*}
 {\it split} as Banach-space morphisms, i.e., that there be closed subspaces
 \begin{equation*}
 X\le T_pM,\qquad T_pM\cong T_pN\oplus X.
 \end{equation*}

 By contrast, absent the splitting, the respective terms employed in \cite[Section~III.1.3, Remark]{bourb-lie-13} are {\it quasi-submanifold} and {\it Lie quasi-subgroup}. We use these too, on occasion.
\end{Convention}

\begin{Remark}
 These conventions are {\it not} universal: \cite[Definition IV.6]{neeb-inf}, for instance, requires closure of Lie subgroups, but not splitting. So,
 \begin{itemize}\itemsep=0pt
 \item the Lie quasi-subgroups of \cite[Section~III.1.3, Remark]{bourb-lie-13} are the (plain) Lie subgroups of \cite[Definition IV.6]{neeb-inf};
 \item while the Lie subgroups of \cite[Section~III.1.3, Definition 3]{bourb-lie-13} are termed {\it complemented} Lie subgroups in \cite[Definition IV.6]{neeb-inf}.
 \end{itemize}
\end{Remark}

We always assume Lie algebras $\fu:=\operatorname{Lie}(U)$ of Banach Lie groups to be equipped with norms making them into {\it Banach Lie algebras} \cite[discussion following Definition IV.1]{neeb-inf}: $(\fu,\|\cdot\|)$ is a~Banach space, and
\begin{equation*}
 \|[x,y]\|\le C\|x\| \|y\|,\qquad \forall x,y\in \fu \quad\text{for some}\quad C>0.
\end{equation*}

\section{Cocycle manifolds and discrete cohomology}\label{subse:cocyman}

In the discussion below, we will frequently have to work with spaces
\begin{equation*}
 C(X,M)
 :=
 \text{continuous maps }X\to M
\end{equation*}
for compact (always Hausdorff) spaces $X$ and Banach manifolds $M$. They will always be equipped with the {\it uniform topology} (or {\it compact-open topology} \cite[Definition preceding Theorem 46.8]{munk}, since $X$ is compact). For Banach {\it spaces} $E$, this topology on $C(X,E)$ is the one induced by the supremum (complete) norm
\begin{equation*}
 \|f\|:=\sup_{x\in X}\|f(x)\| = \max_{x\in X}\|f(x)\|,\qquad \forall f\in C(X,E).
\end{equation*}

As explained in \cite[Section~3.2]{ps-loop}, for a Banach Lie group $U$ with Lie algebra $\fu$ the space $C(X,U)$ is again a Banach Lie group, with $C(X,\fu)$ as its Lie algebra.

\subsection{Group cohomology (possibly non-abelian)}\label{subse:coh}

The setup will be that of compact (occasionally finite) groups $G$ acting on Banach Lie groups~$U$ so as to preserve the relevant structure (i.e., by analytic group automorphisms). Actions are always assumed continuous in the strongest reasonable sense:
\begin{equation*}
 G\times U\to U
\end{equation*}
is continuous. One frequently considers weaker notions of continuity (e.g., \cite[Section~X.1]{tak2}), but those will not play a role below.

For finite $G$, the reader can find quick treatments of the pertinent material on the resulting
\begin{itemize}\itemsep=0pt
\item {\it cohomology groups} $H^n(G,U)$, $n\in \bZ_{\ge 0}$ for abelian $U$;
\item and $H^n(G,U)$, $n=0,1$ for arbitrary $U$
\end{itemize}
in, say, \cite[Chapter VII and its Appendix]{ser-lf} or \cite[Sections~I.2 and I.5]{ser-gal}.

Briefly, the non-abelian machinery is as follows. Given an action
\begin{equation}\label{eq:gactu}
 G\times U\ni (g,u)\xmapsto{\quad} {}^gu\in U
\end{equation}
of a group $G$ (typically finite, for us) on a Lie group $U$ (possibly non-commutative and frequently infinite-dimensional), the {\it $U$-valued cocycles} attached to the action are the maps
\begin{equation}\label{eq:gtoug}
 G\ni g\mapsto u_g\in U
\end{equation}
satisfying the cocycle condition
\begin{equation}\label{eq:cocyccond}
 u_{gh} = u_g\cdot {}^g u_h,\qquad \forall g,h\in G.
\end{equation}
The space of all such cocycles is denoted by $Z^1(G,U)$. Two cocycles $(u_g)_g$ and $(v_g)_g$ are declared {\it equivalent} or {\it cohomologous} if there is some $w\in U$ with
\begin{equation*}
 v_g = w^{-1}\cdot u_g\cdot {}^g w,\qquad \forall g\in G,
\end{equation*}
and the set $H^1(G,U)$ of equivalence classes is the {\it first cohomology set} of $G$ with values in $U$.

These objects appear often in the theory of operator algebras (e.g., \cite[Definition X.1.4]{tak2}), in the process of deforming or ``twisting'' group actions on said algebras.

A smattering of other symbols featuring below:
\begin{itemize}\itemsep=0pt
\item $C^n(G,U):=C(G^n,U)$ is simply the space of maps $G^n\to U$ (which maps we refer to as {\it $n$-cochains}). It admits an obvious analytic-manifold structure, topology, etc., and is a~group when $U$ is abelian.

 Suppose $U$ is abelian. As explained in \cite[Section~VII.3]{ser-lf}, the space $C^n(G,U)$ can also be identified with that of $G$-module maps $G^{n+1}\to U$ (with $G$ acting on the domain diagonally, by left multiplication).
\item We have differentials
\[C^n(G,U)\xrightarrow{\quad\delta^n\quad}C^{n+1}(G,U),\] which compute the {\it cohomology} $H^n(G,U)$ as $
 H^n(G,U) = \ker \delta^n/\im \delta^{n-1}$.

\item $Z^n(G,U)\subset C^n(G,U)$ is the space of {\it $n$-cocycles} (with $n=0,1$ if $U$ is non-abelian). For abelian $U$, this is simply $Z^n(G,U):=\ker \delta^n$.
\item When $U$ is abelian,
 \begin{equation*}
 B^n(G,U):=\im\delta^{n-1}\subseteq Z^n(G,U)
 \end{equation*}
 is the space of {\it $n$-coboundaries}. We thus have
 \begin{equation*}
 H^n(G,U) = Z^n(G,U)/B^n(G,U).
 \end{equation*}
\end{itemize}
In all of the above, maps $G^n\to U$ are assumed continuous when working with compact $G$.

\subsection{Main results}\label{subse:gpmain}

\begin{Lemma}\label{le:cansplit}
 Let $G$ be a compact group acting linearly and continuously on a Banach space $E$ via
 \begin{equation*}
 G\times E\ni (g,e)\mapsto g\triangleright e\in E.
 \end{equation*}
 For every $n\in \bZ_{\ge 0}$, the map
 \begin{equation*}
 C^n(G,E)\xrightarrow{\quad\delta^n\quad} Z^{n+1}(G,E)
 \end{equation*}
 is a split surjection. Furthermore, we can choose splittings of norm $\le \sup_{g\in G}\|g\triangleright\|$.
\end{Lemma}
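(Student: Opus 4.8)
The plan is to build the splitting by \emph{averaging} against the normalized Haar measure $\mu$ of the compact group $G$; this averaging is exactly the device that makes higher cohomology vanish in the present ``averageable'' setting. Concretely, I would define a linear map $s\colon Z^{n+1}(G,E)\to C^n(G,E)$ by
\begin{equation*}
 (s\omega)(g_1,\dots,g_n)=\int_G x\triangleright\omega\big(x^{-1},g_1,\dots,g_n\big)\,\mathrm{d}\mu(x).
\end{equation*}
The vector-valued (Bochner) integral exists because the integrand is a continuous $E$-valued function on the compact space $G$; that same continuity, together with joint continuity of $G\times E\to E$ and of $\omega$, shows $s\omega\in C^n(G,E)$, and linearity in $\omega$ is immediate. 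This $s$ is nothing but the standard contracting homotopy of the inhomogeneous bar complex, with evaluation at a basepoint replaced by Haar averaging so as to preserve $G$-equivariance. A clean route to discovering and justifying it is to pass to the homogeneous description of $C^\bullet(G,E)$ referenced in the text, where the operator $h(F)(g_0,\dots,g_n)=\int_G F(x,g_0,\dots,g_n)\,\mathrm{d}\mu(x)$ is manifestly equivariant (by left invariance of $\mu$) and satisfies the one-line homotopy identity $dh+hd=\mathrm{id}$, and then to transport $h$ back through the usual isomorphism with the inhomogeneous complex.

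The heart of the matter is the identity $\delta^n\circ s=\mathrm{id}$ on $Z^{n+1}(G,E)$, i.e.\ $\delta^n(s\omega)=\omega$ whenever $\delta^{n+1}\omega=0$. I would expand $\delta^n(s\omega)$ via the inhomogeneous differential, move each $g_i\triangleright$ inside its integral, and reindex using left invariance of $\mu$; the cocycle relation $\delta^{n+1}\omega=0$ is then precisely what makes all but one of the resulting terms telescope, leaving $\omega$. The degree-$0$ case is already instructive and serves as a sanity check: there $(s\omega)=\int_G x\triangleright\omega(x^{-1})\,\mathrm{d}\mu(x)$, and the single substitution $x\mapsto gx$ combined with the cocycle identity $\omega(x^{-1}g)=\omega(x^{-1})+x^{-1}\triangleright\omega(g)$ yields $g\triangleright(s\omega)-(s\omega)=\omega(g)$. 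I expect the main obstacle to be purely organizational, namely carrying out this telescoping cleanly for general $n$ while keeping track of the signs in $\delta^n$ and of where the cocycle identity is invoked; routing the computation through the homogeneous picture, where $dh+hd=\mathrm{id}$ is immediate, is what should tame the bookkeeping.

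Finally, the norm estimate falls out of the formula directly: since $\mu$ is a probability measure, for each $(g_1,\dots,g_n)$ the norm of the integral is at most the supremum of the integrand's norm, whence
\begin{equation*}
 \|s\omega\|\le\Big(\sup_{x\in G}\|x\triangleright\|\Big)\,\|\omega\|,\qquad\text{and so}\qquad \|s\|\le\sup_{x\in G}\|x\triangleright\|,
\end{equation*}
the right-hand side being finite by continuity of the action and compactness of $G$ (uniform boundedness). Surjectivity of $\delta^n$ onto $Z^{n+1}(G,E)$, together with its being split, is then an immediate consequence of the existence of the bounded section $s$, completing the argument.
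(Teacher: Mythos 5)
Your proposal is correct and is essentially the paper's own argument: the paper also splits $\delta^n$ by Haar averaging, via $b(g_1,\dots,g_n):=\int_G g^{-1}\triangleright a(g,g_1,\dots,g_n)\,\mathrm{d}\mu(g)$, which coincides with your $s\omega$ after the substitution $x\mapsto x^{-1}$ (legitimate by inversion-invariance of Haar measure on a compact group). The paper merely states this formula and cites Moore for the verification, whereas you additionally sketch the telescoping check and the norm bound, both of which go through as you describe.
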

\begin{proof}
 The procedure is well known, and hinges on the ability to average against the Haar probability measure $\mu$ of $G$. In a slightly different context, the construction is described in \cite[Proof of Theorem~2.3]{moo-12} (which in turn cites earlier sources): given an $(n+1)$-cocycle ${a\in Z^{n+1}(G,E)}$,
 \begin{equation}\label{eq:splitgpcoc}
 b(g_1,\dots,g_{n}):=\int_{G}g^{-1}\triangleright a(g,\ g_1,\dots,g_n)\, \mathrm{d}\mu(g)
 \end{equation}
 is an $n$-cochain whose coboundary is precisely $a$.
\end{proof}

In the statement of Theorem \ref{th:cocyle-man-1} we will refer to the process of {\it twisting} an action by a~cocycle (e.g., \cite[Section~I.5.3, Example 2 preceding Proposition 34]{ser-gal} or \cite[Definition X.1.4, equation~(8)]{tak2}): if \eqref{eq:gactu} is an action and
\begin{equation*}
 u = (u_g)_g\in Z^1(G,U)
\end{equation*}
a 1-cocycle for it, one can define a new action by
\begin{equation*}
 g\triangleright x := u_g \cdot {}^g x \cdot u_g^{-1},\qquad g\in G,\quad x\in U.
\end{equation*}
Theorem \ref{th:cocyle-man-1} refers to the Lie-algebra version of this instead; see also Lemma \ref{le:equivact} for a more elaborate discussion in a slightly extended setup.

\begin{Theorem}\label{th:cocyle-man-1}
 Let $G$ be a compact group acting on a Banach Lie group $U$ with Lie algebra $\fu$.
 \begin{enumerate}[$(1)$]\itemsep=0pt
 \item\label{item:cocyle-man-1} The space
 \begin{equation*}
 Z^1(G,U)\subset C(G,U)
 \end{equation*}
 of $1$-cocycles is a closed analytic submanifold in the sense of {\rm \cite[\emph{Section}~5.8.3]{bourb-vars-17}}.

 \item\label{item:cocyle-man-1-tgz1} The tangent space to that subvariety at a cocycle $u\in Z^1(G,U)$ is $\rho_{u} Z^1(G,\fu)$, where
 \begin{itemize}\itemsep=0pt
 \item $\rho_{u}$ indicates right translation of tangent vectors by ${u}\in C(G,U)$;
 \item and the action `$\triangleright$' of $G$ on $\fu$ is the original one, twisted by the cocycle ${u}$:
 \begin{equation}\label{eq:defact}
 g\triangleright X:=\operatorname{Ad}_{u_g} {}^g X,\qquad \forall g\in G,\quad X\in \fu,
 \end{equation}
 with `$\operatorname{Ad}$' denoting the adjoint action of $U$ on its Lie algebra.
 \end{itemize}

 \item\label{item:cocyle-man-1-h1disc} The cohomology set $H^1(G,U)$, equipped with its quotient topology, is discrete.

\item\label{item:cocyle-man-1-coboundsplit} Every cocycle $u=(u_g)_g$ has a neighborhood $\cU$ in $Z^1(G,U)$ admitting an analytic map
 \begin{equation*}
 \cU\ni \left({u'}=(u'_g)_g\right)\xmapsto{\quad} v_{u'}\in U
 \end{equation*}
 such that
 \begin{align*}
 v_{u} =1\qquad\text{and}\qquad
 u'_g =v_{u'}^{-1}\cdot u_g\cdot {}^gv_{u'},\qquad \forall g\in G,\quad u'\in \cU.
 \end{align*}
 \end{enumerate}
\end{Theorem}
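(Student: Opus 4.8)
The plan is to prove the four parts in the order $(2)\to(1)\to(4)\to(3)$, letting the tangent-space computation fix the model space and invoking Lemma~\ref{le:cansplit} twice (at $n=1$ and $n=0$) for the two splittings that make everything run. First I would settle part $(2)$. Taking $u=(u_g)_g\in Z^1(G,U)$ and an analytic path $u^{(t)}$ of cocycles through $u$, I would write $u^{(t)}_g=v^{(t)}_g\,u_g$ with $v^{(0)}\equiv 1$; substituting into \eqref{eq:cocyccond} and cancelling $u_{gh}=u_g\cdot{}^gu_h$ reduces the cocycle identity to $v^{(t)}_{gh}=v^{(t)}_g\cdot\operatorname{Ad}_{u_g}\!\big({}^gv^{(t)}_h\big)$, the cocycle condition for the twisted action. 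Differentiating at $t=0$ gives $X_{gh}=X_g+g\triangleright X_h$ with $g\triangleright X=\operatorname{Ad}_{u_g}{}^gX$ exactly as in \eqref{eq:defact}, so $X:=\dot v^{(0)}\in Z^1(G,\fu)$ for the twisted action while the tangent vector to the path is $\rho_u X$. This identifies $T_uZ^1(G,U)=\rho_u Z^1(G,\fu)$, confirming $(2)$ once $(1)$ makes these vectors bona fide.

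For the submanifold claim $(1)$ I would produce an analytic chart at $u$. The map $X\mapsto\big(\exp X_g\cdot u_g\big)_g$ is a local analytic diffeomorphism from a neighbourhood of $0$ in $C^1(G,\fu)$ onto a neighbourhood of $u$ in $C(G,U)$, and under it membership in $Z^1(G,U)$ becomes the twisted cocycle condition on $v_\bullet:=\exp X_\bullet$. I would record the failure of that condition by the analytic curvature map $\Gamma(X)(g,h):=\log\!\big(v_g^{-1}\,v_{gh}\,(g\triangleright v_h)^{-1}\big)$, so that the chart-preimage of $Z^1(G,U)$ is $\Gamma^{-1}(0)$. Linearising at $X=0$ yields $d\Gamma_0=-\delta^1$, the twisted coboundary $C^1(G,\fu)\to Z^2(G,\fu)$, which Lemma~\ref{le:cansplit} at $n=1$ makes a split surjection; fixing a bounded splitting $s$ gives the decomposition $C^1(G,\fu)=Z^1(G,\fu)\oplus s\big(Z^2(G,\fu)\big)$, and the inverse/implicit function theorem then presents $\Gamma^{-1}(0)$ as an analytic graph over $Z^1(G,\fu)$. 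This exhibits $Z^1(G,U)$ as a closed analytic submanifold with the complemented tangent space of $(2)$. The \textbf{main obstacle} sits here: one must verify that $\Gamma$ genuinely takes values near $0$ in the \emph{linear} space $Z^2(G,\fu)$ rather than in the curved variety of non-abelian $2$-cocycles, and that its differential is the honest split surjection $\delta^1$; this is where the Haar-averaging content of Lemma~\ref{le:cansplit} is indispensable.

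I would then obtain $(4)$ and $(3)$ from the orbit map of the conjugation action. Consider the analytic map $o\colon U\to Z^1(G,U)$, $o(w)=\big(w^{-1}u_g\,{}^gw\big)_g$, into the manifold just built, with $o(1)=u$. Differentiating at $w=1$ gives $d o_1(X)=\rho_u\big(g\mapsto g\triangleright X-X\big)=\rho_u\,\delta^0 X$ for the twisted action, and Lemma~\ref{le:cansplit} at $n=0$ says $\delta^0\colon\fu\to Z^1(G,\fu)$ is a split surjection, its kernel (the twisted invariants) complemented by the averaging projector $X\mapsto\int_G g\triangleright X\,\mathrm{d}\mu(g)$; note surjectivity reflects $H^1(G,\fu)=0$, so the image is all of $T_uZ^1(G,U)$. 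Hence $o$ is a submersion at $1$ and admits an analytic local section $\cU\ni u'\mapsto v_{u'}\in U$ with $v_u=1$, and $o(v_{u'})=u'$ unwinds to the conjugation identity $u'_g=v_{u'}^{-1}u_g\,{}^gv_{u'}$ of part $(4)$. Finally $(4)$ says every cocycle near $u$ is cohomologous to $u$, so each cohomology class is open in $Z^1(G,U)$ and the quotient topology on $H^1(G,U)$ is discrete, giving $(3)$.
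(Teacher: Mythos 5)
Your handling of parts (2), (3) and (4) is essentially the paper's own argument: the paper too identifies the tangent space via the twisted action \eqref{eq:defact}, and it proves (3) and (4) by observing that the differential at $1\in U$ of the orbit map $v\mapsto\big(v^{-1}u_g\,{}^gv\big)_g$ is the twisted cochain differential $C^0(G,\fu)\to C^1(G,\fu)$, which by Lemma~\ref{le:cansplit} (at $n=0$) surjects onto $T_uZ^1(G,U)=Z^1(G,\fu)=B^1(G,\fu)$ with split kernel, making that map a submersion in the sense of \cite[Section~5.9.1]{bourb-vars-17}; local sections give (4) and openness of the image gives (3). So the architecture of your proposal matches the paper throughout.

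The genuine gap is in part (1), precisely at the step you yourself flag, and Lemma~\ref{le:cansplit} alone does not close it. Your curvature map $\Gamma$ takes values in $C^2(G,\fu)$, \emph{not} in $Z^2(G,\fu)$: the exact identity satisfied by the non-abelian coboundary (for the ordering $c(g,h)=v_g\,(g\triangleright v_h)\,v_{gh}^{-1}$; yours differs immaterially) is
\[
c(g,h)\,c(gh,k)\;=\;\operatorname{Ad}_{v_g}\!\big(g\triangleright c(h,k)\big)\,c(g,hk),
\]
a twisted, non-linear analogue of the cocycle identity, so $\log c$ satisfies the \emph{linear} $2$-cocycle identity only modulo terms quadratic in $\big(\|X\|,\|\Gamma(X)\|\big)$. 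Consequently, viewed as a map into $C^2(G,\fu)$, $\Gamma$ is not a submersion at $0$ — the image of $d\Gamma_0=-\delta^1$ is only $B^2(G,\fu)=Z^2(G,\fu)$, in general a proper closed subspace — so the preimage theorem does not apply as you state it; and if you repair the target by composing with the bounded projection $P\colon C^2(G,\fu)\to Z^2(G,\fu)$ (available from Lemma~\ref{le:cansplit} one degree up, since $Z^2=\ker\delta^2$ is complemented), then $P\Gamma$ \emph{is} a submersion, but a priori $(P\Gamma)^{-1}(0)$ could be strictly larger than $\Gamma^{-1}(0)$, so you still owe an argument that $P\Gamma(X)=0$ forces $\Gamma(X)=0$ for small $X$. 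That can be done — e.g.\ combine the displayed identity, which gives $\|\delta^2\Gamma(X)\|\le C\,\|\Gamma(X)\|\big(\|\Gamma(X)\|+\|X\|\big)$, with the bound $\|\omega\|\le\|s'\|\,\|\delta^2\omega\|$ valid on $\ker P$ (where $s'$ splits $\delta^2$) to bootstrap $\Gamma(X)=0$ — but it is a genuinely missing idea, not a routine verification. The paper sidesteps the issue entirely: it never forms $\Gamma$, but instead applies the splitting $s$ of Lemma~\ref{le:cansplit} only to $\delta^1X$, which lies in $B^2(G,\fu)\subseteq Z^2(G,\fu)$ automatically for \emph{every} cochain $X$; the BCH formula then shows that on the cocycle set $\delta^1X$ is $O\big(\|X\|^2\big)$, so $Y:=s\big(\delta^1X\big)$ is quadratically small there, and the bounded linear projection $X\mapsto X-Y=(\id-s\delta^1)X$ of $C^1(G,\fu)$ onto $Z^1(G,\fu)$ restricts to an analytic chart of $\cV\cap Z^1(G,U)$ onto a neighborhood of $0\in Z^1(G,\fu)$. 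To complete your write-up you should either add the bootstrap argument above or switch to this formulation.
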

\begin{proof}
 We address the claims sequentially.

 \emph{Parts {\rm (\ref{item:cocyle-man-1})} and {\rm(\ref{item:cocyle-man-1-tgz1})}:} Fix, throughout the discussion, a 1-cocycle
 \begin{equation*}
 u=(u_g)_{g\in G}\in Z^1(G,U)
 \end{equation*}
 and the resulting twisted action \eqref{eq:defact} of $G$ on the Lie algebra $\fu$.

 A sufficiently small neighborhood $\cV$ of $u$ in the Lie group $C(G,U)$ consists of elements of the form
 \begin{equation*}
 v=(v_g)_g,\qquad v_g = \exp(X_g)\cdot u_g
 \end{equation*}
 for small-norm elements $X_g\in \fu$, with $\exp\colon\fu\to U$ the {\it exponential} (or {\it exponential function} \cite[discussion following Definition IV.1]{neeb-inf} ) of the Lie group.

 The additional condition that $v$ itself be a 1-cocycle reads
 \begin{equation*}
 \exp(X_{gh})u_{gh} = \exp(X_g)u_g \cdot \exp\left({}^gX_h\right){}^gu_h.
 \end{equation*}
 Or:
 \begin{align*}
 \exp(X_{gh})u_{gh} &= \exp(X_g)\cdot u_g\exp({}^gX_h)u_g^{-1}\cdot u_g\cdot{}^gu_h\\
 &= \exp(X_g)\cdot u_g\exp({}^gX_h)u_g^{-1}\cdot u_{gh}
 \quad\text{by \eqref{eq:cocyccond}}\\
 &=\exp(X_g)\cdot \exp(\operatorname{Ad}_{u_g}\,{}^gX_h)\cdot u_{gh}
 \quad\text{by the definition of the adjoint action}\\
 &=\exp(X_g)\cdot \exp(g\triangleright X_h)\cdot u_{gh}
 \quad\text{by the definition of the action `$\triangleright$'}.
 \end{align*}
 The rightmost factor cancels out to give
 \begin{equation*}
 \exp(X_{gh}) = \exp(X_g)\cdot \exp(g\triangleright X_h).
 \end{equation*}
 By the {\it BCH} formula ({\it Baker--Campbell--Hausdorff}: \cite[Definition IV.1.3 and Corollary IV.1.10]{neeb-lc}), this translates to
 \begin{equation*}
 X_{gh} = X_g + g\triangleright X_h + \frac 12\left[X_g,g\triangleright X_h\right] + \cdots.
 \end{equation*}
 When all norms $\|g\triangleright X_h\|$ are sufficiently small (which we may as well assume), the right-hand partial sum starting with the summand $\frac 12[-,-]$ onward is $O(\|X_g\|\cdot \|g\triangleright X_h\|)$ in ``big O notation''~\cite{bigo}: its norm is dominated by $C\|X_g\|\cdot \|g\triangleright X_h\|$ for some fixed constant $C>0$.

 If $\|(X_g)_g\| = \max_{g\in G} \|X_g\|$ is of order $\varepsilon$,
 \begin{equation*}
 \left(X_{gh} - X_g - g\triangleright X_h\right)_{g,h}\in B^2(G,\fu)
 \end{equation*}
 is a 2-coboundary and hence a 2-cocycle of size $O(\varepsilon^2)$. It follows from Lemma \ref{le:cansplit} that we can recover it as the coboundary of a {\it smaller-norm} cochain:
 \begin{equation*}
 X_{gh} - X_g - g\triangleright X_h
 =
 Y_{gh} - Y_g - g\triangleright Y_h
 \end{equation*}
 with $\|g\triangleright Y_{\bullet}\|=O\big(\varepsilon^2\big)$, i.e., $(X_g-Y_g)_g\in Z^1(G,\fu)$.
 Furthermore, the same Lemma \ref{le:cansplit} also ensures that $(X_g)_g\mapsto (Y_g)_g$ can be chosen to be analytic.

 The fact that $(Y_g)_g$ is in $O\big(\varepsilon^2\big)$ whenever $(X_g)_g$ is of order $\varepsilon$ also ensures that this map can be inverted, so we have an analytic chart on $\cU:=\cV\cap Z^1(G,U)$:
 \begin{equation*}
 \cU\ni (\exp(X_g)\cdot u_g)_g
 \xmapsto{\quad}
 (X_g)_g
 \xmapsto{\quad}
 (X_g-Y_g)\in Z^1(G,\fu),
 \end{equation*}
 onto some neighborhood of $0\in Z^1(G,\fu)$. This suffices to prove (\ref{item:cocyle-man-1}) and (\ref{item:cocyle-man-1-tgz1}): the notion of subvariety in the statement requires that the tangent space
 \begin{equation*}
 T_u Z^1(G,U)\le T_u C(G,U)
 \end{equation*}
 split, which is indeed the case, once more by Lemma \ref{le:cansplit}.

\emph{ Parts {\rm(\ref{item:cocyle-man-1-h1disc})} and {\rm(\ref{item:cocyle-man-1-coboundsplit})}:} Fix $u=(u_g)_g\in Z^1(G,U)$.
 The differential of the map
 \begin{equation}\label{eq:noncommdiff}
 U\ni v\mapsto \big(v^{-1}\cdot u_g\cdot {}^g v\big)_g\in Z^1(G,U)
 \end{equation}
 at the origin $1\in U$ is nothing but the cochain differential $C^0(G,\fu)\to C^1(G,\fu)$ (for the twisted action `$\triangleright$' discussed above), so it surjects onto the tangent space
 \begin{equation*}
 T_u Z^1(G,U) = Z^1(G,\fu) = B^1(G,\fu)
 \end{equation*}
 (by part (\ref{item:cocyle-man-1-tgz1})) with split kernel (see Lemma \ref{le:cansplit}).

 This makes \eqref{eq:noncommdiff} a {\it submersion} in the sense of \cite[Section~5.9.1\,(i)]{bourb-vars-17}, so that it also satisfies the other, equivalent conditions of \cite[Section~5.9.1]{bourb-vars-17}:
 \begin{itemize}\itemsep=0pt
 \item By \cite[Section~5.9.1\,(iv)]{bourb-vars-17} some neighborhood of $1\in U$ maps, through \eqref{eq:noncommdiff}, {\it onto} some open neighborhood of $u\in Z^1(G,U)$. This means that cocycles sufficiently close to $u$ are cohomologous to it, hence the discreteness of the cohomology set. This proves part (\ref{item:cocyle-man-1-h1disc}).
 \item And, also by \cite[Section~5.9.1\,(iv)]{bourb-vars-17}, \eqref{eq:noncommdiff} admits a {\it local section}: precisely what (\ref{item:cocyle-man-1-coboundsplit}) requires.
 \end{itemize}
 This concludes the proof as a whole.
\end{proof}

\begin{Remark}\label{re:serrex}
 Something along the lines of the above argument is presumably what is intended in \cite[Part II, Chapter IV, Exercise 1]{ser-lag}, which asks the reader to show that for
 \begin{itemize}\itemsep=0pt
 \item a finite group $G$;
 \item acting on a {\it compact} (finite-dimensional) Lie group $U$ over a locally compact field (possibly ultrametric, possibly of positive characteristic);
 \item with $|G|$ coprime to the characteristic of $k$
 \end{itemize}
 the cohomology set $H^1(G,U)$ is finite.

 That same exercise also asks the same of the higher cohomology groups $H^n(G,U)$, $n\ge 1$ for abelian $U$; see Theorem \ref{th:cocyle-man-n} below for the version relevant to the present setting.
\end{Remark}

Note the following consequence of Theorem \ref{th:cocyle-man-1}:

\begin{Corollary}\label{cor:nearbymor}
 For a compact group $G$ and Banach Lie group $U$ the space of morphisms $G\to U$ is a submanifold of $C(G,U)$, and sufficiently close morphisms are conjugate in $U$.
\end{Corollary}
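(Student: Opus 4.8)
The plan is to observe that morphisms $G \to U$ are precisely the $1$-cocycles for the \emph{trivial} action of $G$ on $U$, and then to read off the conclusion from Theorem \ref{th:cocyle-man-1}. Concretely, taking ${}^g u = u$ for all $g \in G$ and $u \in U$ in \eqref{eq:gactu}, the cocycle condition \eqref{eq:cocyccond} collapses to $u_{gh} = u_g\cdot u_h$, which is exactly the homomorphism identity. Since continuity is built into both definitions, the space of morphisms $G \to U$ is literally $Z^1(G,U)$ (for the trivial action) as a subset of $C(G,U)$.

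Granting this identification, the first assertion is immediate from part (\ref{item:cocyle-man-1}) of Theorem \ref{th:cocyle-man-1}, which exhibits $Z^1(G,U)$ as a closed analytic submanifold of $C(G,U)$. For the second assertion I would specialize the cohomology relation: two cocycles $(u_g)_g$ and $(v_g)_g$ are cohomologous when $v_g = w^{-1} u_g\cdot {}^g w$ for some $w \in U$, and under the trivial action this is simply $v_g = w^{-1} u_g w$, i.e., conjugation in $U$. Hence the discreteness of $H^1(G,U)$ furnished by part (\ref{item:cocyle-man-1-h1disc}) translates directly into the statement that any morphism sufficiently close to a given one is conjugate to it; part (\ref{item:cocyle-man-1-coboundsplit}) even provides the conjugating element analytically in the nearby morphism, which is more than the corollary requires.

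Because the argument is a verbatim specialization of an already-proved theorem, I do not anticipate any real obstacle. The only points needing a moment's attention are the two bookkeeping identifications---the trivial-action cocycle condition with the homomorphism condition, and the trivial-action coboundary relation with ordinary conjugacy---both of which are transparent.
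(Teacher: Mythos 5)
Your proposal is correct and follows exactly the paper's own argument: the paper's proof is precisely the specialization of Theorem \ref{th:cocyle-man-1} to the trivial $G$-action, under which cocycles become morphisms and cohomologous cocycles become conjugate morphisms. Your write-up simply spells out the two bookkeeping identifications in more detail than the paper does.
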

\begin{proof}
 This is what Theorem \ref{th:cocyle-man-1} specializes to when $G$ acts on $U$ {\it trivially}: cocycles are morphisms, and cohomologous cocycles (i.e., morphisms) are conjugate.
\end{proof}

\begin{Remarks}\label{res:mz}\quad
 \begin{enumerate}[(1)]\itemsep=0pt

 \item\label{item:remz} For a cognate of Corollary \ref{cor:nearbymor}, see \cite[Theorem 1 and Corollary]{mz-conj}: compact subgroups of a~{\it finite-dimensional} Lie group are conjugate if they are ``sufficiently close'' to one another, in a sense that can be made precise.

 Those results, in fact, can be used to give an alternative and much shorter argument for (most of) Theorem \ref{th:cocyle-man-1} in the finite-dimensional case.

 \item For a possibly infinite {\it discrete} group $G$ generated by $g_i$, $1\le i\le n$, it still makes sense to regard the space $\Hom(G,U)$ of group morphisms as a subset of
 \begin{equation*}
 U^n\cong \mathrm{functions} \{g_i\mid 1\le i\le n\}\to U.
 \end{equation*}
 If $G$ is infinite however, none of Corollary \ref{cor:nearbymor} survives:
 \begin{itemize}\itemsep=0pt
 \item The quotient space by the conjugation action by $U$ need not be discrete in general. Indeed, when $U$ is algebraic the quotients by that conjugation action (in the sense of {\it geometric invariant theory}: \cite[Chapter 1, Section~4]{fkm}) are the so-called {\it character varieties} of, say, \cite[Introduction]{ls} and the many references mentioned there. In other words, nearby morphisms $G\to U$ need not be conjugate.
 \item And moreover, $\Hom(G,U)\subset U^n$ will not be a submanifold: consider the case $G=\bZ^2$, in which case $\Hom(G,U)$ can be identified with the space of commuting elements in $U$.

 As \cite[paragraph following the proof of Theorem 3.1]{gs} notes, for instance, this space is not locally Euclidean for $U={\rm SO}(3)$ (see also \cite[p.~739]{baird} for $U={\rm SU}(2)$).
 \end{itemize}
 \end{enumerate}
\end{Remarks}

By essentially the same argument, Theorem \ref{th:cocyle-man-1} says somewhat more than Corollary \ref{cor:nearbymor}:

\begin{Corollary}\label{cor:morsplit}
 Let $G$ be a compact group and $U$ a Banach Lie group.

 The space
 \begin{equation*}
 \Hom(G,U)\subset C(G,U)
 \end{equation*}
 of continuous morphisms is a closed analytic submanifold, and any morphism $\varphi\colon G\to U$ has a~neighborhood $\cU\subset \Hom(G,U)$ admitting an analytic map $\cU\ni \psi\xmapsto{\quad}u_{\psi}\in U$ such that
 \begin{itemize}\itemsep=0pt
 \item $u_{\varphi}=1$;
 \item and for all $\psi\in \cU$ we have $\psi = u_{\psi}\cdot \varphi\cdot u_{\psi}^{-1}$. \qedhere
 \end{itemize}
\end{Corollary}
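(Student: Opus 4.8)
The plan is to specialize Theorem~\ref{th:cocyle-man-1} to the \emph{trivial} action of $G$ on $U$, exactly as in the proof of Corollary~\ref{cor:nearbymor}, but now extracting the full conjugation data from part~(\ref{item:cocyle-man-1-coboundsplit}) rather than only the close-hence-cohomologous conclusion. First I would observe that when $G$ acts trivially the cocycle identity \eqref{eq:cocyccond} reduces to $u_{gh}=u_g u_h$, so that the set $Z^1(G,U)$ coincides, inside $C(G,U)$, with the set $\Hom(G,U)$ of continuous group morphisms; since both inherit the subspace topology from $C(G,U)$ and their underlying sets agree, this is an identification of topological spaces. The closed-analytic-submanifold assertion then follows immediately from part~(\ref{item:cocyle-man-1}) of the theorem.

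For the conjugation statement I would apply part~(\ref{item:cocyle-man-1-coboundsplit}) at the morphism $\varphi=(u_g)_g$. It supplies a neighborhood $\cU$ of $\varphi$ in $Z^1(G,U)=\Hom(G,U)$ together with an analytic map $\cU\ni\psi=(u'_g)_g\mapsto v_\psi\in U$ satisfying $v_\varphi=1$ and $u'_g=v_\psi^{-1}\cdot u_g\cdot {}^g v_\psi$ for all $g$. Because the action is trivial we have ${}^g v_\psi=v_\psi$, so this collapses to $u'_g=v_\psi^{-1}u_g v_\psi$, that is, $\psi=v_\psi^{-1}\,\varphi\,v_\psi$. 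To match the handedness demanded by the statement I would then set $u_\psi:=v_\psi^{-1}$, which yields $\psi=u_\psi\,\varphi\,u_\psi^{-1}$ and $u_\varphi=v_\varphi^{-1}=1$; the assignment $\psi\mapsto u_\psi$ is analytic since $\psi\mapsto v_\psi$ is and inversion in the Banach Lie group $U$ is analytic.

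I do not expect a genuine obstacle here: all the substantive content is already carried by Theorem~\ref{th:cocyle-man-1}, and the remaining work is purely the bookkeeping of the trivial-action specialization together with the inversion $v_\psi\mapsto v_\psi^{-1}$ that flips the side on which $U$ conjugates. The one point I would verify carefully is precisely that identification $Z^1(G,U)=\Hom(G,U)$ and the resulting disappearance of the $^g(-)$ twist in part~(\ref{item:cocyle-man-1-coboundsplit}), since everything else is then a one-line rewriting.
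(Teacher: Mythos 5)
Your proposal is correct and matches the paper's intended argument exactly: the paper gives no separate proof of Corollary~\ref{cor:morsplit}, stating only that it follows ``by essentially the same argument'' as Corollary~\ref{cor:nearbymor}, i.e.\ by specializing Theorem~\ref{th:cocyle-man-1} (parts~(\ref{item:cocyle-man-1}) and~(\ref{item:cocyle-man-1-coboundsplit})) to the trivial $G$-action, which is precisely what you do. Your careful handling of the identification $Z^1(G,U)=\Hom(G,U)$ and the handedness flip $u_\psi:=v_\psi^{-1}$ supplies exactly the bookkeeping the paper leaves implicit.
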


There is also an abelian-group version of Theorem \ref{th:cocyle-man-1}, concerning arbitrary rather than 1-cohomology. We omit the proof, as no essentially new ideas are needed: the only additional complications are notational.

\begin{Theorem}\label{th:cocyle-man-n}
 Let $G$ be a compact group acting on an abelian Banach Lie group $U$ with Lie algebra $\fu$ and $n\in \bZ_{>0}$.
 \begin{enumerate}[$(1)$]\itemsep=0pt
 \item\label{item:3} The space
 \begin{equation*}
 Z^n(G,U)\subset C(G^n,U)
 \end{equation*}
 of $n$-cocycles is a Lie subgroup in the sense of {\rm\cite[\emph{Section}~III.1.3, \emph{Definition}~3]{bourb-lie-13}}.
 \item\label{item:4} The tangent space to that subvariety at a cocycle ${u}\in Z^n(G,U)$ is $\rho_{u} Z^n(G,\fu)$, where $\rho_{u}$ indicates right translation of tangent vectors by ${u}\in C(G^n,U)$ and the action of $G$ on $\fu$ is the original one.
 In particular, the Lie algebra of $Z^n(G,U)$ is $Z^n(G,\fu)$.
 \item\label{item:5} The cohomology set $H^n(G,U)$, equipped with its quotient topology, is discrete.
 \item\label{item:6} Every $n$-cocycle ${u}$ has a neighborhood $\cU$ in $Z^n(G,U)$ admitting an analytic map
 \begin{equation*}
 \cU\ni {u'}\xmapsto{\quad} v_{u'}\in C^{n-1}(G,U)
 \end{equation*}
 such that
 \begin{align*}
 v_{u} =0\qquad\text{and}\qquad
 u' =u-\partial v_{u'},\qquad \forall u'\in \cU,
 \end{align*}
 where $\partial\colon C^{n-1}(G,U)\twoheadrightarrow B^n(G,U)$ is the usual differential. \qedhere
\end{enumerate}
\end{Theorem}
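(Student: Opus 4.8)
The plan is to run the proof of Theorem~\ref{th:cocyle-man-1} essentially verbatim, the point being that commutativity of $U$ removes the nonlinear Baker--Campbell--Hausdorff corrections that complicated the $n=1$ argument, while the role played there by the twisted action is taken here by the original (untwisted) action of $G$ on $\fu$. Throughout I fix a cocycle $u=(u_g)_g\in Z^n(G,U)$.

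\emph{Parts (\ref{item:3}) and (\ref{item:4}).} I would work in the exponential chart on $C(G^n,U)$ near $u$, writing a nearby cochain as $v=(\exp(X_g)\cdot u_g)_g$ for a small-norm $X=(X_g)_g\in C^n(G,\fu)$. Since $G$ acts by automorphisms, $\exp$ intertwines the actions (${}^g\exp(Y)=\exp({}^gY)$), and since $U$ is abelian all factors entering the differential commute and $\exp$ is locally a homomorphism; consequently
\[
\delta^n v=\exp(\delta^n X)\cdot \delta^n u=\exp(\delta^n X),
\]
the last equality because $u$ is a cocycle. For small $\|X\|$ the element $\delta^n X$ is small and $\exp$ is injective near $0$, so the condition that $v$ again be a cocycle is \emph{exactly} $\delta^n X=0$, i.e.\ $X\in Z^n(G,\fu)$. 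Thus the chart identifies a neighborhood of $u$ in $Z^n(G,U)$ with a neighborhood of $0$ in the closed linear subspace $Z^n(G,\fu)\subseteq C^n(G,\fu)$, and differentiating at $X=0$ shows the tangent space is $\rho_u Z^n(G,\fu)$ for the original action, giving (\ref{item:4}). That $Z^n(G,\fu)=\ker\delta^n$ is \emph{complemented} in $C^n(G,\fu)$ --- equivalently, that the embedding of tangent spaces splits as required by \cite[Section~III.1.3, Definition~3]{bourb-lie-13} --- is exactly Lemma~\ref{le:cansplit}, since a split surjection has complemented kernel. This yields (\ref{item:3}).

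\emph{Parts (\ref{item:5}) and (\ref{item:6}).} Next I would consider the analytic map
\[
\Phi\colon C^{n-1}(G,U)\ni w\longmapsto u\cdot(\partial w)^{-1}\in Z^n(G,U),
\]
with image the coset $u\cdot B^n(G,U)$ and $\Phi(0)=u$ (this is the relation $u'=u-\partial v_{u'}$ of the statement, in the additive notation for the abelian $U$). Its differential at $0$ is, after the right-translation identification of part (\ref{item:4}), the coboundary map $C^{n-1}(G,\fu)\to B^n(G,\fu)$. Two instances of Lemma~\ref{le:cansplit} finish the job: in degree $n\ge 1$ it gives $Z^n(G,\fu)=B^n(G,\fu)$, so this differential surjects onto the full tangent space $T_uZ^n(G,U)=\rho_uZ^n(G,\fu)$; and in degree $n-1$ it shows $\delta^{n-1}$ is a split surjection, so the differential has complemented kernel. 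Hence $\Phi$ is a submersion at $0$ in the sense of \cite[Section~5.9.1]{bourb-vars-17}, and the equivalent conditions there supply both a local section --- the analytic assignment $u'\mapsto v_{u'}$ of part (\ref{item:6}), normalized by $v_u=0$ --- and the surjectivity of $\Phi$ onto a neighborhood of $u$. The latter says every cocycle sufficiently near $u$ lies in its cohomology class, so $H^n(G,U)$ is discrete, proving (\ref{item:5}).

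\emph{Main obstacle.} I expect no genuine obstacle, which is precisely why the proof is omitted in the main text: the abelian hypothesis makes the cocycle equation linear in the chart, so the delicate BCH estimate and the auxiliary smaller-norm cochain $(Y_g)_g$ of the $n=1$ argument are unnecessary. The only point deserving care is bookkeeping --- checking that equivariance of $\exp$ together with commutativity of $U$ yields the clean identity $\delta^n(\exp(X)\cdot u)=\exp(\delta^n X)$, and that right translation by $u$ consistently identifies $T_uC(G^n,U)$ with $C^n(G,\fu)$ and $T_uZ^n(G,U)$ with $Z^n(G,\fu)$, so that the degree-$(n-1)$ and degree-$n$ instances of Lemma~\ref{le:cansplit} apply as stated.
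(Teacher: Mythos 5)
Your proposal is correct and is essentially the proof the paper has in mind when it omits the argument for Theorem~\ref{th:cocyle-man-n}: exponential charts near a cocycle, Lemma~\ref{le:cansplit} supplying both the tangent-space splitting and the surjectivity of the coboundary onto $Z^n(G,\fu)=B^n(G,\fu)$, and the submersion criterion of \cite[Section~5.9.1]{bourb-vars-17} yielding the local section and discreteness, exactly as in the proof of Theorem~\ref{th:cocyle-man-1}. Your observation that commutativity of $U$ trivializes the BCH corrections (so the auxiliary smaller-norm cochain $(Y_g)_g$ is unnecessary and the cocycle equation becomes linear in the chart) is precisely the sense in which the paper asserts that ``the only additional complications are notational.''
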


To expand on Theorem \ref{th:cocyle-man-1} somewhat, we will take a cue from \cite{mart-proj}. That paper studies {\it projective} unitary representations of compact groups (such as our $G$) in (unital) $C^*$-algebras. This means that
\begin{itemize}\itemsep=0pt
\item on the one hand the results therein (e.g., \cite[Proposition 1.7, 2.2 and 2.3]{mart-proj}) are somewhat narrower in scope, in that they only deal with maps from $G$ into the group of unitaries of a $C^*$-algebra;
\item while on the other hand, said results are broader in a different direction: they handle not just morphisms $G\to U(A)$ (which would fall within the ambit of Corollaries \ref{cor:nearbymor} and \ref{cor:morsplit}), but rather {\it projective} morphisms (or representations).

 Recall what this means (e.g., \cite[Section~1.4]{mart-proj}):

 Fix a {\it normalized} 2-cocycle $\sigma\in Z^2\big(G,\bS^1\big)$ for the trivial $G$-action on $\bS^1$, i.e., one that is trivial as soon as one of its arguments is (\cite[Section~III.1]{br-coh} or \cite[Section~2.3]{ev-coh}, for instance, use this same terminology).

 Then, one considers continuous unital maps $\varphi\colon G\to U(A)$ such that
 \begin{alignat*}{3}
 &\varphi(gh) = \sigma(g,h)\varphi(g)\varphi(h),\qquad &&\forall g,h\in G,&\\
 &\varphi(g)^*= \sigma\big(g,g^{-1}\big) \varphi\big(g^{-1}\big),\qquad &&\forall g\in G.&
 \end{alignat*}
\end{itemize}

There ought, then, to be a common generalization; we will see below that indeed there is.

\begin{Notation}\label{not:norm}
A `$\nu$' subscript will indicate normalization for cochains, cocycles, etc. There are thus spaces $C^n_{\nu}(G,U)$ and $Z^n_{\nu}(G,U)$ of normalized cochains and cocycles respectively for abelian~$U$,~$C^1_{\nu}(G,U)$ and $Z^1_{\nu}(G,U)$ for arbitrary $U$, etc.

Lemma \ref{le:cansplit} and Theorems \ref{th:cocyle-man-1} and \ref{th:cocyle-man-n} go through for normalized cochains/cocycles as well, as can be seen by modifying the proofs in the obvious fashion; we will not elaborate.
\end{Notation}

Throughout, fix a {\it short exact sequence}
\begin{equation}\label{eq:gpext}
 \cE \colon \
 \{1\}\to K\xrightarrow{\quad} U\xrightarrow{\quad} U/K\to \{1\}
\end{equation}
of Banach Lie groups, where
\begin{itemize}\itemsep=0pt
\item $K$ is in fact {\it central} in $U$;
\item as well as a Lie subgroup in the sense of \cite[Section~III.1.3, Definition~3]{bourb-lie-13} (in particular, closed in~$U$);
\item and $U/K$ is the resulting quotient, per \cite[Section~III.1.6, Proposition~11\,(ii)]{bourb-lie-13}.
\end{itemize}
Additionally, the (compact, as before) group $G$ acts on $U$, continuously and by automorphisms, preserving $\cE$ (i.e., leaving $K$ invariant as a set). Short exact sequences \eqref{eq:gpext} for which $K\le U$ is central will themselves be termed `central' (so that the phrase applies to both subgroups and sequences).

\begin{Examples}\label{exs:motivk}
 The two motivating instances the reader should keep in mind are:
 \begin{enumerate}[(1)]\itemsep=0pt
 \item that of trivial $K$, whereby the setup specializes to a $G$-action on $U$ and hence to the framework relevant to Theorem \ref{th:cocyle-man-1}.
 \item\label{item:motivk-martin} that of the embedding $(K\le U)
 :=
 \big(\bS^1\le U(A)\big)$ of the modulus-1 scalars into the unitary group of a $C^*$-algebra with $G$ acting trivially on everything in sight; this would be the setup pertinent to \cite{mart-proj}.
 \end{enumerate}
\end{Examples}

We now need some terminology.

\begin{Definition}\label{def:relcocyc}
 Consider a compact-group action on an exact sequence \eqref{eq:gpext} of Banach Lie groups.
 \begin{enumerate}[(1)]\itemsep=0pt
 \item An {\it $\cE$- $($or $K$-$)$relative} 1-cocycle is a continuous map \eqref{eq:gtoug} satisfying the cocycle condition~\eqref{eq:cocyccond} modulo $K$.

 In other words, $g\mapsto u_g$ becomes a 1-cocycle upon further composition with $U\to U/K$.
 \item The same terminology applies to higher cocycles provided $U$ is abelian.
 \end{enumerate}
 To indicate the relative nature of the cocycles, we decorate the relevant cocycle spaces with a~left-hand `$\cE$' (or $K$) subscript, with the two possible subscripts (`$\cE$' and `$\nu$', per Notation \ref{not:norm}) amenable to compounding:
 \begin{itemize}\itemsep=0pt
 \item ${}_{\cE}Z^1(G,U)$ is the space of $\cE$-relative 1-cocycles;
 \item ${}_{\cE}Z^1_{\nu}(G,U)$ is that of {\it normalized} $\cE$-relative 1-cocycles;
 \item ${}_{\cE}Z^n_{\nu}(G,U)$ that of normalized $\cE$-relative n-cocycles when $U$ is abelian, etc.
 \end{itemize}
\end{Definition}

\begin{Notation}\label{not:relsigma}
 In the context of Definition \ref{def:relcocyc}, consider an $\cE$-relative cocycle $u\in {}_{\cE}Z^n(G,U)$ (with $n=1$ if $U$ is not abelian). For abelian $U$ the coboundary $\partial u$ is an $(n+1)$-cocycle $ \partial u\in Z^{n+1}(G,K)$ valued in the subgroup $K\le U$. Similarly, for arbitrary $U$ and $n=1$, the ``coboundary''
 \begin{equation}\label{eq:nccobound}
 G^2\ni (g,h)\xmapsto{\quad} u_{gh}\cdot \left(u_g\cdot {}^g u_h\right)^{-1}\in K
 \end{equation}
 is a 2-cocycle in $Z^2(G,K)$.

 Conversely, given a cocycle $\sigma\in Z^{n+1}(G,K)$, we denote by
 \begin{equation*}
 {}_{\sigma}Z^{n}(G,U)\subset {}_{\cE}Z^n(G,U)
 \end{equation*}
 the space of those $\cE$-relative cocycles whose coboundary is $\sigma$.

 More generally, for a set $S\subseteq Z^{n+1}(G,K)$,
 \begin{equation*}
 {}_{S}Z^{n}(G,U)\subset {}_{\cE}Z^n(G,U)
 \end{equation*}
 is the set of relative $n$-cocycles whose coboundary belongs to $S$. In particular,
 \begin{itemize}\itemsep=0pt
 \item for $S=\{\sigma\}$ we recover ${}_{\sigma}Z^{n}(G,U)
 =
 {}_{\{\sigma\}}Z^{n}(G,U)$;
 \item and we will be particularly interested in cohomology classes $c\in H^{n+1}(G,K)$, pulling back to subsets $\pi^{-1}(c)$ \big(or $\pi^{-1}c$\big) via
 \begin{equation*}
 Z^{n+1}(G,K)\xrightarrow{\quad\pi\quad}H^{n+1}(G,K).
 \end{equation*}
 In that case, the notation specializes to ${}_{\pi^{-1}c}Z^{n}(G,U)$.
 \end{itemize}
\end{Notation}

\begin{Remark}\label{re:nccobound}
 Although coboundaries are problematic for cochains valued in non-abelian groups, in the present constrained context it does make sense to denote \eqref{eq:nccobound} by $\partial u$ and we will do so.
\end{Remark}

The subsequent discussion requires some background on fiber bundles, whether general or (typically right) principal over a Banach Lie group. The former are recalled briefly in \cite[Section~IV.3]{lang-fund}, and are also the {\it fibrations} of \cite[Section~6.1]{bourb-vars-17}. {\it Principal} fiber bundles on the other hand are treated in \cite[Section~6.2]{bourb-vars-17}; this material is also cited in \cite[Section~III.1]{bourb-lie-13}, to which we appeal below.

Recall, in particular, the following construct, discussed for instance in \cite[Section~6.5]{bourb-vars-17} (assume all spaces to carry analytic Banach manifold structures, so the context for fibrations is that of~\cite[Section~6.1.1]{bourb-vars-17}):

\begin{Definition}\label{def:assocbdle}
 Let $\pi\colon P\to B$ be a right principal $G$-bundle and $F$ a space equipped with a~left $G$-action. The {\it fiber bundle associated} to this data is the quotient
 \begin{equation*}
 P\times^G F:=P\times F/{\sim},
 \end{equation*}
 where the equivalence relation `$\sim$' is defined by
 \begin{equation*}
 (p,f)\sim \big(pg,g^{-1}f\big),\qquad \forall g\in G,\quad p\in P,\quad f\in F.
 \end{equation*}
 As explained in \cite[Section~6.5.2]{bourb-vars-17}, this is indeed a fiber bundle in the sense of \cite[Section~6.1.1]{bourb-vars-17}.
\end{Definition}

In Theorem \ref{th:relcocfiber}, we denote by $\pi$ the surjection of cocycles onto cohomology, as in Notation~\ref{not:relsigma}.

\begin{Theorem}\label{th:relcocfiber}
 Let $G$ be a compact group acting on a central exact sequence \eqref{eq:gpext} of Banach Lie groups, and $n$ a positive integer that is arbitrary if $U$ is abelian and $1$ otherwise.
 \begin{enumerate}[$(1)$]\itemsep=0pt
 \item\label{item:relcocfiber1} We have a partition
 \begin{equation*}
 {}_{\cE}Z^n(G,U) = \coprod_{c\in H^{n+1}(G,K)}{}_{\pi^{-1}c} Z^n(G,U)
 \end{equation*}
 into disjoint open sets.
 \item\label{item:relcocfiber2} Fix a class $c\in H^{n+1}(G,K)$ and an individual cocycle $\sigma\in \pi^{-1}c\subset Z^{n+1}(G,K)$. If its domain is non-empty, the map
 \begin{equation}\label{eq:relznfibers}
 {}_{\pi^{-1}c}Z^n(G,U)\xrightarrow{\quad\partial\quad}\pi^{-1}c
 \end{equation}
 is an analytic-manifold fibration with fiber ${}_{\sigma}Z^n(G,U)$, obtained as the associated bundle
 \begin{equation*}
 C^{n}(G,K)\times^{Z^{n}(G,K)}{}_{\sigma}Z^n(G,U)
 \end{equation*}
 in the sense of Definition {\rm \ref{def:assocbdle}}, where
 \begin{itemize}\itemsep=0pt
 \item $Z^{n}(G,K)$ acts on ${}_{\sigma}Z^n(G,U)$ by multiplication;
 \item and $C^n(G,K)$ is regarded as a right principal $Z^n(G,K)$-bundle over
 \begin{align*}
 C^n(G,K)/Z^n(G,K) \xrightarrow[\cong]{\quad\partial\quad}
 B^{n+1}(G,K) &\cong B^{n+1}(G,K)\cdot \sigma\\
 &= \pi^{-1}c\subset Z^{n+1}(G,K)
 \end{align*}
 for some fixed $\sigma\in \pi^{-1}c$.
 \end{itemize}

 \item\label{item:relcocfiber3} For abelian $U$
 \begin{equation*}
 Z^n(G,U)
 \le
 {}_{\cE}Z^n(G,U)
 \end{equation*}
 is a Lie subgroup in the sense of {\rm\cite[\emph{Section}~III.1.3, \emph{Definition} 3]{bourb-lie-13}}, fitting into an exact sequence
 \begin{equation}\label{eq:zezim}
 \{1\}
 \to
 Z^n(G,U)
 \xrightarrow{\quad\phantom{\partial}\quad}
 {}_{\cE}Z^n(G,U)
 \xrightarrow{\quad\partial\quad}
 \im(\partial)
 \to
 \{1\}
 \end{equation}
 whose third term $\im(\partial)$ is an open subgroup of $Z^{n+1}(G,K)$.

 \end{enumerate}
\end{Theorem}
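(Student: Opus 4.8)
The plan is to deduce all three parts from the single analytic coboundary homomorphism $\partial\colon {}_{\cE}Z^n(G,U)\to Z^{n+1}(G,K)$ together with the multiplication action of the cochain group $C^n(G,K)$ on relative cocycles. First I would record the elementary computation—made uniform across the abelian and the non-abelian $n=1$ cases by the centrality of $K$, cf.\ \eqref{eq:nccobound}—that for $k\in C^n(G,K)$ and $u\in {}_{\cE}Z^n(G,U)$ one has $\partial(k\cdot u)=\partial k\cdot\partial u$. Thus $C^n(G,K)$ acts analytically by left multiplication, $\partial$ is equivariant for the $B^{n+1}(G,K)$-translation action on its target, and the subgroup $Z^n(G,K)=\ker\partial$ preserves each fiber ${}_{\sigma}Z^n(G,U)$. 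I would also establish, by the exponential-chart/averaging argument of Theorem \ref{th:cocyle-man-1} together with Lemma \ref{le:cansplit}, that each fiber ${}_{\sigma}Z^n(G,U)$ is an analytic submanifold of $C(G^n,U)$; this is the manifold with respect to which the fibration in part (\ref{item:relcocfiber2}) fibers.

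For part (\ref{item:relcocfiber1}), I would apply Theorem \ref{th:cocyle-man-n}(\ref{item:5}) to the abelian group $K$ to conclude that $H^{n+1}(G,K)$ is discrete, so that each class $\pi^{-1}c$ is clopen in $Z^{n+1}(G,K)$. Since $\partial$ is continuous, ${}_{\pi^{-1}c}Z^n(G,U)=\partial^{-1}(\pi^{-1}c)$ is then open, and these sets visibly partition ${}_{\cE}Z^n(G,U)$.

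For part (\ref{item:relcocfiber2}) the heart of the matter is an associated-bundle identification. I would first show that $\partial\colon C^n(G,K)\to B^{n+1}(G,K)$ is a principal $Z^n(G,K)$-bundle: it is a surjective homomorphism of abelian Banach Lie groups whose kernel $Z^n(G,K)$ is a split Lie subgroup by Theorem \ref{th:cocyle-man-n}(\ref{item:3}), and whose differential at the identity is $\delta^n\colon C^n(G,\fk)\to Z^{n+1}(G,\fk)$, a split surjection by Lemma \ref{le:cansplit}; in particular $Z^{n+1}(G,\fk)=B^{n+1}(G,\fk)$, so $\partial$ is a submersion (its differential at any point being a translate of $\delta^n$) and admits analytic local sections by the submersion theorem \cite[Section~5.9.1]{bourb-vars-17}, hence is locally trivial. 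Composing with the translation diffeomorphism $B^{n+1}(G,K)\xrightarrow{\,\cdot\,\sigma\,}\pi^{-1}c$ identifies the base of \eqref{eq:relznfibers}. Then the map $[k,u]\mapsto k\cdot u$ from $C^n(G,K)\times^{Z^n(G,K)}{}_{\sigma}Z^n(G,U)$ to ${}_{\pi^{-1}c}Z^n(G,U)$ is well defined (by the defining relation of the associated bundle), bijective (surjectivity of $\partial$ on $C^n(G,K)$ lets one translate any relative cocycle in $\pi^{-1}c$ back into the fiber ${}_{\sigma}Z^n(G,U)$), and an analytic diffeomorphism over the base by local triviality, under which $\partial$ becomes the bundle projection. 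The main obstacle I anticipate is precisely promoting the linear splitting of Lemma \ref{le:cansplit} to genuine analytic local sections at the group level and checking that $[k,u]\mapsto k\cdot u$ is an analytic diffeomorphism rather than merely a continuous bijection; everything else is formal once the fibers are known to be analytic submanifolds.

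For part (\ref{item:relcocfiber3}), with $U$ abelian, multiplicativity of $\partial$ shows that ${}_{\cE}Z^n(G,U)$ is a subgroup of $C^n(G,U)$ and that $\partial$ is an analytic homomorphism with kernel $Z^n(G,U)$. That kernel is a split Lie subgroup because, at the Lie-algebra level, $\delta^n$ carries ${}_{\cE}Z^n(G,\fu)$ onto $Z^{n+1}(G,\fk)=B^{n+1}(G,\fk)$ with kernel $Z^n(G,\fu)$, and a complement to $Z^n(G,\fu)$ in ${}_{\cE}Z^n(G,\fu)$ is furnished by the splitting of Lemma \ref{le:cansplit} together with the hypothesis that $\fk\subseteq\fu$ is complemented. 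Finally $\im(\partial)$ contains $B^{n+1}(G,K)$—since $C^n(G,K)\subseteq {}_{\cE}Z^n(G,U)$ already maps onto it—and is a subgroup, hence a union of cosets of $B^{n+1}(G,K)$, which is open because $H^{n+1}(G,K)$ is discrete; exactness of \eqref{eq:zezim} and its reading as an extension of Lie groups then follow from the submersion property of $\partial$.
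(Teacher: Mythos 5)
Your proposal is correct and follows essentially the same route as the paper: part (1) via the discreteness of $H^{n+1}(G,K)$ (which also gives openness of $\im(\partial)$ in part (3)), part (2) via the multiplication map from $C^{n}(G,K)\times^{Z^{n}(G,K)}{}_{\sigma}Z^n(G,U)$ onto ${}_{\pi^{-1}c}Z^n(G,U)$ made locally invertible by the submersion/local-section property of $\partial$ on $C^n(G,K)$, with the fiber's manifold structure obtained by rerunning the chart argument of Theorem \ref{th:cocyle-man-1}, and part (3) by the tautological kernel identification plus the submersion property from part (2). The only differences are expository: you make explicit the equivariance computation $\partial(k\cdot u)=\partial k\cdot\partial u$ and derive the submersion property directly from Lemma \ref{le:cansplit} at the Lie-algebra level, where the paper cites Theorem \ref{th:cocyle-man-n}\,(\ref{item:3}) (itself resting on that lemma).
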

\begin{proof}
(1) We have continuous maps
 \begin{equation*}
 {}_{\cE}Z^n(G,U)
 \xrightarrow{\quad\partial\quad}
 Z^{n+1}(G,K)
 \xrightarrow{\quad}
 H^{n+1}(G,K)
 \end{equation*}
 (for non-abelian $U$ too: Remark \ref{re:nccobound}), and the conclusion follows from the discreteness of this last space (see Theorem \ref{th:cocyle-man-n}\,(\ref{item:5})). The same argument (i.e., the discreteness of $H^{n+1}(G,K)$) also proves the openness of
 \begin{equation*}
 \im(\partial) \le Z^{n+1}(G,K)
 \end{equation*}
 in part (\ref{item:relcocfiber3}).

(2) Note first that we do have a map from one of the spaces to the other: the horizontal map in the commutative diagram
 \begin{equation}\label{eq:mapoverpi1c}
 \begin{tikzpicture}[auto,baseline=(current bounding box.center)]
 \path[anchor=base]
 (0,0) node (l) {$C^{n}(G,K)\times^{Z^{n}(G,K)}{}_{\sigma}Z^n(G,U)$}
 +(4,-1) node (d) {$\pi^{-1}c$}
 +(8,0) node (r) {${}_{\pi^{-1}c}Z^n(G,U)$,}
 ;
 \draw[->] (l) to[bend left=6] node[pos=.5,auto] {$\scriptstyle $} (r);
 \draw[->] (l) to[bend right=6] node[pos=.5,auto] {$\scriptstyle $} (d);
 \draw[->] (r) to[bend left=6] node[pos=.5,auto] {$\scriptstyle \partial$} (d);
 \end{tikzpicture}
 \end{equation}
 defined simply as multiplication of a $K$-valued 1-cochain and a relative $U$-valued 1-cocycle. The map
 \begin{align*}
 C^n(G,K)\ni x\xmapsto{\quad}\partial x\cdot c\in B^{n+1}(G,K) c = \pi^{-1}c
 \end{align*}
 is a submersion in the sense of \cite[Section~5.9.1]{bourb-vars-17} (e.g., by Theorem \ref{th:cocyle-man-n}\,(\ref{item:3})), so it splits locally on~$\pi^{-1}(c)$. This then implies that the rightward map in \eqref{eq:mapoverpi1c} is invertible locally on $\pi^{-1}c$.

 To conclude, the proof of Theorem \ref{th:cocyle-man-1}\,(\ref{item:cocyle-man-1}) (and Theorem \ref{th:cocyle-man-n}\,(\ref{item:3})) can be rerun virtually verbatim to show that ${}_{\sigma}Z^n(G,U)$ is a closed analytic submanifold of $C^n(G,U)$.

(3) When $U$ is abelian the three terms of \eqref{eq:zezim} are all groups under multiplication, and the kernel of $\partial$ is precisely $Z^n(G,U)$: this is just about tautological, given the definition of $\partial$. It follows that we indeed have a sequence \eqref{eq:zezim} of (plain, set-theoretic) groups. On the other hand, the fact that $\partial$ is a submersion follows from item (\ref{item:relcocfiber2}), proved previously.
 This completes the proof.
\end{proof}

Theorem \ref{th:relcocfiber}\,(\ref{item:relcocfiber2}) implies in particular that ${}_{\cE}Z^n(G,U)$ are analytic manifolds; their tangent spaces at individual relative cocycles are easy to describe along the lines of Theorem \ref{th:cocyle-man-1}\,(\ref{item:cocyle-man-1-tgz1}) (and Theorem \ref{th:cocyle-man-n}\,(\ref{item:4})), but we need a simple remark in the shape of Lemma \ref{le:equivact}.

The context for the lemma is flexible: it goes through for analytic, or smooth, or $C^r$ manifolds, topological spaces, plain sets, etc. Since it is a slight extension of the discussion in \cite[Section~5.3]{ser-gal} and a simple matter of computational verification, we omit the proof.

\begin{Lemma}\label{le:equivact}
 Let $U$ be a group fitting into a central exact sequence \eqref{eq:gpext} and acting on the left on a space $F$ so that $K\le U$ acts trivially. Let furthermore $G$ be a group operating on the entire structure:
 \begin{itemize}\itemsep=0pt
 \item on all of \eqref{eq:gpext} by automorphisms, via $ G\times U\ni (g,u)\xmapsto{\quad}{}^g u\in U$;
 \item and also on $F$ via $ G\times F\ni (g,f)\xmapsto{\quad}{}^g f\in F$;
 \item so that the action $U\times F\ni (u,f)\xmapsto{\quad} u\triangleright f \in F$
 is $G$-equivariant.
 \end{itemize}
 Given an $\cE$-relative $1$-cocycle $u=(u_g)_g\in {}_{\cE}Z^1(G,U)$,
 \begin{equation*}
 G\times F\ni (g,f)\xmapsto{\quad}
 {}^{g,u} f := u_g\triangleright {}^g f
 \in F
 \end{equation*}
 is again a $G$-action on $F$. \qedhere
\end{Lemma}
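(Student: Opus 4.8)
The plan is to verify directly that the assignment $(g,f)\mapsto {}^{g,u}f = u_g\triangleright {}^g f$ satisfies the two defining axioms of a left $G$-action: that the identity $e\in G$ acts as $\id_F$, and that ${}^{g,u}\big({}^{h,u}f\big) = {}^{gh,u}f$ for all $g,h\in G$ and $f\in F$. The whole argument rests on three ingredients already in hand: the $G$-equivariance of $\triangleright$, the relative-cocycle condition on $u$ (which holds only modulo $K$), and the triviality of the $K$-action on $F$.

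First I would dispose of the unit axiom. Since $u$ is an $\cE$-relative $1$-cocycle, its image in $U/K$ is a genuine $1$-cocycle; pushing the cocycle identity down to $U/K$ and setting $g=h=e$ forces $\bar u_e = 1$, i.e.\ $u_e\in K$. Because $K$ acts trivially on $F$ and $G$ acts on $F$ with ${}^e = \id_F$, this gives ${}^{e,u}f = u_e\triangleright {}^e f = u_e\triangleright f = f$.

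The computational heart is the composition axiom. Unwinding the definition and then applying $G$-equivariance of $\triangleright$ (in the form ${}^g(w\triangleright x) = ({}^g w)\triangleright({}^g x)$) yields
\[
 {}^{g,u}\big({}^{h,u}f\big) = u_g\triangleright {}^g\!\left(u_h\triangleright {}^h f\right) = u_g\triangleright \big(({}^g u_h)\triangleright {}^{gh} f\big) = \left(u_g\cdot {}^g u_h\right)\triangleright {}^{gh} f.
\]
It then remains to compare $u_g\cdot {}^g u_h$ with $u_{gh}$. By the relative-cocycle condition the element $\sigma(g,h) := u_{gh}\cdot(u_g\cdot {}^g u_h)^{-1}$ lies in $K$; since $K$ is central we may equally write $u_{gh} = \sigma(g,h)\cdot u_g\cdot {}^g u_h$. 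Substituting and using that $\sigma(g,h)\in K$ acts trivially on $F$ gives $u_{gh}\triangleright {}^{gh} f = (u_g\cdot {}^g u_h)\triangleright {}^{gh} f$, which is precisely ${}^{gh,u}f$.

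The only place requiring any care—and the reason the hypotheses are arranged as they are—is exactly this last step: the relative cocycle $u$ fails the strict cocycle identity by a $K$-valued correction $\sigma(g,h)$, and the argument closes only because that correction is absorbed by the triviality of the $K$-action on $F$, with centrality of $K$ ensuring the correction can be moved to the left of the product freely. Every other manipulation is formal bookkeeping with the two $G$-actions, which is why the statement is flagged as a routine verification and its proof omitted.
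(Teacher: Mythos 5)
Your verification is correct and is precisely the ``simple matter of computational verification'' that the paper has in mind when it omits the proof (referring to \cite[Section~5.3]{ser-gal}): the unit axiom via $u_e\in K$, equivariance of $\triangleright$ for the composition axiom, and absorption of the $K$-valued defect $\sigma(g,h)$ by the triviality of the $K$-action. One small correction: the rewriting $u_{gh}=\sigma(g,h)\cdot u_g\cdot{}^g u_h$ follows from the definition $\sigma(g,h)=u_{gh}\cdot\bigl(u_g\cdot{}^g u_h\bigr)^{-1}$ by right multiplication alone and does not use centrality; centrality (or at least normality) of $K$ is instead what makes the quotient $U/K$ and hence the notion of relative cocycle meaningful in the first place.
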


\begin{Remarks}\label{res:howserre}\quad
 \begin{enumerate}[(1)]\itemsep=0pt

 \item As in Theorem \ref{th:cocyle-man-1}\,(\ref{item:cocyle-man-1-tgz1}), we will be particularly interested in the case of a~Lie group $U$, acting adjointly on its Lie algebra $F:=\fu$. Clearly, in that case any central subgroup $K\le U$ will operate trivially on $\fu$ and the hypotheses of Lemma \ref{le:equivact} will be met.

 \item The aforementioned \cite[Section~5.3]{ser-gal} is slightly more restrictive than Lemma \ref{le:equivact} only in handling actual, plain cocycles (as opposed to relative ones). The difference is not essential to the verification, given the assumption that $K\le U$ operates trivially on $F$.
 \end{enumerate}
\end{Remarks}

To return to relative cocycles:

\begin{Theorem}\label{th:relcocfiber-tg}
 Assume the hypotheses of Theorem {\rm \ref{th:relcocfiber}}, and let $\fu$ and $\fk$ the Lie algebras of $U$ and $K$ respectively.
 \begin{enumerate}[$(1)$]\itemsep=0pt

 \item\label{item:relcocfiber-tg1} Fix a relative cocycle $u\in {}_{\cE}Z^1(G,U)$, with corresponding $2$-cocycle $\sigma:=\partial u\in Z^2(G,K)$. We have a decomposition
 \begin{equation}\label{eq:z1z2}
 T_{u}\big[{}_{\cE}Z^1(G,U)\big]
 \cong
 \rho_{u} Z^1(G,\fu)
 \oplus
 Z^2(G,\fk),
 \end{equation}
 where
 \begin{itemize}\itemsep=0pt
 \item $\rho_{u}$ indicates right translation of tangent vectors by ${u}\in C(G,U)$;
 \item in the first summand the action of $G$ on $\fu$ is the original one, twisted by the relative cocycle $u$, as in Lemma {\rm \ref{le:equivact}} applied to $F=\fu$;
 \item and the first summand is actually {\it equal} to the tangent space to the fiber ${}_{\cE}Z^1(G,U)$ above $\sigma$ in the fibration \eqref{eq:relznfibers}.
 \end{itemize}

 \item\label{item:relcocfiber-tg2} If $U$ is abelian, the Lie algebra of the Banach Lie group ${}_{\cE}Z^n(G,U)$ fits into an exact sequence
 \begin{equation*}
 \{0\}\to
 Z^n(G,\fu)
 \xrightarrow{\quad}
 \bullet
 \xrightarrow{\quad}
 Z^{n+1}(G,\fk)
 \to\{0\},
 \end{equation*}
 split as a sequence of Banach spaces.
 \end{enumerate}
\end{Theorem}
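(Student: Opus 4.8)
The plan is to obtain both parts by \emph{differentiating} structures already in hand: for part (\ref{item:relcocfiber-tg1}) the fibration \eqref{eq:relznfibers} of Theorem \ref{th:relcocfiber}\,(\ref{item:relcocfiber2}), and for part (\ref{item:relcocfiber-tg2}) the group extension \eqref{eq:zezim} of Theorem \ref{th:relcocfiber}\,(\ref{item:relcocfiber3}). In each case the resulting short exact sequence of tangent/Lie-algebra data will be split by Lemma \ref{le:cansplit} applied to the subgroup $K$, which is abelian since it is central.

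For part (\ref{item:relcocfiber-tg1}) I would work in the exponential chart from the proof of Theorem \ref{th:cocyle-man-1}: near $u$ a relative cochain reads $v_g=\exp(X_g)\cdot u_g$ with small $X_g\in\fu$, so $\rho_u$ identifies $T_u\big[{}_{\cE}Z^1(G,U)\big]$ with a subspace of $C^1(G,\fu)$. Repeating the $\operatorname{Ad}_{u_g}$-manipulation there, now only modulo $K$, one finds that the coboundary $\partial v(g,h)=v_{gh}\big(v_g\cdot{}^gv_h\big)^{-1}$ equals, to first order, $\exp\!\big(X_{gh}-X_g-g\triangleright X_h\big)\cdot\sigma(g,h)$, where `$\triangleright$' is the original action twisted by $u$ as in Lemma \ref{le:equivact} applied to $F=\fu$. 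Hence $v$ remains relative precisely when the twisted $2$-cochain $\delta_\triangleright X:=\big(X_{gh}-X_g-g\triangleright X_h\big)_{g,h}$ is $\fk$-valued, identifying $T_u\big[{}_{\cE}Z^1(G,U)\big]$ with $\rho_u\big\{X\in C^1(G,\fu):\delta_\triangleright X\in C^2(G,\fk)\big\}$. The differential of $\partial$ is then $X\mapsto\delta_\triangleright X$; its kernel is the twisted $Z^1(G,\fu)$, which is exactly the tangent space to the fiber ${}_{\sigma}Z^1(G,U)$ by rerunning Theorem \ref{th:cocyle-man-1}\,(\ref{item:cocyle-man-1-tgz1}) verbatim, giving the first summand. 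Because $K$ is central we have $\operatorname{Ad}_{u_g}|_{\fk}=\id$, so on $\fk$ the twisted action coincides with the original one; thus $\delta_\triangleright X$ lands in a genuine $Z^2(G,\fk)$, and Lemma \ref{le:cansplit} (for $\fk$) both exhibits every $\zeta\in Z^2(G,\fk)$ as $\delta_\triangleright b$ for some $b\in C^1(G,\fk)\subseteq C^1(G,\fu)$ — yielding surjectivity onto the second summand $Z^2(G,\fk)$ — and supplies the norm-controlled analytic section that splits \eqref{eq:z1z2}.

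For part (\ref{item:relcocfiber-tg2}) I would simply pass to Lie algebras in the exact sequence \eqref{eq:zezim}. Its kernel $Z^n(G,U)$ has Lie algebra $Z^n(G,\fu)$ by Theorem \ref{th:cocyle-man-n}\,(\ref{item:4}), while the third term $\im(\partial)$, being open in $Z^{n+1}(G,K)$, shares its Lie algebra $Z^{n+1}(G,\fk)$ (the same Theorem \ref{th:cocyle-man-n}\,(\ref{item:4}), now for $K$). This produces the advertised sequence, and Lemma \ref{le:cansplit} splits it as Banach spaces by lifting $Z^{n+1}(G,\fk)$ through $\delta^n\colon C^n(G,\fk)\to Z^{n+1}(G,\fk)$. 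The main obstacle throughout is the first-order bookkeeping in part (\ref{item:relcocfiber-tg1}): one must verify that the non-abelian coboundary linearizes to the twisted differential $\delta_\triangleright$ and, crucially, that its image is all of $Z^2(G,\fk)$ rather than merely $B^2(G,\fk)$ — a point secured by the centrality of $K$ (forcing twisted and untwisted to agree on $\fk$) together with the surjectivity in Lemma \ref{le:cansplit}, and consistent with the discreteness of $H^2(G,K)$, which makes $\pi^{-1}c$ open in $Z^2(G,K)$ and hence carry the full tangent space $Z^2(G,\fk)$.
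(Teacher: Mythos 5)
Your proposal is correct, but part (1) is argued along a genuinely different (more computational) route than the paper's. For part (2) the two arguments coincide: differentiate the exact sequence \eqref{eq:zezim} of Theorem \ref{th:relcocfiber}\,(\ref{item:relcocfiber3}), the paper disposing of this in one line. For part (1) you and the paper agree on the first summand --- both rerun the exponential-chart argument of Theorem \ref{th:cocyle-man-1}\,(\ref{item:cocyle-man-1-tgz1}) with the $u$-twisted action to get $T_u\big[{}_{\sigma}Z^1(G,U)\big]=\rho_u Z^1(G,\fu)$ --- but you obtain the complement $Z^2(G,\fk)$ and the splitting differently. The paper simply quotes the fibration \eqref{eq:relznfibers} of Theorem \ref{th:relcocfiber}\,(\ref{item:relcocfiber2}): since ${}_{\cE}Z^1(G,U)$ fibers over an open subset of $Z^2(G,K)$ (whose tangent space at $\sigma$ is $Z^2(G,\fk)$ by Theorem \ref{th:cocyle-man-n}\,(\ref{item:4}) applied to $K$) with fiber ${}_{\sigma}Z^1(G,U)$, its tangent space splits as vertical $\oplus$ horizontal, which is exactly \eqref{eq:z1z2}. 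You instead linearize the relative coboundary map directly, identify $T_u$ with $\rho_u\{X\in C^1(G,\fu): \delta_\triangleright X\in C^2(G,\fk)\}$, and produce the splitting by hand, using centrality of $K$ (so that twisted and untwisted actions agree on $\fk$, making the image land in, and in fact fill out, the genuine $Z^2(G,\fk)$) together with the bounded linear section from Lemma \ref{le:cansplit} applied to $\fk$. This amounts to unwinding, at the infinitesimal level, what the associated-bundle statement of Theorem \ref{th:relcocfiber}\,(\ref{item:relcocfiber2}) already encodes. What the paper's appeal to the fibration buys is brevity, and it makes the delicate ``first-order bookkeeping'' you flag unnecessary: in particular the reverse inclusion --- that every $X$ with $\delta_\triangleright X$ valued in $\fk$ really is a velocity vector --- comes for free from the local product structure, whereas in your setup it requires the decomposition $X=(X-b)+b$ with $X-b\in Z^1(G,\fu)$ tangent to the fiber and $b\in C^1(G,\fk)$ tangent to the $C^1(G,K)$-orbit through $u$. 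What your computation buys is self-containment: it isolates exactly where centrality and Lemma \ref{le:cansplit} enter, which the paper's softer argument leaves implicit.
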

\begin{proof}
 Part (\ref{item:relcocfiber-tg2}) follows immediately from Theorem \ref{th:relcocfiber}\,(\ref{item:relcocfiber3}).

 The analytic-manifold structure of ${}_{\sigma}Z^1(G,U)$ was already highlighted in the course of the proof of Theorem \ref{th:relcocfiber}, as a variant of Theorem \ref{th:cocyle-man-1}\,(\ref{item:cocyle-man-1}). That same argument, initially proving Theorem \ref{th:cocyle-man-1}\,(\ref{item:cocyle-man-1-tgz1}), also gives the analogous description of the relevant tangent space:
 \begin{equation}\label{eq:turelz1}
 T_u\big[{}_{\sigma}Z^1(G,U)\big] = \rho_{u} Z^1(G,\fu).
 \end{equation}
 To conclude, note that the splitting \eqref{eq:z1z2} follows from the fibration
 \begin{equation*}
 {}_{\cE}Z^n(G,U)\xrightarrow{\quad}\big(\text{open subspace of }Z^2(G,K)\big)
 \end{equation*}
 with fiber ${}_{\sigma}Z^n(G,U)$ of Theorem \ref{th:relcocfiber}\,(\ref{item:relcocfiber2}).
\end{proof}

\begin{Remark}\label{re:martinsame}
 After some processing to account for differences in language and presentation, \cite[Proposition 2.2]{mart-proj} amounts precisely to \eqref{eq:turelz1} for
 \begin{itemize}\itemsep=0pt
 \item the unitary group $U=U(A)$ of a $C^*$-algebra $A$;
 \item with the scalar subgroup $\bS^1\le U$ for our central $K$;
 \item and $G$ acting trivially on $U$.
 \end{itemize}
 The spaces $R(G,\sigma,A)$ of \cite[Section~1.4]{mart-proj} are the individual fibers ${}_{\sigma}Z^1(G,U)$ of the fibration(s)~\eqref{eq:relznfibers}. The remark made there, that two such spaces are homeomorphic for cohomologous cocycles, is also compatible with (and follows from) the description of that fibration as one associated to a principal bundle (see Theorem \ref{th:relcocfiber}\,(\ref{item:relcocfiber2})).
\end{Remark}

The ``analytically-coherent'' mutual conjugation of nearby cocycles of Theorem \ref{th:cocyle-man-1}\,(\ref{item:cocyle-man-1-coboundsplit}) and Theorem \ref{th:cocyle-man-n}\,(\ref{item:6}) goes through in the relative setting as well, with the requisite modifications. The following piece of notation will afford us a uniform statement, valid for either arbitrary $U$ and $n=1$ or abelian $U$ with arbitrary $n\in \bZ_{>0}$.

\begin{Notation}\label{not:actpart}
 Let $G$ be a group acting by automorphisms on another, $U$, and
 \begin{equation*}
 u\in C^n(G,U),\qquad v\in C^{n-1}(G,U)
 \end{equation*}
 two cochains with $n=1$ if $U$ is not abelian \big(in which case $C^{n-1}(G,U)=U$\big). We write
 \begin{equation*}
 v\triangleright_{\partial} u:=
 \begin{cases}
 u+\partial v, &\text{if $U$ is abelian},\\
 G\ni g\mapsto v\cdot u_g \cdot {}^g v^{-1}, &\text{otherwise}.
 \end{cases}
 \end{equation*}
\end{Notation}

\begin{Theorem}\label{th:canconjrel}
 Under the hypotheses of Theorem {\rm \ref{th:relcocfiber}}, every relative cocycle $u$ has a neighborhood $\cU$ in ${}_{\cE}Z^n(G,U)$ admitting an analytic map
 \begin{equation*}
 \cU\ni {u'}
 \xmapsto{\quad}
 (v_{u'},\ w_{u'})\in C^{n-1}(G,U)\times C^n(G,K)
 \end{equation*}
 such that
 \begin{equation*}
 (v_{u},w_{u})=(1,1)
 \qquad\text{and}\qquad
 v_{u'}\triangleright_{\partial} (u'\cdot w_{u'}) = u,\qquad \forall u'\in \cU.
 \end{equation*}
\end{Theorem}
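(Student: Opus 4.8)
The plan is to exploit the two commuting directions encoded in the acting group $C^{n-1}(G,U)\times C^n(G,K)$ and match them to the fibration of Theorem~\ref{th:relcocfiber}\,(\ref{item:relcocfiber2}): the second factor (multiplication by $K$-valued $n$-cochains) will slide a nearby relative cocycle $u'$ into the single fiber ${}_{\sigma}Z^n(G,U)$ over $\sigma:=\partial u$, and the first factor (the higher conjugation $\triangleright_\partial$ of Notation~\ref{not:actpart}) will then carry it back to $u$ within that fiber. Concretely, I would first construct an analytic correction $u'\mapsto w_{u'}$ of the coboundary, and then invoke the fiber version of Theorem~\ref{th:cocyle-man-1}\,(\ref{item:cocyle-man-1-coboundsplit}) / Theorem~\ref{th:cocyle-man-n}\,(\ref{item:6}) to produce $u'\mapsto v_{u'}$.

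For the first step I would shrink $\cU$ so that it meets only the open piece ${}_{\pi^{-1}c}Z^n(G,U)$ containing $u$, which is legitimate since these pieces are open and disjoint (Theorem~\ref{th:relcocfiber}\,(\ref{item:relcocfiber1})). Then $\partial u'$ and $\sigma$ lie in the same coset $B^{n+1}(G,K)\sigma$, so $\sigma\cdot(\partial u')^{-1}\in B^{n+1}(G,K)$. Feeding this into the local analytic section of $\partial\colon C^n(G,K)\to B^{n+1}(G,K)$ sending $1\mapsto 1$ (the submersion used in the proof of Theorem~\ref{th:relcocfiber}\,(\ref{item:relcocfiber2}), equivalently Theorem~\ref{th:cocyle-man-n}\,(\ref{item:6}) applied to $K$) yields an analytic $u'\mapsto w_{u'}\in C^n(G,K)$ with $\partial w_{u'}=\sigma\cdot(\partial u')^{-1}$ and $w_u=1$. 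What makes this work is that $K$ is central: a short computation with \eqref{eq:nccobound} gives $\partial(u'\cdot w_{u'})=\partial u'\cdot\partial w_{u'}=\sigma$, both for abelian $U$ and for the non-abelian coboundary, so $u'':=u'\cdot w_{u'}$ lands, analytically in $u'$, in the fiber ${}_{\sigma}Z^n(G,U)$.

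For the second step, recall from the proof of Theorem~\ref{th:relcocfiber}\,(\ref{item:relcocfiber2}) that ${}_{\sigma}Z^n(G,U)$ is itself a closed analytic submanifold, and that conjugation $v\triangleright_\partial(-)$ preserves the coboundary value $\sigma$ (again by centrality, the conjugate of the $K$-valued expression \eqref{eq:nccobound} equals itself), so the orbit map $v\mapsto v\triangleright_\partial u$ does land in that fiber. The arguments proving Theorem~\ref{th:cocyle-man-1}\,(\ref{item:cocyle-man-1-coboundsplit}) and Theorem~\ref{th:cocyle-man-n}\,(\ref{item:6}) then rerun verbatim on the fiber: the differential of this orbit map at the base point is the cochain differential $C^{n-1}(G,\fu)\to Z^n(G,\fu)=B^n(G,\fu)$, a split surjection by Lemma~\ref{le:cansplit} onto the tangent space $\rho_u Z^n(G,\fu)$ to the fiber (as in \eqref{eq:turelz1} and Theorem~\ref{th:cocyle-man-n}\,(\ref{item:4})), making it a submersion. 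This furnishes an analytic $u''\mapsto v_{u''}\in C^{n-1}(G,U)$ with $v_u=1$ and $v_{u''}\triangleright_\partial u''=u$. Composing with the analytic map $u'\mapsto u''$ from the first step and setting $v_{u'}:=v_{u''}$ gives $v_{u'}\triangleright_\partial(u'\cdot w_{u'})=u$ together with $(v_u,w_u)=(1,1)$, as required.

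The part that needs the most care — the ``hard part'' — is the centrality bookkeeping that keeps the two factors from interfering: multiplication by $w$ shifts the coboundary by the exact $K$-cocycle $\partial w$, while the subsequent conjugation fixes $\sigma$, so the two steps are genuinely independent and compose cleanly. Equivalently, one may package both steps into the single observation that the action map $C^{n-1}(G,U)\times C^n(G,K)\to{}_{\cE}Z^n(G,U)$ is a submersion at the identity, its differential surjecting onto the two summands $\rho_u Z^n(G,\fu)$ and $Z^{n+1}(G,\fk)$ of the tangent decomposition (cf. \eqref{eq:z1z2} and Theorem~\ref{th:relcocfiber-tg}\,(\ref{item:relcocfiber-tg2})), where one uses $Z^{n+1}(G,\fk)=B^{n+1}(G,\fk)$ for $\fk$-valued cochains (Lemma~\ref{le:cansplit}); the stated cross section is then the inverse of a local section of this submersion, analytic because inversion in the acting group is analytic.
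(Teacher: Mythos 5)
Your proposal is correct and follows essentially the same route as the paper's proof: first use Theorem~\ref{th:cocyle-man-n}\,(\ref{item:6}) (applied to $K$-valued cocycles near $\sigma=\partial u$) to produce the analytic correction $w_{u'}$ moving $u'$ into the fiber ${}_{\sigma}Z^n(G,U)$, then rerun the argument of Theorem~\ref{th:cocyle-man-1}\,(\ref{item:cocyle-man-1-coboundsplit}) on that fiber to produce $v_{u'}$, and compose after shrinking neighborhoods. Your extra centrality bookkeeping and the closing submersion repackaging are consistent elaborations of details the paper leaves implicit, not a different method.
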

\begin{proof}
 This essentially stitches together two previous results. Set $\sigma:=\partial u\in Z^{n+1}(G,K)$.
 Relative cocycles $u'$ sufficiently close to $u$ will have differentials close to $\sigma$, whence, by Theorem~\ref{th:cocyle-man-n}\,(\ref{item:6}), an analytic map
 \begin{equation*}
 \cU\ni u'\mapsto w_{u'}\in C^n(G,K)
 \end{equation*}
 with
 \begin{equation}\label{eq:wfirst}
 w_{u}=1
 \qquad\text{and}\qquad
 u'\cdot w_{u'} \in {}_{\sigma}Z^n(G,U),\qquad \forall u'\in \cU
 \end{equation}
 for some small neighborhood $\cU\ni u$ in ${}_{\cE}Z^n(G,U)$.

 We can now focus exclusively on the fiber ${}_{\sigma}Z^n(G,U) = \partial^{-1}\sigma$ of \eqref{eq:relznfibers}. Essentially the same proof as that of Theorem \ref{th:cocyle-man-1}\,(\ref{item:cocyle-man-1-coboundsplit}) (and its abelian counterpart Theorem \ref{th:cocyle-man-n}\,(\ref{item:6})) will then provide an analytic map
 \begin{equation*}
 \cU_0\ni u'\mapsto v_{u'}\in C^{n-1}(G,U)
 \end{equation*}
 with
 \begin{equation*}
 v_{u}=1
 \qquad\text{and}\qquad
 v_{u'}\triangleright_{\partial} u' = u,\qquad \forall u'\in \cU_0.
 \end{equation*}
 for a neighborhood $\cU_0\ni u$ in ${}_{\sigma}Z^n(G,U)$.

 Shrinking the initial $\cU$ so that all $u'\cdot w_{u'}$ in \eqref{eq:wfirst} belong to $\cU_0$, we have our conclusion.
\end{proof}

It is worth recording a consequence (of the proof) pertaining to a single fiber ${}_{\sigma}Z^n(G,U)$ of the fibration \eqref{eq:relznfibers}.

\begin{Corollary}\label{cor:orbisfibrel}
 Under the hypotheses of Theorem {\rm \ref{th:relcocfiber}}, fix $\sigma\in Z^{n+1}(G,K)$.
 \begin{enumerate}[$(1)$]\itemsep=0pt
 \item The orbits in ${}_{\sigma}Z^n(G,U)$ under the action $\triangleright_{\partial}$ of $C^{n-1}(G,U)$ are open.

 \item For each element $u\in {}_{\sigma}Z^n(G,U)$ the corresponding orbit map
 \begin{equation*}
 C^{n-1}(G,U)\ni v\xmapsto{\quad\mathrm{orb}_u}v\triangleright_{\partial}u\in {}_{\sigma}Z^n(G,U)
 \end{equation*}
 is a principal fibration over the orbit of $u$.

 \item\label{item:orbisfib} The isotropy group
 \begin{equation*}
 \mathrm{Fix}_u:=\big\{v\in C^{n-1}(G,U)\mid v\triangleright_{\partial}u = u\big\}\le C^{n-1}(G,U)
 \end{equation*}
 is a closed Lie subgroup.
 \end{enumerate}
\end{Corollary}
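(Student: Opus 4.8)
The whole corollary is meant to be read off the construction already carried out in the proof of Theorem \ref{th:canconjrel}, now restricted to the single fiber ${}_{\sigma}Z^n(G,U)=\partial^{-1}\sigma$. First I would confirm that $\triangleright_{\partial}$ really is a left action of $C^{n-1}(G,U)$ on this fiber: in the abelian case $v\triangleright_{\partial}u=u+\partial v$ is the affine action of the additive group (immediate from additivity of $\partial$), and in the non-abelian case $v\triangleright_{\partial}u=(g\mapsto v\,u_g\,{}^gv^{-1})$ is twisted conjugation, verified in one line; in both cases $\triangleright_{\partial}$ preserves $\partial u=\sigma$, so it indeed acts on the fiber. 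The essential analytic input is then the differential of the orbit map $\mathrm{orb}_u$ at $1\in C^{n-1}(G,U)$: exactly as in the proofs of Theorem \ref{th:cocyle-man-1}\,(\ref{item:cocyle-man-1-coboundsplit}) and Theorem \ref{th:cocyle-man-n}\,(\ref{item:6}), this differential is, up to the harmless inversion $v\leftrightarrow v^{-1}$, the Lie-algebra coboundary $C^{n-1}(G,\fu)\to Z^n(G,\fu)$ for the twisted action of Lemma \ref{le:equivact}. By Lemma \ref{le:cansplit} that coboundary is a split surjection onto $Z^n(G,\fu)=T_u\big[{}_{\sigma}Z^n(G,U)\big]$ (cf.\ \eqref{eq:turelz1}), so $\mathrm{orb}_u$ is a submersion at $1$ in the sense of \cite[Section~5.9.1]{bourb-vars-17}, and hence a submersion everywhere by homogeneity of the action.

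Parts (1) and (3) fall out directly. Since submersions are open maps, the image $\mathrm{orb}_u\big(C^{n-1}(G,U)\big)$, i.e.\ the orbit of $u$, is open in ${}_{\sigma}Z^n(G,U)$; this is part (1). For part (3), $\mathrm{Fix}_u=\mathrm{orb}_u^{-1}(u)$ is the fiber of a submersion over the point $u$, hence a closed submanifold whose tangent space splits, and being simultaneously a subgroup of $C^{n-1}(G,U)$ it is a closed Lie subgroup in the sense of Convention \ref{cv:qlie}. (Concretely, in the abelian case $\mathrm{Fix}_u=\ker\partial=Z^{n-1}(G,U)$.)

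For part (2) I would note that $\mathrm{orb}_u(v_1)=\mathrm{orb}_u(v_2)$ holds precisely when $v_2^{-1}v_1\in\mathrm{Fix}_u$, so the fibers of $\mathrm{orb}_u$ are exactly the right cosets $v_0\,\mathrm{Fix}_u$, on which $\mathrm{Fix}_u$ acts freely by right translation. A submersion admits local sections (\cite[Section~5.9.1\,(iv)]{bourb-vars-17}); concretely the map $u'\mapsto v_{u'}^{-1}$, where $u'\mapsto v_{u'}$ is the analytic map on the fiber furnished by the proof of Theorem \ref{th:canconjrel} (satisfying $v_{u'}\triangleright_{\partial}u'=u$ and $v_u=1$), is such a section near $u$, and translating it by group elements produces local sections over every orbit point. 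Local triviality together with the free right $\mathrm{Fix}_u$-action on the fibers — now that part (3) guarantees $\mathrm{Fix}_u$ is a Lie group — exhibits $\mathrm{orb}_u$ as a principal $\mathrm{Fix}_u$-bundle over the open orbit, which is part (2).

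The only step requiring genuine care — the main obstacle — is the submersion claim of the first paragraph, namely that $d(\mathrm{orb}_u)_1$ is a \emph{split} surjection onto the \emph{full} tangent space $T_u\big[{}_{\sigma}Z^n(G,U)\big]$. Everything after that is formal. But this is precisely what the norm-controlled splitting of Lemma \ref{le:cansplit} already delivered in the proofs of Theorems \ref{th:cocyle-man-1} and \ref{th:cocyle-man-n}; the only thing to check is that passing from the full relative cocycle space to the fixed-coboundary fiber ${}_{\sigma}Z^n(G,U)$ leaves the computation intact, which it does, since $\triangleright_{\partial}$ fixes $\partial u=\sigma$ and the tangent identification \eqref{eq:turelz1} is already stated on that fiber.
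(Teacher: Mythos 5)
Your proof is correct and follows essentially the paper's own route: both hinge on the orbit map being a surjective submersion with analytic local sections, extracted from the proof of Theorem~\ref{th:canconjrel} (and ultimately from the split surjectivity of the coboundary in Lemma~\ref{le:cansplit}, via the tangent-space identification \eqref{eq:turelz1}). The only difference is one of packaging: where you verify the formal consequences by hand (open image, fibers of $\mathrm{orb}_u$ as cosets of $\mathrm{Fix}_u$, local trivialization from the section $u'\mapsto v_{u'}^{-1}$, and $\mathrm{Fix}_u$ as the fiber of a submersion), the paper simply cites \cite[Section~III.1.6, Proposition~12]{bourb-lie-13} to conclude the principal-fibration and closed-Lie-subgroup statements at once.
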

\begin{proof}

 The openness of the orbits was noted in the course of the proof of Theorem \ref{th:canconjrel}, and the local splitting of the orbit map ensures that the orbit map is a surjective submersion. The conclusion then follows from \cite[Section~III.1.6, Proposition 12]{bourb-lie-13}.
\end{proof}

In particular, specializing Corollary \ref{cor:orbisfibrel} to trivial actions and trivial $K\le U$, we obtain

\begin{Corollary}\label{cor:centcpct}
 The centralizer of a compact subgroup in a Banach Lie group is a closed Lie subgroup. \qedhere
\end{Corollary}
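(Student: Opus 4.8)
The plan is to realize the centralizer as a special case of the isotropy group $\mathrm{Fix}_u$ appearing in Corollary \ref{cor:orbisfibrel}\,(\ref{item:orbisfib}), by choosing the data in Theorem \ref{th:relcocfiber} so that the action $\triangleright_{\partial}$ degenerates to ordinary conjugation. Concretely, let $U$ be the ambient Banach Lie group and $H\le U$ the compact subgroup whose centralizer $C_U(H)=\{v\in U\mid vhv^{-1}=h,\ \forall h\in H\}$ we wish to study. I would apply the whole apparatus with $G:=H$ (compact, as required), with trivial central subgroup $K:=\{1\}$---so that the short exact sequence $\cE$ becomes the trivial one and there are no genuine ``relative'' complications---and with $G$ acting trivially on $U$. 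The cohomological degree is $n=1$, since $U$ need not be abelian.

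First I would pin down what the relevant objects specialize to. With $K=\{1\}$ we have $Z^{2}(G,K)=\{1\}$, so the only available coboundary is $\sigma=1$ and the fiber ${}_{\sigma}Z^{1}(G,U)$ of \eqref{eq:relznfibers} is simply $Z^{1}(H,U)$. Because the $G$-action is trivial, the cocycle condition \eqref{eq:cocyccond} reads $u_{gh}=u_gu_h$, so $Z^{1}(H,U)$ is exactly the space of continuous homomorphisms $H\to U$. Within it I would single out the inclusion cocycle $u$, defined by $u_g:=g$ for $g\in H\le U$; this is continuous and is a homomorphism, hence a legitimate element of $Z^{1}(H,U)$. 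Next, $C^{n-1}(G,U)=C^{0}(H,U)=U$ (maps from the one-point space $H^{0}$), and for $v\in U$ the action of Notation \ref{not:actpart} becomes
\begin{equation*}
 v\triangleright_{\partial}u\colon\quad g\mapsto v\cdot u_g\cdot {}^{g}v^{-1}=v\,g\,v^{-1},
\end{equation*}
the triviality of the $G$-action collapsing ${}^{g}v^{-1}$ to $v^{-1}$. Thus $\triangleright_{\partial}$ is literally conjugation of the homomorphism $u$ by elements of $U$.

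With these identifications in hand the conclusion is immediate: the isotropy group is
\begin{equation*}
 \mathrm{Fix}_u=\big\{v\in U\mid v\,g\,v^{-1}=g,\ \forall g\in H\big\}=C_U(H),
\end{equation*}
and Corollary \ref{cor:orbisfibrel}\,(\ref{item:orbisfib}) asserts precisely that this is a closed Lie subgroup. I do not expect a genuine obstacle here---the content is entirely in the earlier fibration results---so the only point demanding care is the bookkeeping of the specialization: one must take $u$ to be the identity inclusion (so that the values $u_g$ exhaust all of $H$) in order that the fixed-point condition $v\triangleright_{\partial}u=u$ be equivalent to $v$ centralizing the whole of $H$, rather than merely some homomorphic image. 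Verifying that the hypotheses of Theorem \ref{th:relcocfiber} are met under this trivial specialization is routine.
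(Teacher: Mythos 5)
Your proposal is correct and is precisely the paper's own argument: the paper obtains this corollary by ``specializing Corollary \ref{cor:orbisfibrel} to trivial actions and trivial $K\le U$,'' which is exactly the specialization you carry out, with the inclusion cocycle $u_g:=g$ so that $\mathrm{Fix}_u=C_U(H)$ and Corollary \ref{cor:orbisfibrel}\,(\ref{item:orbisfib}) gives the closed-Lie-subgroup conclusion. Your write-up simply makes explicit the bookkeeping the paper leaves implicit.
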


\begin{Remark}
 See \cite[Section~VI.3\,(A)]{om-inflie-bk} for a version of Corollary \ref{cor:centcpct} for groups of diffeomorphisms of smooth, compact manifolds (in place of our $U$); such groups are Lie in a broader sense, being modeled on more general topological (non-Banach) vector spaces.
\end{Remark}

When the ambient Banach Lie group $U$ is in fact finite-dimensional, any centralizer $G$ (of any subset) is a closed subgroup and hence a Lie subgroup \cite[Section~III.8.2, Theorem 2]{bourb-lie-13}. This latter result is no longer valid for infinite-dimensional Lie groups (i.e., closed no longer implies Lie):
\begin{itemize}\itemsep=0pt
\item As noted in Section~\ref{subse:banmf}, \cite[Section~III.1.3, Definition 3]{bourb-lie-13} requires (via \cite[Section~5.8.3]{bourb-vars-17}) that the embedding $\fg\le \fu$ of Lie algebras attached to a Lie-subgroup embedding $G\le U$ split. Consider a non-split closed linear embedding $F\le E$ of Banach spaces (e.g., the space $c_0$ of real-valued sequences converging to 0, not complemented in the space $\ell^{\infty}$ of bounded real-valued sequences \cite[Example 14-4-9]{wil-tvs}). The subgroup
 \begin{gather*}
 (F,+)\subset (E,+)
 \end{gather*}
 is then only quasi-Lie, but not Lie (see Convention \ref{cv:qlie}).
\item Worse still, \cite[Remark IV.4 (d)]{neeb-inf} gives an example of an additive closed subgroup $G\le U$ of a Hilbert space whose underlying ``Lie algebra candidate''~\cite[Corollary IV.3]{neeb-inf}
 \begin{equation*}
 \fg:=\{x\in \fu\mid \exp(\bR x)\subseteq \fu\}
 \end{equation*}
is trivial, but which is nevertheless not discrete.

 In this case, the closed subgroup in question is not even quasi-Lie.
\end{itemize}

The first item above can be adapted to give a counterexample to Corollary~\ref{cor:centcpct} in the absence of both compactness and finite-dimensionality.

\begin{Example}\label{ex:notlie}
 Consider a non-split closed embedding $F\le E$ of Banach spaces, regarded as additive Lie groups. This then gives a Lie-group morphism $E\xrightarrow{\quad\pi\quad} E/F$ whose kernel is not a Lie subgroup.

 Next, note that
 \begin{equation*}
 E\oplus E/F\ni (x,y)\xmapsto{\quad\alpha\quad}(x,\pi(x)+y)\in E\oplus E/F
 \end{equation*}
 is an {\it auto}morphism whose fixed-point set $F\oplus E/F$ is not Lie. Finally, for the semidirect product
 \begin{equation*}
 G:=\left(E\oplus E/F\right)\rtimes \bZ
 \end{equation*}
 with a generator $\sigma\in \bZ$ acting via $\alpha$. The centralizer (and normalizer) $(F\oplus E/F)\times \bZ\subset G$ of~$\bZ=\langle\sigma\rangle$ is then not a Lie subgroup in the sense of \cite[Section~III.1.3, Definition 3]{bourb-lie-13}.
\end{Example}

\begin{Remark}\label{re:centnotbad}
 Counterexamples to Corollary \ref{cor:centcpct} cannot be much more pathological than Example \ref{ex:notlie}: for any Banach Lie group $U$, the fixed-point set $U^{\alpha}$ of an automorphism ${\alpha\in \mathrm{Aut}(U)}$ is always at least quasi-Lie.

 This is easily seen from the fact that $\alpha$ induces an automorphism of the Lie algebra ${\fu\!:=\!\operatorname{Lie}(U)}$ so that the exponential map $\exp\colon\fu\to U$ is equivariant, so a small identity neighborhood of~$1\in U^{\alpha}$ can be identified via the exponential map with a small neighborhood of $0$ in the Lie algebra $\fu^{\alpha}$ of $\alpha$-invariant vectors.

 All of this goes through for arbitrary automorphism groups of $U$ in place of the single $\alpha$: the fixed-point set $U^H\le U$ is a closed quasi-Lie subgroup with Lie algebra $\fu^H$. See also \cite[Proposition 5.54\,(ii)]{hm4}, which states this for {\it linear} Lie groups.
\end{Remark}

A supplement to Corollary \ref{cor:centcpct}:

\begin{Proposition}\label{pr:normcpct}
 The normalizer $N_U(G)\le U$ of a compact subgroup $G\le U$ in a Banach Lie group is a closed Lie subgroup.
\end{Proposition}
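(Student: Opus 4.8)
The plan is to assemble $N:=N_U(G)$ out of two pieces that are already under control: the centralizer $C:=C_U(G)$, which is a closed Lie subgroup by Corollary \ref{cor:centcpct}, and the compact group $G$ itself. First I would check that $N$ is closed in $U$: since $G$ is compact (hence closed) and conjugation is continuous, both $\{u:uGu^{-1}\subseteq G\}$ and $\{u:u^{-1}Gu\subseteq G\}$ are closed, and $N$ is their intersection. Because $G$ and $C$ are both normal in $N$, with $C$ closed and $G$ compact, the product $M:=G\cdot C$ is again a subgroup, and it is closed (a compact set times a closed set is closed in any topological group). Note $G\cap C=Z(G)$.

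The core of the argument is to exhibit $M$ as a closed Lie subgroup of $U$ and then to recognize it as an \emph{open} subgroup of $N$. For the former, the multiplication map $G\times C\to U$ has differential $(X,Y)\mapsto X+Y$ from $\fg\oplus\fc$ to $\fu$ at the identity; here $\fc=\fu^G$ is complemented in $\fu$ by the Haar-averaging projection $X\mapsto\int_G\operatorname{Ad}(g)X\,\mathrm d\mu(g)$ (the same device that produces the splitting behind Corollary \ref{cor:centcpct}), while $\fg=\operatorname{Lie}(G)$ is finite-dimensional; hence $\fg+\fc$ is closed and complemented in $\fu$, and an exponential chart exhibits $M$ as a Lie subgroup with $T_1M=\fg+\fc$. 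For the latter, conjugation gives a continuous homomorphism $\Phi\colon N\to\Aut(G)$, $u\mapsto\bigl(g\mapsto ugu^{-1}\bigr)$, with $\ker\Phi=C$ and $\Phi^{-1}(\operatorname{Inn}G)=M$; since $\Aut(G)$ is a Lie group whose identity component consists of inner automorphisms, $\Phi^{-1}\bigl(\Aut(G)^0\bigr)$ is an open neighborhood of $1$ contained in $M$, so $M$ is open in $N$.

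Granting this, $N$ is a disjoint union of translates of the open Lie subgroup $M$, so it inherits a Lie-subgroup structure with $T_1N=\fg+\fc$ splitting in $\fu$; together with closedness, that is precisely the claim. The main obstacle is the fact that legitimizes the middle paragraph for an \emph{arbitrary} compact $G$: one needs to know that the compact subgroup $G$ is in fact a finite-dimensional Lie group, so that $\fg$ is finite-dimensional and $\Aut(G)$ is a Lie group with open identity component inside $\operatorname{Inn}G$. Concretely, the delicate point is the rigidity statement that any automorphism of $G$ induced by a conjugation $u$ sufficiently close to $1\in U$ is already inner — realized by an element of $G$ near the identity — which is exactly what forces $N\cap V\subseteq M$ for a small neighborhood $V$ of $1$. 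This absence of nearby outer automorphisms is the group-level counterpart of the cohomological discreteness used in Theorem \ref{th:cocyle-man-1}\,(\ref{item:cocyle-man-1-h1disc}) and, as in Remark \ref{re:centnotbad}, reflects the finiteness of $\pi_0(\Aut G)$; carrying it out carefully — or, alternatively, invoking that compact subgroups of Banach Lie groups are Lie — is the step demanding the most care.
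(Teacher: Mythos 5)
Your outer steps are sound, and they diverge from the paper's in an interesting way. The paper restricts attention to the identity component $N_0$ of $N:=N_U(G)$ and invokes Iwasawa's theorem \cite[Theorem~2]{iw} to get the exact decomposition $N_0=CG$ (with $C$ the centralizer in $N_0$), whereas you keep the full normalizer and obtain openness of $M=G\cdot C_U(G)$ in $N$ from the continuous homomorphism $\Phi\colon N\to\Aut(G)$, the identity $\Phi^{-1}(\operatorname{Inn}G)=M$, and the rigidity fact $\Aut(G)^0\subseteq\operatorname{Inn}(G)$ for a compact Lie group $G$. That argument is correct (granting, as both proofs must, that a compact subgroup of a Banach Lie group is a finite-dimensional Lie subgroup, \cite[Theorem~IV.3.16]{neeb-lc}, which the paper cites and you flag), and it has the merit of making explicit the component-separation point that the paper's opening reduction ``it is enough to treat $N_0$'' leaves implicit: a subgroup is a submanifold iff it admits an adapted chart at $1$, so one really does need $M$ (or $N_0$) to be open in $N$. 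One slip in your closing remarks: what is needed and true is that $\operatorname{Inn}(G)$ is open in $\Aut(G)$, i.e., $\Aut(G)^0\subseteq\operatorname{Inn}(G)$, not finiteness of $\pi_0(\Aut G)$ --- for a torus, $\operatorname{Out}(G)\cong\mathrm{GL}(n,\bZ)$ is infinite (though discrete).

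The genuine gap is the middle step, which you dispose of in one clause: ``an exponential chart exhibits $M$ as a Lie subgroup with $T_1M=\fg+\fc$.'' Knowing that the multiplication map $G\times C\to U$ has differential $(X,Y)\mapsto X+Y$ with closed, complemented image $\fg+\fc$ does not by itself give the image a submanifold structure: that map is neither injective nor an immersion, since the kernel of its differential is the anti-diagonal copy of $\fg\cap\fc$ (nontrivial whenever $Z(G)$ has positive dimension), so the image of a small product neighborhood is not, a priori, a chart neighborhood of $1$ in $M$. Two further points must be established. First, that $\operatorname{Lie}(G\cap C)=\fg\cap\fc$, so that the kernel of the differential is exactly the tangent space to the fiber of multiplication; the paper gets this from the $\operatorname{Ad}$-fixed-point description of tangent algebras in Remark~\ref{re:centnotbad}, and it is what makes the induced map $(C\times G)/(C\cap G)\to U$ an \emph{immersion}. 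Second --- and this is where compactness of $G$ enters a second time --- that every element of $M$ sufficiently close to $1$ actually factors as (element of $G$ near $1$)$\cdot$(element of $C$ near $1$): given $g_\alpha c_\alpha\to 1$, compactness lets $g_\alpha$ subconverge in $G$, hence $c_\alpha$ converges in the closed set $C$, the limits lie in $G\cap C$, and one can renormalize; without this, the locally embedded piece parametrized by a complement of $\fg\cap\fc$ in $\fg$ times $\fc$ could fail to fill out a neighborhood of $1$ in $M$. The paper packages both points into its immersion-onto-a-closed-subgroup argument. With (i) and (ii) supplied, your proof goes through; as written, the chart claim asserts the conclusion rather than proving it.
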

\begin{proof}
 It will be enough to prove this of the identity component $N_0$ of $N:=N_U(G)$. Set $C:=C_U(G)\cap N_0$,
 i.e., the centralizer of $G$ in $N_0$. Since the entire centralizer $C_U(G)$ is contained in $N$ to begin with, and is a Lie subgroup of $G$ by Corollary \ref{cor:centcpct}, $C\le G$ too is Lie. Furthermore, we have~$N_0=CG$ by \cite[Theorem 2]{iw}, so that $N_0$ is the image of
 \begin{equation}\label{eq:multmap}
 C\times G\ni (c,g)\xmapsto{\quad} cg\in U.
 \end{equation}
 $G \le U$ is a finite-dimensional Lie subgroup \cite[Theorem IV.3.16]{neeb-lc}, so the domain of \eqref{eq:multmap} is a Lie group and that morphism is plainly analytic. Furthermore, its differential is simply the addition map
 \begin{equation*}
 T_1C\times T_1G\xrightarrow{\quad + \quad}T_1U,
 \end{equation*}
 and hence its image is the space
 \begin{equation}\label{eq:plusmap}
 T_1C + T_1G\le T_1U\quad\text{(the sum need not be direct)}.
 \end{equation}
 That subspace is
 \begin{itemize}\itemsep=0pt
 \item closed, being the sum of the closed subspace $T_1C\le T_1U$ and the finite-dimensional $T_1G$;
 \item and complemented, as follows from the fact that $T_1C$ is complemented (see Corollary~\ref{cor:centcpct} again) and that finite-dimensional subspaces are always complemented \cite[Theorem \mbox{7-3-6}]{wil-tvs}.
 \end{itemize}
 The kernel of the addition map \eqref{eq:plusmap} can be identified with the intersection $T_1C\cap T_1G$ via
 \begin{equation*}
 T_1C\cap T_1G\ni v\xmapsto{\quad} (v,-v)\in T_1C\oplus T_1G.
 \end{equation*}
 That intersection is also the Lie algebra of the Lie group $C\cap G = \text{kernel of \eqref{eq:multmap}}$,
 also regarded as a subgroup of $C\times G$ by the corresponding ``anti-diagonal'' morphism
 \begin{equation*}
 C\cap G\ni g\xmapsto{\quad} \big(g,g^{-1}\big)\in C\times G
 \end{equation*}
 (a morphism, since $C\cap G$ is abelian). Indeed, the argument of Remark \ref{re:centnotbad} shows that
 \begin{equation*}
 T_1C = (T_1U)^{\operatorname{Ad}(G)}
 \end{equation*}
 (fixed points under the adjoint action), while by the same token
 \begin{equation*}
 T_1(C\cap G) = (T_1G)^{\operatorname{Ad}(G)} = (T_1U)^{\operatorname{Ad}(G)}\cap T_1G = T_1C\cap T_1G.
 \end{equation*}
 In conclusion:
 \begin{itemize}\itemsep=0pt
 \item the differential of the morphism \eqref{eq:multmap} vanishes precisely along the tangent space of its kernel,
 \item and hence that map induces an immersion $(C\times G)/C\cap G\to U$ (the quotient of \cite[Section~III.1.6, Proposition 11\,(ii)]{bourb-lie-13}, say) onto a closed subgroup ${CG\le U}$,
 \item so that the image of the differential is a complemented subspace of $T_1U$.
 \end{itemize}
 This is sufficient to give $CG=N_0\le U$ its Lie-subgroup structure, finishing the proof.
\end{proof}

\begin{Remark}\label{re:isothmbourb}
 The argument employed in the proof of Proposition \ref{pr:normcpct} is by no means unfamiliar: \cite[Section~III.3.8, Proposition 30]{bourb-lie-13}, for instance, similarly considers the multiplication map $G_0\times G_1\to U$ for Lie subgroups $G_i\le U$ that mutually centralize each other, its differential, etc. The difference there is that the ambient group $U$ is assumed finite-dimensional, whereas in Proposition \ref{pr:normcpct} the compactness of one of the groups $G_i$ makes up for the infinite dimensionality.
\end{Remark}

\section{Applications to operator-algebra morphisms}\label{se:oaapp}

\subsection{Manifolds of morphisms}\label{subse:manmor}

The material above affords a quick proof for the following result, concerning spaces of Banach- or $C^*$-algebra morphisms.

\begin{Theorem}\label{th:castmorcross}
 Let $A$ and $B$ be two unital $C^*$-algebras, with $A$ finite-dimensional.
 \begin{enumerate}[$(1)$]\itemsep=0pt
 \item The space $C^*(A,B)\subset L(A,B)$ is a closed submanifold.
 \item\label{item:a2brefreport} The orbits therein under conjugation by the unitary group $U(B)$ are all open in $C^*(A,B)$.
 \item\label{item:castcross} For every $\varphi\in C^*(A,B)$ there is a neighborhood $\cU\subset C^*(A,B)$ of $\varphi$ and an analytic map
 \begin{equation*}
 \cU\ni \theta\xmapsto{\quad}u_{\theta}\in U(B)
 \end{equation*}
 such that $u_{\varphi}=1$ and $\theta = u_{\theta}\cdot \varphi\cdot u_{\theta}^*$, $\forall \theta\in \cU$.
 \item For any morphism $\varphi\in C^*(A,B)$ the orbit map
 \[U(B)\ni u\mapsto u\varphi u^*\] is a fibration over the orbit.

 \end{enumerate}
\end{Theorem}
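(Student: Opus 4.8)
The plan is to run the same program as in Theorem~\ref{th:cocyle-man-1}, recognizing $C^*(A,B)$ as essentially a space of morphisms out of the compact group $U(A)$ and exploiting the semisimplicity of the finite-dimensional $A$ in the guise of a first-cohomology (inner-derivation) vanishing. Concretely, since $A$ is finite-dimensional the unitary group $U(A)$ is a compact Lie group, and since every element of a $C^*$-algebra is a complex-linear combination of unitaries, the restriction
\[
 r\colon C^*(A,B)\longrightarrow \Hom\big(U(A),U(B)\big),\qquad \varphi\longmapsto \varphi|_{U(A)},
\]
is injective and $U(B)$-equivariant for the conjugation action $\varphi\mapsto u\varphi u^*$ on the source and the adjoint action on the target. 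This already delivers the \emph{close-hence-conjugate} phenomenon cheaply: given $\theta\in C^*(A,B)$ close to $\varphi$, its restriction $r(\theta)$ is close to $r(\varphi)$ in $\Hom(U(A),U(B))$, so by Corollary~\ref{cor:morsplit} there is an analytically-chosen $u_\theta\in U(B)$, with $u_\varphi=1$, such that $r(\theta)=u_\theta\, r(\varphi)\, u_\theta^*=r\big(u_\theta\varphi u_\theta^*\big)$; injectivity of $r$ then forces $\theta=u_\theta\varphi u_\theta^*$. Since $r$ is continuous this shows the orbit of $\varphi$ contains a neighborhood, giving the open orbits of~(2); the analyticity of $\theta\mapsto u_\theta$ required by~(3) is upgraded once the manifold structure of~(1) is available.

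For the submanifold assertion~(1) I would argue intrinsically, mirroring the chart construction in the proof of Theorem~\ref{th:cocyle-man-1}. The morphism equations (linearity, multiplicativity, $*$-preservation, unitality) cut $C^*(A,B)$ out of $L(A,B)$, and the tangent space at $\varphi$ is the Banach space
\[
 \cD_\varphi:=\big\{D\in L(A,B)\ :\ D(ab)=D(a)\varphi(b)+\varphi(a)D(b),\ D(a^*)=D(a)^*\big\}
\]
of $*$-derivations along $\varphi$. The orbit-map differential $X\mapsto\big(a\mapsto [X,\varphi(a)]\big)$ at $1\in U(B)$ carries a skew-adjoint $X\in\operatorname{Lie}(U(B))$ into $\cD_\varphi$, and the content of parts~(2)--(4) is that this map is a \emph{submersion}: it surjects onto $\cD_\varphi$ with complemented kernel (the skew-adjoint part of the relative commutant $\varphi(A)'\subseteq B$). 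Granting this, $\cD_\varphi$ is complemented in $L(A,B)$ and one builds an analytic chart on a neighborhood of $\varphi$ exactly as in Theorem~\ref{th:cocyle-man-1}, proving~(1); the submersion property then yields the local section of~(3) and the open orbits of~(2) through \cite[Section~5.9.1]{bourb-vars-17}, while the fibration in~(4) follows as in Corollary~\ref{cor:orbisfibrel} from \cite[Section~III.1.6, Proposition~12]{bourb-lie-13}.

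The main obstacle, and the only place finite-dimensionality and the $C^*$-structure are really used, is the surjectivity-with-split-kernel of that orbit differential: one must show that \emph{every} $*$-derivation $D\in\cD_\varphi$ is inner, implemented by a \emph{skew-adjoint} $X$, and that $X$ can be chosen to depend analytically on $\varphi$. This is precisely the vanishing of the relevant first cohomology guaranteed by semisimplicity (separability) of the finite-dimensional $A$, which I would establish by averaging over the compact group $U(A)$ against its Haar measure in the style of Lemma~\ref{le:cansplit}: symmetrizing an a priori non-skew implementing element produces a skew-adjoint one, and the same averaging projection $b\mapsto\int_{U(A)}\varphi(u)\,b\,\varphi(u)^*\,\mathrm{d}\mu(u)$ onto $\varphi(A)'$ furnishes both the complement of the commutant and the analyticity of the splitting. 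Assembling these pieces in the order (inner-derivation splitting) $\Rightarrow$ (submersion) $\Rightarrow$ (chart, open orbits, local conjugator, fibration) completes the proof, with the decomposition $A\cong\bigoplus_i M_{n_i}(\bC)$ available throughout to make the derivation computations explicit if desired.
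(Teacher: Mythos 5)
Your proposal is correct in substance, but it is a hybrid: your first paragraph \emph{is} the paper's entire proof, while your treatment of part~(1) replaces the paper's argument with a genuinely different one. The paper observes that a finite-dimensional $C^*$-algebra is the span of its compact unitary group, so that restriction identifies $C^*(A,B)$, $U(B)$-equivariantly, with a union of $U(B)$-orbits inside $\Hom\big(U(A),U(B)\big)$, and then reads \emph{all four} assertions off Corollary~\ref{cor:morsplit}: those orbits are open in the closed submanifold $\Hom\big(U(A),U(B)\big)\subset C\big(U(A),U(B)\big)$, so the manifold structure of~(1) transfers along the very same identification, with no separate chart construction. You instead rebuild~(1) intrinsically via $*$-derivations, inner-ness of derivations on the finite-dimensional $A$, and averaging; this is viable and is precisely the alternative route the authors themselves flag in Appendix~\ref{se:asidehoch} (Lemma~\ref{le:hochsplit}), and it buys you the tangent-space identification $T_\varphi C^*(A,B)\cong Z^1_{sa}(A,B)$ up front, which the paper defers to Theorem~\ref{th:tgsp}. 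Two details in your sketch need repair, both with material already in the paper: first, the submersion property of the orbit differential does not by itself make $\cD_\varphi$ complemented in $L(A,B)$ --- your averaging projection $b\mapsto\int_{U(A)}\varphi(u)\,b\,\varphi(u)^*\,\mathrm{d}\mu(u)$ complements only the \emph{kernel} (the commutant) inside $\operatorname{Lie}(U(B))$, whereas complementing $\cD_\varphi$ itself requires the cochain-level splitting $C^1(A,B)\to Z^2(A,B)$ of Lemma~\ref{le:hochsplit}; second, the chart cannot be built ``exactly as in'' Theorem~\ref{th:cocyle-man-1}, since there is no group structure, exponential, or BCH formula on $C^*(A,B)$ --- the correct analogue is the quadratic correction $D\mapsto D+s(D\cdot D)$, where $s$ is that Hochschild splitting and $\delta D=-D\cdot D$ encodes multiplicativity of $\varphi+D$.
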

\begin{proof}
 All of this follows immediately from Corollary \ref{cor:morsplit}, applied to $G:=U(A)$ and $U:=U(B)$: note that a finite-dimensional $C^*$-algebra is the span of its (compact!) unitary group, and hence that the $U(B)$-orbits in $C^*(A,B)$ under conjugation can be identified with some of the $U(B)$-orbits in $\Hom(U(A), U(B))$.
\end{proof}

\begin{Remark}\label{re:wo}
 \cite[Proposition 5.2.6]{wo} addresses the analogue for spaces of {\it projections} in $B$ rather than morphisms $A\to B$; given the ($B$-natural) identification
 \begin{equation*}
 \begin{tikzpicture}[auto,baseline=(current bounding box.center)]
 \path[anchor=base]
 (0,0) node (l) {$\left(\text{projections }\in B\right)$}
 +(8,0) node (r) {$C^*\big(\bC^2,B\big)$,}
 +(4,0) node (m) {$\cong$}
 ;
 \draw[->] (l) to[bend left=10] node[pos=.5,auto] {$\scriptstyle p\mapsto \left(\text{morphism sending }(1,0)\mapsto p\right)$} (r);
 \draw[->] (r) to[bend left=10] node[pos=.5,auto] {$\scriptstyle \varphi\mapsto \varphi(1,0)$} (l);
 \end{tikzpicture}
 \end{equation*}
 that result is essentially Theorem \ref{th:castmorcross}\,(\ref{item:castcross}) specialized to $A=\bC^2$. In \cite[Proposition 5.2.6]{wo}, the analogue of the map $\theta\mapsto u_{\theta}$ of Theorem \ref{th:castmorcross} is only required to be continuous, but it is clear by direct examination that the proof in fact provides an {\it analytic} map.
\end{Remark}

There is also a Banach-algebra analogue of Theorem \ref{th:castmorcross}, which we record here for completeness. By way of a preamble, recall that an algebra is {\it semisimple} if it is a finite product of matrix algebras over division rings (e.g., \cite[Theorem/Definition 2.5 and Theorem 3.5]{lam} or \cite[Section~3.1, Definition and Section~3.5, Theorem]{pierce}). Since we will only apply the term to finite-dimensional real or complex algebras, this means matrix algebras over $\bR$, $\bH$ (the quaternions), or $\bC$.

\begin{Remark}\label{re:ssban}
 It is customary, in the Banach-algebra literature, to see the term `semisimple' used somewhat differently: in both \cite[Definition 1.5.1]{dales} and \cite[Section~2.2]{dael}, for instance, the term, applied to a unital Banach algebra $A$, means that the {\it Jacobson radical} $\operatorname{rad}A$ vanishes.

 Recall (for instance from \cite[Section~4]{lam} or \cite[Section~4.3]{pierce}) that $\operatorname{rad}A$ is the intersection of all maximal left ideals or, equivalently, all maximal right ideals; those sources contain many other characterizations of the Jacobson radical.

 In short, the semisimplicity of \cite[Definition 1.5.1]{dales} is the {\it Jacobson- $($or J-$)$semisimplicity} of~\cite[Definition 4.7]{lam}. Our language here follows the latter source rather than the former.
\end{Remark}

\begin{Theorem}\label{th:banmorcross}
 Let $A$ and $B$ be two unital Banach algebras, with $A$ finite-dimensional and semisimple.
 \begin{enumerate}[$(1)$]\itemsep=0pt
 \item\label{item:banisman} The space $\cat{BAlg}(A,B)\subset L(A,B)$ is a closed submanifold.

 \item\label{item:banopenorb} The orbits therein under conjugation by the group $B^{\times}$ of invertible elements are all open in~$\cat{BAlg}(A,B)$.

 \item\label{item:bansect} Every morphism $\varphi\colon A\to B$ has a neighborhood $\cU\subset \cat{BAlg}(A,B)$ admitting an analytic map
 \begin{equation*}
 \cU\ni \theta\xmapsto{\quad}u_{\theta}\in B^{\times}
 \end{equation*}
 such that $u_{\varphi}=1$ and $\theta = u_{\theta}\cdot \varphi\cdot u_{\theta}^{-1}$, $\forall \theta\in \cU$.

 \item\label{item:banfib} For any morphism $\varphi\in \cat{BAlg}(A,B)$ the orbit map
 \begin{equation*}
 B^{\times}\ni u\mapsto u\cdot \varphi\cdot u^{-1}
 \end{equation*}
 is a fibration over the orbit.

 \end{enumerate}
\end{Theorem}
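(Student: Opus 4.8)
The plan is to mimic the proof of the $C^*$-algebra case (Theorem \ref{th:castmorcross}), with the unitary group $U(A)$ replaced by a suitable \emph{compact group of units} of $A$. Because $A$ is finite-dimensional and semisimple it decomposes as $A\cong\prod_i M_{n_i}(D_i)$ with each $D_i$ one of $\bR$, $\bC$, $\bH$, so that $A^\times\cong\prod_i\mathrm{GL}_{n_i}(D_i)$ is a finite-dimensional Lie group. I would fix a maximal compact subgroup $G\le A^\times$, concretely the product of the $\mathrm{O}(n_i)$, $\mathrm{U}(n_i)$, $\mathrm{Sp}(n_i)$, and record the one genuinely algebraic input: $G$ \emph{spans} $A$ as a real vector space. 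This is the double-commutant (Burnside) statement that each matrix block is the linear span of its maximal compact unitary group, and it is exactly where semisimplicity enters, a nonzero Jacobson radical never being reached by a compact group of units. For non-semisimple $A$ the theorem genuinely fails, so the hypothesis is not cosmetic.

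With $G$ in hand I would apply Corollary \ref{cor:morsplit} to the compact group $G$ and the Banach Lie group $U:=B^\times$ (whose Lie algebra is $B$): the space $\Hom(G,B^\times)\subset C(G,B^\times)$ is then a closed analytic submanifold carrying analytic local conjugating sections, so that nearby morphisms are $B^\times$-conjugate and the conjugation orbits are open. The bridge to $\cat{BAlg}(A,B)$ is the restriction map $\varphi\mapsto\varphi|_G$. Since $G$ spans $A$, restriction is injective; being the trace on $G$ of the bounded linear embedding $L(A,B)\hookrightarrow C(G,B)$, it is an analytic embedding; and it is $B^\times$-equivariant, carrying conjugation of morphisms to conjugation of representations. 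Everything thus reduces to identifying the image of restriction inside $\Hom(G,B^\times)$ and transporting structure back.

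The key claim is that this image is \emph{clopen} in $\Hom(G,B^\times)$. Closedness is routine: a $\rho\in\Hom(G,B^\times)$ lies in the image precisely when its unique candidate linear extension $\sum c_jg_j\mapsto\sum c_j\rho(g_j)$ is well defined and multiplicative, and these are closed conditions on $\rho$. Openness is the crux, and here I would exploit the machinery rather than compute: extendability to an algebra morphism is invariant under conjugation by $B^\times$ (if $\rho_0=\varphi|_G$ extends to $\varphi$, then $u\rho_0u^{-1}$ extends to $u\varphi u^{-1}$), while Corollary \ref{cor:morsplit} guarantees that every $\rho$ near an extendable $\rho_0$ is $B^\times$-conjugate to it. Hence a whole neighborhood of $\rho_0$ consists of extendable representations, and the image is open.

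Granting the clopen image, it is an open analytic submanifold of $\Hom(G,B^\times)$, hence again a closed analytic submanifold of $C(G,B^\times)$; pulling back along the restriction embedding yields parts (\ref{item:banisman})--(\ref{item:bansect}), namely the submanifold structure of $\cat{BAlg}(A,B)$, the openness of the $B^\times$-orbits, and the analytic conjugating sections $\theta\mapsto u_\theta$, directly from the corresponding assertions of Corollary \ref{cor:morsplit}. Part (\ref{item:banfib}), that each orbit map $u\mapsto u\cdot\varphi\cdot u^{-1}$ is a fibration over its orbit, follows exactly as in Corollary \ref{cor:orbisfibrel}: the local sections make the orbit map a surjective submersion, whence a principal fibration. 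I expect the main obstacle to be the openness step, making precise that ``nearby $G$-representations are conjugate, and conjugation preserves extendability'' truly forces the image open, together with the careful transport of the split-tangent-space condition through the restriction embedding, since a complement in $C(G,B)$ must be checked to restrict to a complement inside the copy of $L(A,B)$.
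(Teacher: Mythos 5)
Your proposal follows the paper's own proof: the paper likewise applies Corollary \ref{cor:morsplit} to a maximal compact subgroup $G\le A^{\times}$ (a product of orthogonal, unitary and compact symplectic groups) and $U:=B^{\times}$, using semisimplicity exactly to obtain $\mathrm{span}_{\bR}(G)=A$, and then identifies $\cat{BAlg}(A,B)$ with a union of open $B^{\times}$-orbits inside $\Hom(G,B^{\times})$. Your clopen-image argument (conjugation-invariance of extendability plus local conjugacy from Corollary \ref{cor:morsplit}) is a correct, more detailed spelling-out of what the paper compresses into ``we can then conclude as before.''
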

\begin{proof}
 Parallel to that of Theorem \ref{th:castmorcross}, except that now Corollary \ref{cor:morsplit} is to be applied to~${U\!:=\!B^{\times}}$ and $G$ a maximal compact subgroup of the (finite-dimensional) Lie group $A^{\times}$: since $A$ is a~product of matrix algebras over $\bR$, $\bC$ or $\bH$, $G$ is a product of orthogonal, unitary and (compact) symplectic \cite[Exercise 7.4 and discussion preceding it]{fh} groups.

 Because the (real, complex, or quaternionic) unitary group of a matrix algebra spans the latter over $\bR$, the semisimplicity of $A$ ensures that $\mathrm{span}_{\bR}(G)=A$;
 we can then conclude as before.
\end{proof}

\begin{Remarks}\label{res:wogen}\quad
 \begin{enumerate}[(1)]\itemsep=0pt
 \item Spaces of morphisms have been studied extensively in the operator-algebra literature from this same differential-geometric perspective. Papers we refer to below include~\cite{acs,cg-average,cg-amen}, and {\it their} references will provide the reader with ample material.
 \item\label{item:acs} \cite[Proposition 2.2]{acs} is a version of Theorem \ref{th:castmorcross} which
 \begin{itemize}\itemsep=0pt
 \item on the one hand gives more precise numerical estimates for how close to 1 the element $u_{\theta}$ is depending on $\|\varphi-\theta\|$;
 \item while on the other specializes to $B=B(H)$ (operators son some Hilbert space $H$) and conjugates by the general linear group $B^{\times}$ in place of $U(B)$;
 \item and is not concerned with the analytic structure of the orbit.
 \end{itemize}
 That proof adapts easily enough to recover the various missing pieces: the extension to arbitrary $B$ follows, for instance, from \cite[Remark~2.4]{acs}, while the switch from $B^{\times}$ to $U(B)$ can be effected by a polar-decomposition argument. In short, one can essentially read Theorem \ref{th:castmorcross} into \cite[Proposition 2.2]{acs}. The point here, rather, is to obtain results such as Theorem~\ref{th:castmorcross} and Theorem \ref{th:banmorcross} uniformly, as consequences of the preceding discussion on the Banach--Lie-group-valued cohomology of compact groups.

 \item There is an alternative approach to Theorem \ref{th:banmorcross}, along the lines of \cite[Proposition 8.2.2 and Theorem 8.2.3]{run-lec} (in turn based on \cite[Proposition 4.1 and Theorem 4.5]{cg-amen}).

 In those references the domain algebra $A$ is assumed {\it amenable} in the sense of \cite[Definition~2.1.9]{run-lec}, and the codomain $B$ is assumed to be dual (i.e., the dual of a Banach space). Because for us $A$ is in fact finite-dimensional, the proof of \cite[Theorem 8.2.1]{run-lec} goes through for arbitrary $B$ upon dropping the `$**$' superscripts (indicating double duals) throughout.

 With this in place, \cite[Proposition 8.2.2 and part of Proposition 8.2.3]{run-lec} would be analogous to Theorem \ref{th:banmorcross}.
 \end{enumerate}
\end{Remarks}

Even when everything in sight is finite-dimensional, semisimplicity is essential in Theorem~\ref{th:banmorcross}: not only will $\cat{BAlg}(A,B)$ not, in general, be a submanifold of $L(A,B)$, but it will in fact not even be a (topological) manifold. Example \ref{ex:notmfld-notss} illustrates this.

\begin{Example}\label{ex:notmfld-notss}
 We work over the real numbers. Let $A:=\bR[\varepsilon]/\big(\varepsilon^2\big)$ and $B:=M_2(\bR)$. Since everything is finite-dimensional the Banach-algebra norms in question are all equivalent, so we need not bother fixing them.

 The morphisms $A\to B$ fall into two classes:
 \begin{itemize}\itemsep=0pt
 \item those that fail to annihilate $\varepsilon$, and hence identify it with a rank-1 nilpotent matrix in~$M_2(\bR)$;
 \item and the only one that does (annihilate $\varepsilon$):
 \begin{equation}\label{eq:degmor}
 1\mapsto 1,\qquad \varepsilon\mapsto 0.
 \end{equation}
 \end{itemize}
 The set of rank-1 nilpotent $2\times 2$ matrices can be identified with pairs consisting of
 \begin{itemize}\itemsep=0pt
 \item a line $\ell\subset \bR^2$ (the kernel of the matrix);
 \item and a non-zero morphism $\bR^2/\ell\to \ell$.
 \end{itemize}
 This is nothing but the tangent bundle to the circle $\bS^1\!\cong\! \bR\bP^1$ (\cite[Theorem 3.5]{3264}, \cite[Lemma~4.4]{ms-charclass}, etc.) with the zero section removed, i.e., $\bS^1\!\times \bR^{\times}$. The addition of the degenerate morphism~\eqref{eq:degmor} glues the two copies of $\bS^1\times \bR_{>0}$ together, producing two open disks attached along a common origin. Plainly, this is not a manifold.
\end{Example}

That nearby morphisms need not be conjugate absent semisimplicity is already in evidence in Example \ref{ex:notmfld-notss}: the unique morphism annihilating the radical, in that example, is in the closure of the set of morphisms which do not. It turns out this is a rather general phenomenon.

\begin{Proposition}\label{pr:clorb}
 Let $\varphi\colon A\to B$ be a morphism in $\cat{BAlg}$, with $A$ finite-dimensional.
 \begin{enumerate}[$(1)$]\itemsep=0pt
 \item If $\varphi$ annihilates $\operatorname{rad}A$, the orbit of $\varphi$ under conjugation by $B^{\times}$ is closed.
 \item The converse holds if $B$ is finite-dimensional and semisimple.
 \end{enumerate}
\end{Proposition}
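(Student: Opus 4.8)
The plan is to prove the two implications by quite different means: the forward direction reduces to the rigidity already in hand for semisimple domains, while the converse is handled by an explicit degeneration of $\varphi$ towards its ``semisimplification''.

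\emph{Part (1).} If $\varphi$ annihilates $R:=\operatorname{rad}A$, then it factors through the quotient $\bar A:=A/R$, which is finite-dimensional and semisimple (Wedderburn), hence of the form $\prod_j M_{m_j}(D_j)$ as in the paragraph preceding Theorem~\ref{th:banmorcross}. The subset $M\subseteq\cat{BAlg}(A,B)$ of morphisms killing $R$ is cut out by the finitely many linear conditions $\psi(r_i)=0$ for a basis $(r_i)$ of $R$, hence is closed; it is $B^\times$-stable and, via precomposition with $A\twoheadrightarrow\bar A$, equivariantly homeomorphic to $\cat{BAlg}(\bar A,B)$. By Theorem~\ref{th:banmorcross}\,(\ref{item:banopenorb}) applied to the finite-dimensional semisimple algebra $\bar A$, the $B^\times$-orbits in $\cat{BAlg}(\bar A,B)$ are open; since they partition the space, each is also closed there. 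Transporting back along the homeomorphism shows the orbit of $\varphi$ is closed in $M$, hence in $\cat{BAlg}(A,B)$ (as $M$ is closed in it), hence in $L(A,B)$ (as $\cat{BAlg}(A,B)$, being the zero locus of the continuous unitality and multiplicativity conditions, is closed in $L(A,B)$). Notably this uses nothing about $B$ beyond being a unital Banach algebra, matching the hypotheses of~(1).

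\emph{Part (2).} I argue contrapositively: assuming $B$ finite-dimensional and semisimple, if $\varphi(R)\ne 0$ I exhibit a morphism $\psi$ in the closure of the orbit with $\psi(R)=0$. Such a $\psi$ cannot be a conjugate of $\varphi$, since every $u\varphi u^{-1}$ satisfies $(u\varphi u^{-1})(R)=u\,\varphi(R)\,u^{-1}\ne 0$, so the orbit is not closed. To build $\psi$, fix a Wedderburn--Malcev decomposition $A=S\oplus R$ with $S$ a semisimple subalgebra, write $B\cong\prod_j M_{m_j}(D_j)$, and let $W=\bigoplus_j V_j$ be a faithful $B$-module ($V_j$ the natural module of the $j$-th factor), viewed as an $A$-module through $\varphi$. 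On each $V_j$ form the radical filtration $V_j\supseteq RV_j\supseteq R^2V_j\supseteq\cdots$ and choose a compatible $D_j$-grading $V_j=\bigoplus_i V_{j,i}$ with $R^iV_j=\bigoplus_{k\ge i}V_{j,k}$. Define $u_t\in B^\times$, for $t\in(0,\infty)$, to act by the central scalar $t^i\cdot\id$ on each $V_{j,i}$. Since $\varphi(r)$ for $r\in R$ raises filtration degree by at least one while $\varphi(s)$ for $s\in S$ preserves it, the component of $u_t\varphi(a)u_t^{-1}$ shifting degree by $d$ is scaled by $t^{d}$; as $t\to 0$ this kills all of $\varphi(R)$ (where $d\ge 1$) and drives $\varphi(S)$ to its block-diagonal, associated-graded part (where $d=0$ survives). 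Hence $\psi:=\lim_{t\to0}u_t\varphi u_t^{-1}$ exists, equals the $A$-action on the associated graded module $\bigoplus_{j,i}V_{j,i}$ (so it is genuinely a morphism, and in any case $\cat{BAlg}(A,B)$ is closed), and annihilates $R$, as required.

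The main obstacle is the construction in Part~(2): one must check that $u_t$ really lies in $B^\times$ --- this is exactly where the finite-dimensionality and semisimplicity of $B$ enter, furnishing the matrix-factor structure in which the scalars $t^i$ are central and invertible --- and that the limit exists and stays a morphism. The bookkeeping that $\varphi(R)$ strictly raises the grading degree whereas $\varphi(S)$ preserves it, so that precisely the radical part is annihilated in the limit, is the technical heart; everything else is formal. It is worth remarking that this argument recovers the familiar principle that the orbit of a finite-dimensional module is closed exactly when the module is semisimple, i.e.\ exactly when $\operatorname{rad}A$ acts by zero.
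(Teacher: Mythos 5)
Your proof is correct. Part (1) is essentially the paper's own argument: both observe that the orbit (and hence its closure) lies in the closed set of morphisms annihilating $\operatorname{rad}A$, identify that set equivariantly with $\cat{BAlg}\big(A/\operatorname{rad}A,B\big)$, and conclude from Theorem~\ref{th:banmorcross}\,(\ref{item:banopenorb}) that the orbits there, being open and partitioning the space, are also closed.

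Part (2) is where you genuinely depart from the paper. The paper argues by induction on the nilpotency degree of $R:=\operatorname{rad}A$: with $J$ the top nonzero power of $R$, it invokes the fact that nilpotent elements of a finite-dimensional semisimple algebra are conjugate to their nonzero real scalings, concludes that the orbit closure contains morphisms scaling $\varphi|_J$ by arbitrary $r\in\bR^\times$ and hence (letting $r\to 0$) morphisms annihilating $J$, and then iterates on $A/J$. You instead kill the entire radical in a single limit, via the one-parameter family $u_t$ adapted to the radical filtration of the natural faithful $B$-module --- the standard degeneration-to-associated-graded argument familiar from module varieties. Your route buys completeness and explicitness: the paper's inference from ``each nilpotent is conjugate to its scalings'' to ``a single $u$ scales all of $\varphi(J)$ simultaneously'' is left to the reader, and filling it in requires precisely a two-step version of your $u_t$; your version also dispenses with the induction (and the attendant check that orbit closures are conjugation-stable), exhibiting the semisimplification of $\varphi$ in the orbit closure all at once. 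The paper's route buys brevity. One simplification available to you: the Wedderburn--Malcev splitting $A=S\oplus R$ is superfluous. Since each $R^i$ is a two-sided ideal, every $\varphi(a)$, $a\in A$, preserves the filtration $R^iV_j$, so all degree shifts are $\ge 0$ and the limit $\lim_{t\to 0}u_t\varphi(a)u_t^{-1}$ exists for every $a$ directly; the decomposition $a=s+r$ plays no essential role beyond bookkeeping.
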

\begin{proof}
 We prove the two claims in the order in which they were stated.

(1) Every element in the orbit of $\varphi$ will still annihilate the radical, hence so do the morphisms in the closure of that orbit. This reduces the problem to morphisms defined on the {\it semisimple} Banach algebra $\overline{A}:=A/\operatorname{rad}A$ and the claim follows from the openness of the $B^{\times}$-orbits in~$\cat{BAlg}\big(\overline{A},B\big)$ (Theorem \ref{th:banmorcross}\,(\ref{item:banopenorb})), because the orbits partition the space of morphisms.

(2) As for the converse, note first that in finite-dimensional, semisimple algebras nilpotent elements are conjugate to their (real, say) non-zero scalings. It follows that if $J\subset A$ is the top non-zero power of the radical, $\varphi$ will be conjugate to a morphism that scales every element of $J$ by an arbitrary $r\in \bR^{\times}$. The closure of the orbit of $\varphi$ thus contains morphisms that annihilate~$J$, and we can proceed by induction on
 \begin{equation*}
 \max\{n\mid(\operatorname{rad}A)^n\ne 0\}.
 \end{equation*}
 The proof is complete.
\end{proof}

What delivers Theorems \ref{th:castmorcross} and~\ref{th:banmorcross}, ultimately, is the rigidity inherent to compact groups (which in turn ensures that their vector-space-valued cohomology vanishes, provides the requisite Banach-space splitting through Lemma~\ref{le:cansplit}, etc.). That rigidity makes an appearance in the proofs only obliquely, via the fact that semisimple algebras are spanned by compact subgroups. It is perhaps pertinent at this point to spell out the converse:

\begin{Lemma}\label{le:mustbess}
 A finite-dimensional real algebra $A$ is spanned by a compact subgroup of $A^{\times}$ if and only if it is semisimple.
\end{Lemma}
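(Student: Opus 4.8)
The statement has two implications, and the forward one ($A$ semisimple $\Rightarrow$ spanned by a compact subgroup) is essentially already recorded in the proof of Theorem~\ref{th:banmorcross}, so I would dispatch it quickly. A finite-dimensional real semisimple $A$ is a product $\prod_i M_{n_i}(D_i)$ with $D_i\in\{\bR,\bC,\bH\}$, and the corresponding product $G$ of orthogonal, unitary and compact symplectic groups is a compact subgroup of $A^{\times}$ whose real span is all of $A$, since each matrix-algebra factor is spanned over $\bR$ by its unitary group.

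The substance is the converse. Suppose $G\le A^{\times}$ is compact with $\mathrm{span}_{\bR}(G)=A$, and let $N:=\operatorname{rad}A$, a nilpotent two-sided ideal. The plan is to split $N$ off of $A$ as a left module and then show the resulting idempotent lies in $N$, forcing it to vanish. First I would use compactness to average an arbitrary inner product against the Haar measure of $G$, obtaining an inner product on $A$ for which left multiplication $\lambda(g)$ by each $g\in G$ is orthogonal. Since $N$ is a left ideal it is $\lambda(G)$-invariant, hence so is its orthogonal complement $N^{\perp}$.

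The crux is the observation that $N^{\perp}$ is in fact a left ideal: because $\lambda$ is linear and $G$ spans $A$, invariance of $N^{\perp}$ under every $\lambda(g)$ upgrades to invariance under all of $\lambda(A)$. This gives a decomposition $A=N\oplus N^{\perp}$ into left ideals. Writing $1=e+f$ with $e\in N$ and $f\in N^{\perp}$, the projection onto $N$ along $N^{\perp}$ is right multiplication by $e$, so $e$ is idempotent; but a nonzero idempotent cannot lie in the nilpotent ideal $N$, whence $e=0$, so $1=f\in N^{\perp}$ and therefore $N^{\perp}=A$, $N=0$. Thus $A$ is Jacobson-semisimple, equivalently (in finite dimensions over $\bR$) a product of matrix algebras over $\bR$, $\bC$ or $\bH$, i.e.\ semisimple in the sense used here.

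The main obstacle to watch is precisely the step making $N^{\perp}$ a left ideal: this is where the spanning hypothesis (as opposed to merely $G\subseteq A^{\times}$) does the real work, since without it one obtains only $G$-invariance of $N^{\perp}$ rather than $A$-invariance, and the argument collapses. The remaining ingredients — the Haar-averaging that orthogonalizes $\lambda(G)$, the identification of the projection with right multiplication by an idempotent, and the vanishing of idempotents inside a nilpotent ideal — are routine.
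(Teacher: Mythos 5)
Your proof is correct, but it takes a genuinely different route from the paper's. For the converse, the paper never touches inner products: it embeds the spanning compact group $K$ into a maximal compact subgroup of $A^{\times}$, observes that $K$ cannot meet the unipotent normal subgroup $1+\operatorname{rad}A$ (a successive extension of copies of $(\bR,+)$, hence containing no nontrivial compact subgroups), and then invokes the Malcev--Wedderburn theorem \cite[Section~11.6, Theorem]{pierce} to produce a semisimple subalgebra $A'\le A$ complementing the radical, together with the conjugacy of maximal compact subgroups \cite[Theorem~VII.1.2]{bor-symsp} to arrange that a conjugate of $A'$ contains $K$; the span of $K$ is then that semisimple subalgebra, which by hypothesis is all of $A$. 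Your argument is instead a Maschke-type averaging: Haar-average an inner product so that left multiplication $\lambda(g)$, $g\in G$, is orthogonal, note that $\operatorname{rad}A$ is $\lambda(G)$-invariant, and use the spanning hypothesis to upgrade $\lambda(G)$-invariance of the orthogonal complement to $\lambda(A)$-invariance (valid, since a subspace invariant under a set of operators is invariant under their linear span --- this is indeed where spanning does the work), so that $A=\operatorname{rad}A\oplus(\operatorname{rad}A)^{\perp}$ is a decomposition into left ideals; the $\operatorname{rad}A$-component $e$ of $1$ is then idempotent (the projection onto $\operatorname{rad}A$ being right multiplication by $e$) and lies in a nilpotent ideal, forcing $e=0$ and hence $\operatorname{rad}A=0$. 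Each of these steps checks out. As for what each approach buys: yours is more elementary and self-contained, needing only Haar measure, linear algebra, and (at the very end) the Wedderburn--Artin and Frobenius theorems to pass from Jacobson semisimplicity to the paper's definition of semisimple; the paper's proof is heavier but reuses machinery already deployed in the proof of Theorem~\ref{th:banmorcross} (maximal compact subgroups and their conjugacy) and yields slightly finer structural information, namely that the span of $K$ is itself a Wedderburn complement of the radical.
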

\begin{proof}
 We have already noted the backward implication in the course of the proof of Theorem~\ref{th:banmorcross}. Conversely, suppose $A$ is spanned by some compact subgroup $K\le A^{\times}$. Setting $J:=\operatorname{rad}A$, the short exact sequence
 \begin{equation}\label{eq:algseq}
 \{0\}\to J\xrightarrow{\quad}A\xrightarrow{\quad}\overline{A}\to \{0\}
 \end{equation}
 induces an extension
 \begin{equation*}
 \{1\}\to J+1\xrightarrow{\quad\phantom{\pi}\quad} A^{\times}\xrightarrow{\quad\pi\quad} \overline{A}^{\times}\to \{1\}
 \end{equation*}
 (an element of $A$ is invertible if and only if it is invertible modulo $J$ \cite[Proposition 4.8]{lam}, so the rightmost map is indeed onto). Because $J+1$ is a successive extension of copies of $(\bR,+)$, $K$ cannot intersect it and is thus mapped by $\pi$ isomorphically onto a compact subgroup of $\overline{A}^{\times}$.

 By the {\it Malcev--Wedderburn theorem} \cite[Section~11.6, Theorem]{pierce}, the sequence \eqref{eq:algseq} splits essentially uniquely, in the sense that
 \begin{itemize}\itemsep=0pt
 \item there are subalgebras $A'\le A$ mapped isomorphically onto $\overline{A}$ by $\pi$;
 \item and such algebras are all mutually $(J+1)$-conjugate.
 \end{itemize}
 There is no harm in assuming that $K\le A^{\times}$ is {\it maximal} compact \cite[Theorem~VII.1.2]{bor-symsp}. But now a subalgebra $A'\le A$ splitting \eqref{eq:algseq} contains an isomorphic copy $K'$ of the maximal compact subgroup $K\le A^{\times}$; because maximal compact subgroups are mutually conjugate (see \cite[Theorem~VII.1.2]{bor-symsp} again), some conjugate
 \begin{equation*}
 A'':=gA'g^{-1},\qquad g\in A^{\times}
 \end{equation*}
 contains $K$. This means in particular that the span of $K$ is $A''$, which was by construction a~{\it semisimple} subalgebra of $A$. This, then, proves the latter's semisimplicity.
\end{proof}

\cite[Theorem 8.2.3]{run-lec} and \cite[Theorem 4.5]{cg-amen} give more information on spaces of Banach-algebra morphisms, identifying their tangent bundles in the spirit of Theorem \ref{th:cocyle-man-1}\,(\ref{item:cocyle-man-1-tgz1}). This too follows from the preceding material, but requires a brief detour.

\subsection{Background on Hochschild cohomology}\label{subse:hoch}

This will be little more than a pointer to the extensive literature on Hochschild cohomology, and an opportunity to fix some notation.

For the purely algebraic theory the reader can consult, for instance, \cite[Section~1.9]{dales}, \cite[Chapter 11; especially Section~11.1]{pierce} or \cite[Sections~4 and 5]{ginz-ncg}. Given an algebra $A$ (unital, for us) and an $A$-bimodule $M$, denote by
\begin{equation}\label{eq:cochains}
 C^n(A,M):=\mathrm{Hom}\big(A^{\otimes n},M\big) \cong \left\{\text{multilinear maps }A^n\to M\right\}
\end{equation}
the set of {\it $n$-cochains} ($M$-valued, attached to the bimodule structure). In the functional-analytic setup (\cite[Section~1]{john-coh}, \cite[Section~2.8]{dales}, \cite[Section~7]{dael}) $A$ is a Banach algebra, $M$ is a {\it Banach bimodule} \cite[Definition 2.6.1]{dales}, and the multilinear maps are required to be bounded. Equivalently~\cite[Theorem 2.9]{ryan}, the middle term of \eqref{eq:cochains} ought to be $\cat{Ban}\big(A^{\otimes n},M\big)$,
where the left-hand term denotes the {\it projective Banach-space tensor product} of \cite[Section~2.1]{ryan}.

We then have a cochain complex
\begin{equation}\label{eq:hochcoc}
 0\xrightarrow{}
 E\cong C^0(A,E)
 \xrightarrow{\delta^0}
 C^1(A,E)
 \xrightarrow{\delta^1}
 \cdots \xrightarrow{\delta^{n-1}}
 C^n(A,E)
 \xrightarrow{\delta^n}
 \cdots
\end{equation}
defined by
\begin{align*}
 \delta^n f (a_1,a_2,\dots,a_{n+1}):={}& a_1 f(a_2,\dots,a_{n+1})\\
 &+\sum_{i=1}^{n} (-1)^{i} f(a_1,\dots,a_{i-1},a_i a_{i+1},a_{i+2},\dots,a_{n+1})\\
 &+(-1)^{n+1} f(a_1,\dots,a_n)a_{n+1}.
\end{align*}

Then, as for groups (cf.\ Section~\ref{subse:coh}), define
\begin{itemize}\itemsep=0pt
\item the (vector or Banach) space
 \begin{equation*}
 Z^n(A,M):=\ker\big(C^n(A,M)\xrightarrow{\delta^n}C^{n+1}(A,M)\big)
 \end{equation*}
 of {\it $n$-cocycles};
\item the (vector or Banach) space
 \begin{equation*}
 B^n(A,M):=\im\big(C^{n-1}(A,M)\xrightarrow{\delta^{n-1}}C^{n}(A,M)\big)
 \end{equation*}
 of {\it $n$-coboundaries};
\item and the {\it $n^{\rm th}$ Hochschild cohomology}
 \begin{equation*}
 H^n(A,M):=Z^n(A,M)/B^n(A,M).
 \end{equation*}
\end{itemize}

\subsection{Group/Hochschild cocycles as tangent spaces}\label{subse:cocytg}

We can now return to the main thread, supplementing Theorems \ref{th:castmorcross} and \ref{th:banmorcross} with the requisite tangent-bundle information. The statement of Theorem \ref{th:tgsp} requires one more prefatory discussion, on how Hochschild cohomology meshes with $*$-structures.

To wit, suppose $E$ is a complex Banach space equipped with
\begin{itemize}\itemsep=0pt
\item a $*$-structure (as usual: an involutive conjugate-linear continuous operation);
\item and an $A$-bimodule structure for a $C^*$-algebra $A$;
\item so that the two are compatible in the guessable sense:
 \begin{equation*}
 (am)^* = m^* a^*
 \qquad\text{and}\qquad
 (ma)^* = a^* m^*,\qquad \forall a\in A,\quad m\in E.
 \end{equation*}
\end{itemize}
There is then a $*$-structure on cochains given by
\begin{equation*}
 f^*(a_1, \dots, a_n):=f(a_n^*, \dots, a_1^*)^*,\qquad \forall f\in C^n(A,E)
\end{equation*}
(note the argument order reversal), which intertwines the differentials $\delta^i$ of \eqref{eq:hochcoc}. In particular, it also induces $*$-structures on cocycle and coboundary spaces, and allows us to speak, for instance, of {\it self-adjoint} elements therein:
\begin{equation*}
 Z_{sa}^n(A,E):=\{f\in Z^n(A,E)\mid f^*=f\},
\end{equation*}
and so on.

\begin{Theorem}\label{th:tgsp}
 Let $A$ and $B$ be unital operator algebras, either Banach or $C^*$, with $A$ finite-dimensional and, in the Banach case, semisimple.
 \begin{enumerate}[$(1)$]\itemsep=0pt
 \item\label{item:bantg} For a unital Banach-algebra morphism $\varphi\colon A\to B$, the tangent space
 \begin{equation}\label{eq:tgban}
 T_{\varphi}\cat{BAlg}(A,B)\subset T_{\varphi}\cat{Ban}(A,B)\cong \cat{Ban}(A,B)
 \end{equation}
 is precisely the space $Z^1(A,B)$ of Hochschild $1$-cocycles for the $A$-bimodule structure on $B$ induced by $\varphi$.

\item\label{item:casttg} For a unital $C^*$ morphism $\varphi\colon A\to B$, the tangent space
 \begin{equation*}
 T_{\varphi}C^*(A,B)\subset T_{\varphi}\cat{Ban}(A,B)\cong \cat{Ban}(A,B)
 \end{equation*}
 is the space $Z_{sa}^1(A,B)$ of self-adjoint Hochschild $1$-cocycles for the $A$-bimodule structure on~$B$ induced by $\varphi$.
\end{enumerate}
\end{Theorem}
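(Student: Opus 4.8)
The plan is to establish each identification by a double inclusion, running the Banach and $C^*$ cases in parallel, and to use as the central input the fact that the finite-dimensional semisimple algebra $A$ is \emph{separable}, so that its Hochschild cohomology vanishes. Throughout I would take for granted that $\cat{BAlg}(A,B)$ (resp.\ $C^*(A,B)$) is already known to be a closed submanifold of $L(A,B)\cong\cat{Ban}(A,B)$ by Theorem~\ref{th:banmorcross}\,(\ref{item:banisman}) (resp.\ Theorem~\ref{th:castmorcross}(1)); hence $T_\varphi\cat{BAlg}(A,B)$ is a well-defined (real-linear) subspace of $\cat{Ban}(A,B)$, and every tangent vector is represented as the velocity $\dot\gamma(0)$ of an analytic curve $\gamma(t)=\varphi_t$ in the manifold with $\gamma(0)=\varphi$.

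For the forward inclusion I would differentiate the defining relations of a curve of morphisms at $t=0$. Multiplicativity $\varphi_t(a_1a_2)=\varphi_t(a_1)\varphi_t(a_2)$ yields $D(a_1a_2)=D(a_1)\varphi(a_2)+\varphi(a_1)D(a_2)$, which is precisely the condition $\delta^1D=0$ for the $\varphi$-induced bimodule structure on $B$; specializing to $a_1=a_2=1$ forces $D(1)=0$ automatically, so $D\in Z^1(A,B)$. In the $C^*$ case, differentiating in addition the relation $\varphi_t(a^*)=\varphi_t(a)^*$ gives $D(a^*)=D(a)^*$, i.e.\ $D^*=D$ for the $*$-structure on cochains introduced above, whence $D\in Z^1_{sa}(A,B)$. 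This gives the inclusions $T_\varphi\cat{BAlg}(A,B)\subseteq Z^1(A,B)$ and $T_\varphi C^*(A,B)\subseteq Z^1_{sa}(A,B)$.

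For the reverse inclusion I would invoke separability. Since $A$ is a finite product of matrix algebras over $\bR$, $\bC$, $\bH$ (over $\bC$ in the $C^*$ case), it is separable, so $H^1(A,B)=0$ and every $1$-cocycle is inner: $Z^1(A,B)=B^1(A,B)=\{\delta^0 b : b\in B\}$ with $(\delta^0 b)(a)=\varphi(a)b-b\varphi(a)$. Each such coboundary is manifestly a velocity vector, namely of the analytic curve $t\mapsto \exp(-tb)\,\varphi(\cdot)\,\exp(tb)=\operatorname{Ad}_{\exp(-tb)}\circ\varphi$, which lies in $\cat{BAlg}(A,B)$ (conjugation by invertibles preserves unital morphisms), passes through $\varphi$ at $t=0$, and has derivative $\delta^0 b$. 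Thus $B^1(A,B)\subseteq T_\varphi\cat{BAlg}(A,B)$, and combined with $Z^1=B^1$ and the forward inclusion this proves part~(\ref{item:bantg}).

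The one genuinely delicate point, and what I expect to be the main obstacle, is the $C^*$ refinement in part~(\ref{item:casttg}): the realizing curve must now consist of $*$-homomorphisms, i.e.\ conjugations by unitaries, which requires $\exp(-tb)$ unitary and hence $b$ skew-adjoint. So I would have to show that a self-adjoint inner derivation admits a skew-adjoint implementing element. Starting from any $b$ with $D=\delta^0 b$, the self-adjointness $D(a^*)=D(a)^*$ unwinds (a short computation) into the statement that $b+b^*$ commutes with $\varphi(A)$; replacing $b$ by $\tfrac12(b-b^*)$ then leaves $\delta^0 b$ unchanged while producing a skew-adjoint representative, and one checks that $\delta^0 b$ is indeed self-adjoint for skew-adjoint $b$. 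The curve $t\mapsto \operatorname{Ad}_{\exp(-tb)}\circ\varphi$ now lands in $C^*(A,B)$ with velocity $D$, giving $Z^1_{sa}(A,B)\subseteq T_\varphi C^*(A,B)$ and, with the forward inclusion, equality. This completes the argument in both cases.
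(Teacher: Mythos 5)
Your proof is correct, but it follows a genuinely different route from the paper's. The paper obtains both parts as a reformulation of its group-cohomology machinery: using (as in the proofs of Theorems~\ref{th:castmorcross} and~\ref{th:banmorcross}) that $A$ is spanned by a maximal compact subgroup $G\subset A^{\times}$ (resp.\ by $U(A)$), it identifies morphisms $A\to B$ with certain Lie-group morphisms $G\to B^{\times}$, applies Theorem~\ref{th:cocyle-man-1}\,(\ref{item:cocyle-man-1-tgz1}) to identify the tangent space with the group-cocycle space $Z^1(G,B)$ for the conjugation action $g\triangleright b=\varphi(g)b\varphi(g)^{-1}$, and then passes back and forth between group cocycles $f$ and Hochschild cocycles $f'(g)=f(g)\varphi(g)$, extended linearly from $G$ to $A=\operatorname{span}(G)$; in the $C^*$ case, self-adjointness of $f'$ is matched against $f$ taking values in $\operatorname{Lie}(U(B))$. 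You instead argue by double inclusion: differentiating curves of morphisms gives $T_{\varphi}\subseteq Z^1(A,B)$ (resp.\ $Z^1_{sa}(A,B)$), while separability of $A$ gives $Z^1(A,B)=B^1(A,B)$, and each inner derivation $\delta^0 b$ is visibly the velocity of a conjugation curve; your observation that self-adjointness of $\delta^0 b$ forces $b+b^*$ to commute with $\varphi(A)$, so that $\tfrac12(b-b^*)$ is a skew-adjoint representative of the same coboundary, correctly supplies the unitarity needed in the $C^*$ case. Your argument is essentially the ``alternative approach'' the paper itself alludes to in Appendix~\ref{se:asidehoch} (cf.\ Lemma~\ref{le:hochsplit}): it trades the group-cocycle/Hochschild-cocycle dictionary for the vanishing $H^1(A,B)=0$, is more direct and self-contained on the Hochschild side, and yields as a by-product that every tangent vector is realized by an invertible (resp.\ unitary) conjugation curve --- an infinitesimal form of the open-orbit statements in Theorems~\ref{th:castmorcross}\,(\ref{item:a2brefreport}) and~\ref{th:banmorcross}\,(\ref{item:banopenorb}). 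What the paper's route buys is uniformity: the tangent-space identification falls out of the compact-group cocycle theorem already proved, with no appeal to inner-derivation (Hochschild-vanishing) results. Both routes presuppose the submanifold structure from Theorems~\ref{th:castmorcross} and~\ref{th:banmorcross}, so neither is circular.
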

\begin{proof}
(1) As in the proof of Theorem \ref{th:banmorcross}, we identify the Banach-algebra morphisms $A\to B$ with {\it some} of the Lie-group morphisms
 \begin{equation*}
 G\to B^{\times},\qquad G\subset A^{\times}\quad\text{ maximal compact}.
 \end{equation*}
 Since the Lie algebra of $B^{\times}$ is $B$, Theorem \ref{th:cocyle-man-1}\,(\ref{item:cocyle-man-1-tgz1}) shows that \eqref{eq:tgban} consists of the maps
 \begin{equation}\label{eq:f'}
 G\ni g\xmapsto{\quad f'\quad} f(g)\varphi(g)\in B,\qquad f\in Z^1(G,B),
 \end{equation}
 extended by linearity to all of $A$, where the $G$-action on $B$ is given by conjugation via $\varphi$:
 \begin{equation*}
 g\triangleright b = \varphi(g)b\varphi(g)^{-1},\qquad \forall g\in G,\quad b\in B.
 \end{equation*}
 That one can indeed extend $f'$ linearly to $A$ follows from the fact that it is a tangent vector to $\cat{BAlg}(A,B)$ (a manifold by Theorem \ref{th:banmorcross}\,(\ref{item:banisman})), so can be expressed as a limit of differences of Banach-algebra morphisms and hence linear maps.

 The group-cocycle condition
 \begin{equation*}
 f(gh) = f(g) + g\triangleright f(h),\qquad \forall g,h\in G
 \end{equation*}
 translates to the Hochschild-cocycle condition
 \begin{equation*}
 f'(gh) = gf'(h) + f'(g)h,\qquad \forall g,h\in G
 \end{equation*}
 for $f'$ as in \eqref{eq:f'}, so that by linearity $f'$ is indeed a Hochschild cocycle.

 Conversely, any Hochschild cocycle $f'\in Z^1(A,B)$ gives rise to a group cocycle
 \begin{equation*}
 G\ni g\xmapsto{\quad f\quad} f'(g)\varphi(g)^{-1}\in B.
 \end{equation*}
 In short, the present statement is a reformulation of Theorem \ref{th:cocyle-man-1}\,(\ref{item:cocyle-man-1-tgz1}) in the very specific case of Lie-group morphisms $G\to B^{\times}$ extensible to Banach-algebra morphisms.

(2) The one missing ingredient, as compared to part (\ref{item:casttg}), is the self-adjointness. For this, recall first that the Lie algebra of the unitary group $U(B)$ is the space of skew-hermitian elements~\cite[Corollary 15.22]{upm-ban}, i.e., those satisfying $b^*=-b$. Then, we can apply Theorem~\ref{th:cocyle-man-1}\,(\ref{item:cocyle-man-1-tgz1}) as before, noting that in the presence of
 \begin{equation*}
 0 = f(1) = f(g\cdot g{-1}) = f(g) + \varphi(g)f\big(g^{-1}\big)\varphi(g)^*
 \end{equation*}
 (a consequence of the cocycle condition for $f\in Z^1(U(A),B)$), the conditions
 \begin{itemize}\itemsep=0pt
 \item $f(g)^* = f(g)$, expressing the fact that $f$ takes values in the Lie algebra $\operatorname{Lie}(U(B))$;
 \item and
 \begin{equation*}
 f(g^*)\varphi(g^*) = \varphi(g)^* f(g)^* = (f(g)\varphi(g))^*,
 \end{equation*}
 expressing the fact that the Hochschild cocycle $f':=f\cdot \varphi$ of \eqref{eq:f'} is self-adjoint
 \end{itemize}
 are equivalent.\hfill $\qed$ \renewcommand{\qed}{}
\end{proof}

\appendix

\section{An aside on Hochschild splitting}\label{se:asidehoch}

For whatever intrinsic interest it may possess, as well as possible future reference, we record the following Hochschild-cohomology analogue of Lemma \ref{le:cansplit}. It could, in principle, have provided an alternative approach to the material in Section \ref{se:oaapp}, on spaces of operator-algebra morphisms.

\begin{Lemma}\label{le:hochsplit}
 Let $A$ be a finite-dimensional, semisimple Banach algebra and $E$ a Banach $A$-bimodule.

For every $n\in \bZ_{\ge 0}$, the map
 \begin{equation}\label{eq:ctobsurj-hoch}
 C^n(A,E)\xrightarrow{\quad} Z^{n+1}(A,E)
 \end{equation}
 is a split surjection. Furthermore, having fixed $A$ and $n$, there is a universal constant $K$ so that the splittings have norm $\le KC$ for a bound
 \begin{equation*}
 C\ge \text{norms of}\ A\otimes E\to E\qquad\text{and}\qquad E\otimes A\to E.
 \end{equation*}
\end{Lemma}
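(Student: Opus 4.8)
The plan is to run the averaging argument of Lemma~\ref{le:cansplit} with the Haar measure of the compact group replaced by a \emph{separability idempotent} of $A$. The rigidity of compact groups exploited there is mirrored here by separability: a finite-dimensional algebra over $\bR$ or $\bC$ is semisimple if and only if it is separable (characteristic zero excludes the inseparable pathologies that would obstruct this in positive characteristic), which means there is an element
\[
 e=\sum_i x_i\otimes y_i\in A\otimes A,\qquad \sum_i x_i y_i=1,\qquad \sum_i a x_i\otimes y_i=\sum_i x_i\otimes y_i a\quad(\forall a\in A).
\]
For a single matrix block $M_m$ one may take $e=\tfrac1m\sum_{i,j}e_{ij}\otimes e_{ji}$, the division by $m$ being exactly where characteristic zero enters. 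Because $A$ is finite-dimensional, $e$ is a genuine finite sum and the algebraic and projective tensor norms on $A\otimes A$ are equivalent, so no analytic subtlety arises in manipulating it.

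Using $e$ I would define the candidate splitting $s\colon Z^{n+1}(A,E)\to C^n(A,E)$ by
\[
 (sa)(a_1,\dots,a_n):=\sum_i x_i\cdot a(y_i,a_1,\dots,a_n),
\]
where $x_i\cdot(-)$ denotes the left $A$-action on $E$. This is the Hochschild counterpart of the formula $b(g_1,\dots,g_n)=\int_G g^{-1}\triangleright a(g,g_1,\dots,g_n)\,\mathrm{d}\mu(g)$ of Lemma~\ref{le:cansplit}: one integrates out the extra leading variable, now against $e$ rather than $\mu$, with the left multiplication by $x_i$ playing the role of the twist $g^{-1}\triangleright$.

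The core of the argument is to check that $\delta^n\circ s=\mathrm{id}$ on $Z^{n+1}(A,E)$. The mechanism is uniform in $n$: one writes out $\delta^{n+1}a=0$ with leading argument $y_i$, multiplies on the left by $x_i$, and sums over $i$. The relation $\sum_i x_i y_i=1$ collapses the first term to $a$ itself; the interior (product-of-adjacent-arguments) terms and the trailing term match verbatim the corresponding terms of $\delta^n(sa)$; and the single leftover discrepancy, $\sum_i a_1 x_i\cdot a(y_i,a_2,\dots)$ against $\sum_i x_i\cdot a(y_i a_1,a_2,\dots)$, cancels precisely by the centrality relation $\sum_i a x_i\otimes y_i=\sum_i x_i\otimes y_i a$ applied through the evaluation $u\otimes v\mapsto u\cdot a(v,a_2,\dots,a_{n+1})$. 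I have verified this explicitly for $n=0$ (where it asserts that the derivation $a$ is inner, with witness $\sum_i x_i a(y_i)$) and $n=1$; the general case is the same bookkeeping. In fact $s$ is nothing but the standard contracting homotopy of the bar complex attached to a separability idempotent, so one could instead simply invoke that $A$ is projective over $A\otimes A^{\mathrm{op}}$ and hence $H^{n+1}(A,E)=0$. I expect this telescoping cancellation to be the only genuine obstacle: it is routine but must be organized carefully to keep the signs and the two structural relations for $e$ straight.

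The norm estimate is then immediate, since the only module action entering $s$ is the single left multiplication by $x_i$:
\[
 \|sa\|\le C\Big(\sum_i\|x_i\|\,\|y_i\|\Big)\|a\|,
\]
with $C$ bounding $A\otimes E\to E$. Thus $K:=\sum_i\|x_i\|\,\|y_i\|$, which depends only on $A$ through the chosen idempotent (and may be optimized to the projective norm $\|e\|_{A\widehat\otimes A}$), yields a splitting of norm $\le KC$, as required.
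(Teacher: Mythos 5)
Your proposal is correct and follows essentially the same route as the paper: the splitting you define, $(sa)(a_1,\dots,a_n)=\sum_i x_i\cdot a(y_i,a_1,\dots,a_n)$, is exactly the paper's composite \eqref{eq:splithcoc} built from the separability idempotent, used as a Hochschild analogue of the Haar averaging in Lemma~\ref{le:cansplit}, and your bar-complex/contracting-homotopy aside likewise mirrors the paper's own remark. You in fact supply details the paper leaves implicit, namely the telescoping verification that $\delta^n\circ s=\mathrm{id}$ on cocycles and the explicit constant $K=\sum_i\|x_i\|\,\|y_i\|$ for the norm bound.
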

\begin{proof}
 The argument is parallel to the proof of Lemma \ref{le:cansplit}, proceeding via averaging.

 Because $A$ is semisimple over a perfect field, it is {\it separable} \cite[Section~10.2, Definition and Section~10.7, Corollary b]{pierce}, i.e., $A$ is projective in the category of $A$-bimodules. It follows that there is an element $e\in A\otimes A$ with
 \begin{equation*}
 ae=ea,\qquad\text{multiplication}\ (e)=1
 \end{equation*}
 (the {\it separability idempotent} of \cite[proof of Theorem 3]{cmz}, the {\it diagonal} of \cite[Section~7.2]{dael}, etc.). We can now use $e$ to ``average'' cocycles into cochains whose coboundaries they are, by analogy to Haar integration:

 If $a\in Z^{n+1}(A,E)$, then it will be the coboundary of $b\in C^n(A,E)$ defined by
 \begin{equation}\label{eq:splithcoc}
 \begin{tikzpicture}[auto,baseline=(current bounding box.center)]
 \path[anchor=base]
 (0,0) node (l) {$A^{\otimes n}$}
 +(3,.5) node (lu) {$A^{\otimes(n+2)}$}
 +(6,.5) node (ru) {$A\otimes E$}
 +(9,0) node (r) {$E$,}
 ;
 \draw[->] (l) to[bend left=6] node[pos=.5,auto] {$\scriptstyle e\otimes \id_{A^{\otimes n}}$} (lu);
 \draw[->] (lu) to[bend left=6] node[pos=.5,auto] {$\scriptstyle \id_A\otimes a$} (ru);
 \draw[->] (ru) to[bend left=6] node[pos=.5,auto,swap] {$\scriptstyle $} (r);
 \draw[->] (l) to[bend right=6] node[pos=.5,auto,swap] {$\scriptstyle b$} (r);
 \end{tikzpicture}
 \end{equation}
 with the upper right-hand map being the left-module action.
\end{proof}

\begin{Remarks}\quad
 \begin{enumerate}[(1)]\itemsep=0pt
 \item For the averaging procedure employed in the proof see also \cite[Section~3]{cg-average}, which deals with {\it virtual} diagonals.

 \item Finite groups are covered both by the present discussion and by that of Lemma \ref{le:cansplit}, and the two arguments coalesce: the separability idempotent of a finite-group algebra $A=\bC G$ is
 \begin{equation*}
 e = \frac 1{|G|}\sum_{g\in G}g\otimes g^{-1}
 \end{equation*}
 \cite[Exercise 2.2.1]{run-lec}, so \eqref{eq:splithcoc} specializes back to \eqref{eq:splitgpcoc}.

 \item For a slightly different perspective on the splitting, recall the {\it bar complex}
 \begin{equation}\label{eq:barcompl}
 \cdots
 \xrightarrow{\quad \partial_2\quad}
 A^{\otimes 3}
 \xrightarrow{\quad \partial_1\quad}
 A^{\otimes 2}
 \xrightarrow{\quad \partial_0\quad}
 A\to 0
 \end{equation}
 of \cite[Definition 4.2.1]{ginz-ncg}, providing a free resolution of $A$ (in the category of $A$-bimodules). The complex \eqref{eq:hochcoc} can then be obtained by applying the functor ${}_A\Hom_{A}(-,E)$ (morphisms of $A$-bimodules) to the bar complex, after deleting the last term $A$ and identifying
 \begin{equation}\label{eq:cnavatars}
 C^n(A,E) = \Hom\big(A^{\otimes n},E\big)\cong {}_A\Hom_{A}\big(A^{\otimes (n+2)},E\big).
 \end{equation}
 Now, because $A$ is separable, \eqref{eq:barcompl} is {\it contractible} \cite[Definition following Theorem 6.14]{rot}: appropriately-chosen $A$-bimodule maps
 \begin{equation*}
 A^{\otimes(n+2)}\xrightarrow{\quad h_n\quad} A^{\otimes(n+3)},\qquad n\ge -1
 \end{equation*}
 (a {\it contracting homotopy}, per loc.cit.) provide decompositions
 \begin{equation*}
 A^{\otimes(n+2)} \cong \left(\im\partial_{n+1} = \ker\partial_n\right)\oplus \im\partial_n.
 \end{equation*}
 The corresponding decomposition of $C^n(A,E)$ has the kernel of \eqref{eq:ctobsurj-hoch} as one of its summands (taking the identification \eqref{eq:cnavatars} for granted), hence the splitting. As for the universal-bound claim, note that the splitting depends only on the choice of contracting homotopy.
 \end{enumerate}
\end{Remarks}

\subsection*{Acknowledgements}

This work is partially supported by NSF grant DMS-2001128.
We are grateful for valuable pointers to the literature from Harl-Hermann Neeb, as well as the anonymous referees for insightful comments and suggestions.

\pdfbookmark[1]{References}{ref}
\LastPageEnding


\begin{thebibliography}{10}
\footnotesize\itemsep=0pt

\bibitem{ar}
Ad\'amek J., Rosick\'y J., Locally presentable and accessible categories,
 \textit{London Math. Soc. Lecture Note Ser}, Vol. 189, \href{https://doi.org/10.1017/CBO9780511600579}{Cambridge University
 Press}, Cambridge, 1994.

\bibitem{aw-nonab}
An J., Wang Z., Nonabelian cohomology with coefficients in {L}ie groups,
 \href{https://doi.org/10.1090/S0002-9947-08-04278-5}{\textit{Trans. Amer. Math. Soc.}} \textbf{360} (2008), 3019--3040,
 \href{https://arxiv.org/abs/math.GR/0506625}{arXiv:math.GR/0506625}.

\bibitem{acs}
Andruchow E., Corach G., Stojanoff D., A geometric characterization of
 nuclearity and injectivity, \href{https://doi.org/10.1006/jfan.1995.1134}{\textit{J.~Funct. Anal.}} \textbf{133} (1995),
 474--494.

\bibitem{arv}
Arveson W., An invitation to {$C^*$}-algebras, \textit{Grad. Texts in Math.},
 Vol.~39, Springer, New York, 1976.

\bibitem{baird}
Baird T.J., Cohomology of the space of commuting {$n$}-tuples in a compact
 {L}ie group, \href{https://doi.org/10.2140/agt.2007.7.737}{\textit{Algebr. Geom. Topol.}} \textbf{7} (2007), 737--754,
 \href{https://arxiv.org/abs/math.AT/0610761}{arXiv:math.AT/0610761}.

\bibitem{bigo}
Black P., big-{O} notation, available at
 \url{https://www.nist.gov/dads/HTML/bigOnotation.html}.

\bibitem{bor-symsp}
Borel A., Semisimple groups and {R}iemannian symmetric spaces, \textit{Texts
 Read. Math.}, Vol.~16, \href{https://doi.org/10.1007/978-93-80250-92-2}{Hindustan Book Agency}, New Delhi, 1998.

\bibitem{bourb-vars-17}
Bourbaki N., \'El\'ements de math\'ematique. {F}asc.~{XXXIII}. {V}ari\'et\'es
 diff\'erentielles et analytiques. {F}ascicule de r\'esultats ({P}aragraphes
 1~\`a~7), \textit{Actualit\'es Sci. Indust.}, Vol. 1333, Hermann, Paris,
 1967.

\bibitem{bourb-vars-8-15}
Bourbaki N., \'El\'ements de math\'ematique. {F}asc.~{XXXVI}. {V}ari\'et\'es
 diff\'erentielles et analytiques. {F}ascicule de r\'esultats ({P}aragraphes
 8~\`a~15), \textit{Actualit\'es Sci. Indust.}, Vol. 1347, Hermann, Paris,
 1971.

\bibitem{bourb-lie-13}
Bourbaki N., Lie groups and {L}ie algebras: {C}hapters 1--3, Springer, Berlin,
 1989.

\bibitem{br-coh}
Brown K.S., Cohomology of groups, \textit{Grad. Texts in Math.}, Vol.~87,
 \href{https://doi.org/10.1007/978-1-4684-9327-6}{Springer}, New York, 1982.

\bibitem{cmz}
Caenepeel S., Militaru G., Zhu S., Frobenius and separable functors for
 generalized module categories and nonlinear equations, \textit{Lecture Notes
 in Math}, Vol. 1787, \href{https://doi.org/10.1007/b83849}{Springer}, Berlin, 2002.

\bibitem{cg-average}
Corach G., Gal\'e J.E., Averaging with virtual diagonals and geometry of
 representations, in Banach {A}lgebras 97, \href{https://doi.org/10.1515/9783110802009.87}{De Gruyter}, Berlin, 1998, 87--100.

\bibitem{cg-amen}
Corach G., Gal\'e J.E., On amenability and geometry of spaces of bounded
 representations, \href{https://doi.org/10.1112/S0024610798006759}{\textit{J.~Lond. Math. Soc.}} \textbf{59} (1999), 311--329.

\bibitem{dales}
Dales H.G., Banach algebras and automatic continuity, \textit{London Math. Soc.
 Monog. New Series}, Vol.~24, The Clarendon Press, Oxford University Press,
 New York, 2000.

\bibitem{dael}
Dales H.G., Aiena P., Eschmeier J., Laursen K., Willis G.A., Introduction to
 {B}anach algebras, operators, and harmonic analysis, \textit{London Math.
 Soc. Stud. Texts}, Vol.~57, \href{https://doi.org/10.1017/CBO9780511615429}{Cambridge University Press}, Cambridge, 2003.

\bibitem{3264}
Eisenbud D., Harris J., 3264 and all that. {A}~second course in algebraic
 geometry, \href{https://doi.org/10.1017/CBO9781139062046}{Cambridge University Press}, Cambridge, 2016.

\bibitem{ev-coh}
Evens L., The cohomology of groups, \textit{Oxford Math. Monog.}, The Clarendon Press,
 \href{https://doi.org/10.1093/oso/9780198535805.001.0001}{Oxford University Press}, New York, 1991.

\bibitem{fh}
Fulton W., Harris J., Representation theory. {A}~first course, \textit{Grad.
 Texts in Math.}, Vol. 129, \href{https://doi.org/10.1007/978-1-4612-0979-9}{Springer}, New York, 1991.

\bibitem{ginz-ncg}
Ginzburg V., Lectures on noncommutative geometry, \href{https://arxiv.org/abs/math.AG/0506603}{arXiv:math.AG/0506603}.

\bibitem{hm4}
Hofmann K.H., Morris S.A., The structure of compact groups. {A} primer for the
 student~-- a handbook for the expert, \textit{De Gruyter Stud. Math.},
 Vol.~25, \href{https://doi.org/10.1515/9783110695991}{De Gruyter}, Berlin, 2020.

\bibitem{iw}
Iwasawa K., On some types of topological groups, \href{https://doi.org/10.2307/1969548}{\textit{Ann. of Math.}}
 \textbf{50} (1949), 507--558.

\bibitem{john-coh}
Johnson B.E., Cohomology in {B}anach algebras, \textit{Mem. Amer. Math. Soc.},
 Vol. 127, \href{https://doi.org/10.1090/memo/0127}{American Mathematical Society}, Providence, RI, 1972.

\bibitem{lam}
Lam T.Y., A first course in noncommutative rings, \textit{Grad. Texts in
 Math.}, Vol. 131, \href{https://doi.org/10.1007/978-1-4419-8616-0}{Springer}, New York, 2001.

\bibitem{lang-fund}
Lang S., Fundamentals of differential geometry, \textit{Grad. Texts in Math.},
 Vol. 191, \href{https://doi.org/10.1007/978-1-4612-0541-8}{Springer}, New York, 1999.

\bibitem{ls}
Lawton S., Sikora A.S., Varieties of characters, \href{https://doi.org/10.1007/s10468-017-9679-y}{\textit{Algebr. Represent.
 Theory}} \textbf{20} (2017), 1133--1141, \href{https://arxiv.org/abs/1604.02164}{arXiv:1604.02164}.

\bibitem{mart-proj}
Martin M., Projective representations of compact groups in {$C^*$}-algebras, in
 Linear {O}perators in {F}unction {S}paces, \textit{Oper. Theory Adv. Appl.},
 Vol.~43, \href{https://doi.org/10.1007/978-3-0348-7250-8_17}{Birkh\"auser}, Basel, 1990, 237--253.

\bibitem{ms-charclass}
Milnor J.W., Stasheff J.D., Characteristic classes, \textit{Ann. of Math.
 Stud.}, Vol.~76, \href{https://doi.org/10.1515/9781400881826}{Princeton University Press}, Princeton, NJ, 1974.

\bibitem{mz-conj}
Montgomery D., Zippin L., A theorem on {L}ie groups, \href{https://doi.org/10.1090/S0002-9904-1942-07699-3}{\textit{Bull. Amer. Math.
 Soc.}} \textbf{48} (1942), 448--452.

\bibitem{moo-12}
Moore C.C., Extensions and low dimensional cohomology theory of locally compact
 groups.~{I}, \href{https://doi.org/10.2307/1994090}{\textit{Trans. Amer. Math. Soc.}} \textbf{113} (1964), 40--63.

\bibitem{fkm}
Mumford D., Fogarty J., Kirwan F., Geometric invariant theory, \textit{Ergeb.
 Math. Grenzgeb.~(2)}, Vol.~34, Springer, Berlin, 1994.

\bibitem{munk}
Munkres J.R., Topology, Prentice Hall, Inc., Upper Saddle River, NJ, 2000.

\bibitem{neeb-inf}
Neeb K.-H., Infinite-dimensional groups and their representations, in Lie
 {T}heory, \textit{Progr. Math.}, Vol.~228,\href{https://doi.org/10.1007/978-0-8176-8192-0_2}{ Birkh\"auser Boston}, Boston, MA,
 2004, 213--328.

\bibitem{neeb-lc}
Neeb K.-H., Towards a {L}ie theory of locally convex groups, \href{https://doi.org/10.1007/s11537-006-0606-y}{\textit{Jpn.~J.
 Math.}} \textbf{1} (2006), 291--468, \href{https://arxiv.org/abs/1501.06269}{arXiv:1501.06269}.

\bibitem{om-inflie-bk}
Omori H., Infinite-dimensional {L}ie groups, \textit{Transl. Math. Monogr.},
 Vol. 158, \href{https://doi.org/10.1090/mmono/158}{American Mathematical Society}, Providence, RI, 1997.

\bibitem{pierce}
Pierce R.S., Associative algebras, \textit{Grad. Texts in Math.}, Vol.~88,
 \href{https://doi.org/10.1007/978-1-4757-0163-0}{Springer}, New York, 1982.

\bibitem{poth}
Pothoven K., Projective and injective objects in the category of {B}anach
 spaces, \href{https://doi.org/10.2307/2037073}{\textit{Proc. Amer. Math. Soc.}} \textbf{22} (1969), 437--438.

\bibitem{ps-loop}
Pressley A., Segal G., Loop groups, \textit{Oxford Math. Monog.}, The Clarendon Press,
 Oxford University Press, New York, 1986.

\bibitem{rot}
Rotman J.J., An introduction to homological algebra, \textit{Universitext}, \href{https://doi.org/10.1007/b98977}{Springer},
 New York, 2009.

\bibitem{run-lec}
Runde V., Lectures on amenability, \textit{Lecture Notes in Math.}, Vol.~1774,
 \href{https://doi.org/10.1007/b82937}{Springer}, Berlin, 2002.

\bibitem{ryan}
Ryan R.A., Introduction to tensor products of {B}anach spaces, \textit{Springer Monogr.
 Math.}, \href{https://doi.org/10.1007/978-1-4471-3903-4}{Springer}, London, 2002.

\bibitem{ser-lf}
Serre J.-P., Local fields, \textit{Grad. Texts in Math.}, Vol.~67, Springer, New
 York, 1979.

\bibitem{ser-lag}
Serre J.-P., Lie algebras and {L}ie groups: 1964 lectures given at {H}arvard
 {U}niversity, \textit{Lecture Notes in Math.}, Vol.~1500, Springer, Berlin,
 1992.

\bibitem{ser-gal}
Serre J.-P., Galois cohomology, \href{https://doi.org/10.1007/978-3-642-59141-9}{Springer}, Berlin, 1997.

\bibitem{tak2}
Takesaki M., Theory of operator algebras~{II}, \textit{Encyclopaedia Math.
 Sci.}, Vol. 125, \href{https://doi.org/10.1007/978-3-662-10451-4}{Springer}, Berlin, 2003.

\bibitem{gs}
Torres~Giese E., Sjerve D., Fundamental groups of commuting elements in {L}ie
 groups, \href{https://doi.org/10.1112/blms/bdm094}{\textit{Bull. Lond. Math. Soc.}} \textbf{40} (2008), 65--76.

\bibitem{upm-ban}
Upmeier H., Symmetric {B}anach manifolds and {J}ordan {$C^\ast$}-algebras,
 \textit{North-Holland Math. Stud.}, Vol.~104, North-Holland Publishing Co.,
 Amsterdam, 1985.

\bibitem{wo}
Wegge-Olsen N.E., {$K$}-theory and {$C^*$}-algebras. {A}~friendly approach,
 \textit{Oxford Sci. Publ.}, The Clarendon Press, \href{https://doi.org/10.1093/oso/9780198596943.001.0001}{Oxford University Press}, New York,
 1993.

\bibitem{wil-tvs}
Wilansky A., Modern methods in topological vector spaces, McGraw-Hill
 International Book Co., New York, 1978.

\end{thebibliography}
\end{document}